\documentclass[11pt,reqno]{amsart}

\usepackage[T1]{fontenc}
\usepackage[utf8]{inputenc}
\usepackage{lmodern}
\usepackage{microtype}
\usepackage[margin=1.08in]{geometry}
\usepackage{amsmath,amssymb,amsfonts,mathtools}
\usepackage{bm}
\usepackage{booktabs}
\usepackage{array}
\usepackage{enumitem}
\usepackage{xcolor}
\usepackage{hyperref}
\usepackage{aliascnt}
\usepackage[nameinlink,noabbrev]{cleveref}
\hypersetup{hidelinks,pdfauthor={Yitzchak Shmalo},pdftitle={Mesoscopic Rectangular Spikes under Subspace Local Laws}}

\allowdisplaybreaks
\setlist[itemize]{leftmargin=2.1em,itemsep=0.25em,topsep=0.35em}
\setlist[enumerate]{leftmargin=2.2em,itemsep=0.25em,topsep=0.35em}

\newtheorem{theorem}{Theorem}[section]
\newaliascnt{proposition}{theorem}
\newtheorem{proposition}[proposition]{Proposition}
\aliascntresetthe{proposition}
\newaliascnt{lemma}{theorem}
\newtheorem{lemma}[lemma]{Lemma}
\aliascntresetthe{lemma}
\newaliascnt{corollary}{theorem}
\newtheorem{corollary}[corollary]{Corollary}
\aliascntresetthe{corollary}
\newaliascnt{assumption}{theorem}
\newtheorem{assumption}[assumption]{Assumption}
\aliascntresetthe{assumption}
\newaliascnt{definition}{theorem}
\newtheorem{definition}[definition]{Definition}
\aliascntresetthe{definition}
\newaliascnt{remark}{theorem}
\newtheorem{remark}[remark]{Remark}
\aliascntresetthe{remark}
\newaliascnt{example}{theorem}
\newtheorem{example}[example]{Example}
\aliascntresetthe{example}

\crefname{theorem}{Theorem}{Theorems}
\crefname{proposition}{Proposition}{Propositions}
\crefname{lemma}{Lemma}{Lemmas}
\crefname{corollary}{Corollary}{Corollaries}
\crefname{assumption}{Assumption}{Assumptions}
\crefname{definition}{Definition}{Definitions}
\crefname{remark}{Remark}{Remarks}
\crefname{example}{Example}{Examples}
\crefname{section}{Section}{Sections}
\crefname{appendix}{Appendix}{Appendices}
\Crefname{appendix}{Appendix}{Appendices}

\newcommand{\F}{\mathbb F}
\newcommand{\R}{\mathbb R}
\newcommand{\C}{\mathbb C}

\newcommand{\E}{\mathbb E}
\newcommand{\Pp}{\mathbb P}
\newcommand{\Id}{\mathrm I}
\newcommand{\op}{\mathrm{op}}
\newcommand{\tr}{\operatorname{tr}}
\newcommand{\rank}{\operatorname{rank}}
\newcommand{\spec}{\operatorname{spec}}
\newcommand{\dist}{\operatorname{dist}}
\newcommand{\diag}{\operatorname{diag}}
\newcommand{\supp}{\operatorname{supp}}

\newcommand{\St}{\operatorname{St}}
\newcommand{\vague}{\xrightarrow{\mathrm v}}
\newcommand{\weak}{\Rightarrow}
\newcommand{\one}{\mathbf 1}
\newcommand{\cL}{\kappa_{\mathrm L}}
\newcommand{\cR}{\kappa_{\mathrm R}}
\newcommand{\cLR}{\kappa_{\mathrm{LR}}}
\newcommand{\calX}{\mathcal X}
\newcommand{\calY}{\mathcal Y}
\newcommand{\calP}{\mathcal P}
\newcommand{\calW}{\mathcal W}
\newcommand{\calR}{\mathcal R}
\newcommand{\calG}{\mathcal G}
\newcommand{\calF}{\mathcal F}
\newcommand{\calH}{\mathcal H}
\newcommand{\calM}{\mathcal M}
\newcommand{\calT}{\mathcal T}
\newcommand{\calK}{\mathcal K}
\newcommand{\calE}{\mathcal E}
\newcommand{\eps}{\varepsilon}
\newcommand{\dd}{\,\mathrm d}
\newcommand{\ii}{\mathrm i}
\newcommand{\abs}[1]{\left|#1\right|}
\newcommand{\norm}[1]{\left\lVert#1\right\rVert}
\newcommand{\ip}[2]{\left\langle #1,#2\right\rangle}

\title[Mesoscopic rectangular spikes]{Mesoscopic Rectangular Spikes under Subspace Local Laws:\\Outlier Values and Singular Subspaces}
\author{Yitzchak Shmalo}
\thanks{Einstein Institute of Mathematics, Hebrew University of Jerusalem, Jerusalem, Israel.
\emph{Email}: \texttt{yitzchak.shmalo@gmail.com}.}
\subjclass[2020]{Primary 60B20; Secondary 15B52, 62H25}
\keywords{Random matrices, spiked models, BBP transition, singular vectors, local laws, Marchenko--Pastur law}
\date{}

\begin{document}

\begin{abstract}
A rectangular data matrix is often modelled as noise plus a signal of low rank.  When that rank is fixed the picture is classical: a signal direction must exceed a critical strength before it produces a singular value outside the noise bulk, and above the threshold the singular vectors of the observation retain a definite, computable fraction of the planted direction.  We ask what survives when the number of signal directions grows with the dimension.

Everything here rests on one hypothesis, which we call a subspace local law: seen from inside the signal subspace, the resolvent of the noise should look like a scalar.  We show that this hypothesis alone locates the outliers and counts them, and, in a holomorphic form, determines the outlier singular subspaces as well.  The conclusions are stated through spectral projectors rather than individual singular vectors, so they remain meaningful when spike strengths collide or come closer together than the error of the approximation, which at growing rank they must.  We then verify the hypothesis in four genuinely different settings, from deterministic noise viewed through randomly oriented signal directions to independent entries with fixed deterministic ones, and for Marchenko--Pastur noise we compute every constant explicitly.
\end{abstract}

\maketitle

\begingroup
\setcounter{tocdepth}{1}
\tableofcontents
\endgroup

\section{Introduction}\label{sec:intro}

\subsection{The problem}
Suppose you are handed a large rectangular matrix and told that it is mostly noise, but that a few real directions have been planted in it.  Two questions follow at once.  Can you tell from the matrix alone that anything was planted?  And if you can, how much of it do you recover?

For a single planted direction the answers have been known for two decades, and they are sharper than one might expect.  There is a critical strength.  Below it the largest singular value of the observation sits exactly where it would have sat with no signal at all, and the corresponding singular vector is asymptotically orthogonal to the direction that was planted; the signal is not merely hard to see, it is invisible.  Above the threshold an outlier separates from the bulk at a location determined by an explicit map, and the singular vectors keep a definite fraction of the planted direction---a fraction strictly less than one, no matter how much data one has.

This paper is about what happens when the number of planted directions is allowed to grow with the size of the matrix.  The setting is
\begin{equation}\label{eq:model-intro}
    W_n=X_n+P_n\in\F^{n\times m},
    \qquad
    \F\in\{\R,\C\},
    \quad n\le m,
    \quad n/m\to c\in(0,1],
\end{equation}
with $X_n$ noise and $P_n$ a signal of rank $r_n$.  Writing a thin singular-value decomposition
\[
    P_n=U_n\Theta_nV_n^*,\qquad
    \Theta_n=\diag(\theta_{n,1},\ldots,\theta_{n,r_n}),
\]
the questions above become: which strengths $\theta_{n,i}$ push a singular value of $W_n$ out of the bulk, where does it land, and how much do the left and right singular vectors of $W_n$ remember about the columns of $U_n$ and $V_n$---when $r_n$ is no longer fixed but tends to infinity?

At fixed rank all of this is finite-dimensional bookkeeping.  A rank-$r$ perturbation changes the characteristic function of the problem by a determinant of size $2r$, and the outlier condition is that this small determinant vanish.  For rectangular additive deformations that condition is the rectangular $D$-transform of Benaych-Georges and Nadakuditi~\cite{BGN2012}, and their residues give the singular-vector overlaps.  Growing rank breaks the bookkeeping in a specific way.  The determinant is now of size $2r_n\to\infty$, so it is a large matrix rather than a small one, and the entrywise convergence statements that suffice for fixed $r$ say nothing uniform about it.  A single badly behaved direction among $r_n$ of them can move an eigenvalue of that matrix, and one moved eigenvalue is one wrong outlier.

So the question becomes what to assume.  Convergence of the noise spectrum is clearly not enough: it says how the noise behaves on average over all directions, and we need to know how it behaves on the particular growing subspace where the signal lives.  The hypothesis we isolate asks exactly that, and nothing more: \emph{seen from inside the signal subspace, does the resolvent of the noise look like a scalar?}  Made precise on the $2r_n$-dimensional compression of the linearized resolvent, in operator norm and uniformly in the spectral parameter, this is what we call a subspace local law.

Hypotheses of this shape are not new in spirit, and it is worth being precise about the difference.  Isotropic and anisotropic local laws, as developed for sample covariance and Wigner matrices in~\cite{BloemendalEtAl2014,KnowlesYin2017}, control a resolvent bilinear form $\ip p{\calR(\zeta)q}$ for prescribed deterministic vectors $p$ and $q$; the bilinear-form estimates of Hachem--Loubaton--Najim--Vallet~\cite{HachemEtAl2013} do the same for information-plus-noise models.  These are statements about one direction, or one pair of directions, at a time.  What the growing-rank problem needs is a statement about a growing subspace all at once, in operator norm.  The two are related but genuinely different: passing from the first to the second costs a union bound over a net of a sphere whose dimension is itself growing, and whether that is affordable depends on how strong the one-vector tail is.  \Cref{sec:independent-entries} is precisely the work of making that passage, and the answer turns out to be that the strength of the tail, rather than anything about the geometry of the signal, is what limits the admissible rank.

Isolating the hypothesis this way makes the division of labor plain.  The subspace local law is the model-dependent random-matrix estimate; everything after it is a deterministic argument about a finite-dimensional analytic equation whose dimension is allowed to grow.  The tools in that second step---linearization, the matrix determinant lemma, inertia, contour integration---are standard, and we make no claim on them.  The work is in identifying the right growing-dimensional input, proving a transfer theorem that survives repeated and unresolved spikes, and verifying the input for noise that is not rotationally invariant.

\subsection{What is new}
The paper makes four additions to the existing theory.

\paragraph{1. Minimal local hypotheses.}
The outlier transfer theorem itself does not assume $r_n\to\infty$, $r_n=o(n)$, or independence between $X_n$ and $P_n$.  Those conditions are imposed only where they are actually used.  The rank condition $r_n=o(n)$ is needed to leave the global empirical law unchanged, and $r_n\to\infty$ is needed only when one wants a nontrivial empirical distribution of outliers.  Independence appears only in the concrete probabilistic verifications.  Uniform boundedness is used only when one asks for a single global matching statement covering every supercritical spike.  Local value and projector results allow arbitrarily large background spikes under the weighted error condition in \cref{ass:weighted-compatibility}.

\paragraph{2. Outlier values without mutual spike separation.}
On every compact superedge region, the actual outliers are uniformly close to their deterministic $D$-transform locations.  No lower bound is imposed on the gaps between distinct spike strengths.  This is useful when the signal has repeated singular values, a continuous mesoscopic profile, or clusters whose internal gaps are smaller than the error scale.  We formulate the result as a bottleneck comparison of multisets, rather than assigning conceptual meaning to a particular labeling inside an unresolved cluster.

\paragraph{3. Singular vectors and singular subspaces.}
A holomorphic subspace local law gives an operator-norm approximation for the compressed Riesz projector of every separated outlier window.  This is the invariant statement that remains meaningful when spike strengths are repeated or arbitrarily close.  For an isolated spike it reduces to the usual left and right cosine formulas.  For a repeated cluster it shows that the empirical left and right outlier subspaces are obtained from the signal subspace by a common asymptotic rotation and two deterministic shrinkage factors.  We also derive empirical measures weighted by the left and right squared overlaps and by their phase-invariant cross overlap.

\paragraph{4. Non-Haar independent-entry models at growing rank.}
Rotational invariance is only one route to a subspace local law.  We prove that an anisotropic local law, together with product logarithmic-Sobolev concentration, yields the required operator-norm compression for deterministic signal directions throughout $r_n=o(n)$.  This includes Gaussian entries but does not rely on Haar singular vectors.  If only an overwhelming-probability anisotropic local law is used, a direct net argument still yields deterministic signal rank $r_n=O(\log n)$.  Random signal orientations, by contrast, work for arbitrary noise matrices whose singular-value law and top edge converge, including deterministic or correlated noise.

\subsection{Relation with the literature}
The BBP transition originated in the spiked covariance model~\cite{BBP2005}.  The fixed-rank rectangular theory in~\cite{BGN2012} proves convergence of extreme singular values and the projections of their left and right singular vectors.  Its master equation is the starting point for our linearized formulation.  What changes here is that the master equation has dimension $2r_n\to\infty$, so convergence of finitely many bilinear forms must be replaced by an operator-norm subspace law.  Our residue computation recovers the fixed-rank overlap formulas, while the projector formulation remains stable under growing multiplicities.

The closest predecessor is Huang~\cite{Huang2018}, who showed that the finite-rank deformation arguments survive at rank $o(N)$ in the Hermitian world.  He treats both an additive perturbation of a Wigner matrix and a multiplicative one, and obtains outlier rigidity, empirical outlier laws, and eigenvector projections---all without imposing mutual separation among the perturbation eigenvalues.  The growing-dimensional monotonicity and counting philosophy behind our value proof is his; we say below exactly where we borrow it.

Huang's multiplicative model deserves a closer comparison than a citation, because it is already a statement about rectangular matrices in disguise.  A spiked sample covariance matrix $(\Id+P)^{1/2}XX^*(\Id+P)^{1/2}$ is the Gram matrix of the rectangle $(\Id+P)^{1/2}X$, so his eigenvalue conclusions are conclusions about that rectangle's singular values, and his eigenvector conclusions are conclusions about its left singular vectors.  Read that way, three differences with the present paper stand out.  The first is that a multiplicative deformation acts on one side: the right singular vectors carry no planted structure of their own, so there is one overlap to compute rather than two.  Here the signal is $U_n\Theta_nV_n^*$, and $U_n$ and $V_n$ are planted simultaneously and independently; the left and right overlaps are different functions of the same strength, and by \eqref{eq:MP-left-overlap}--\eqref{eq:MP-right-overlap} they coincide only in the square case $c=1$.  The second is the shape of the outlier equation.  A one-sided deformation is governed by a single scalar resolvent; the additive rectangular problem produces the product $d=\phi\widetilde\phi$ of two of them, one for each side of the rectangle, and it is that product---the rectangular $D$-transform---which must be inverted.  The third is what the contour integral returns.  Because the linearization is $2r_n$-dimensional with two blocks, one residue computation yields the left projector, the right projector, and the cross term between them at the same time, which is what makes a basis-invariant treatment of repeated clusters possible.

The mutual-separation point is worth stating against a specific alternative rather than in the abstract.  Liu--Liu--Pan--Zhang--Zhang~\cite{LiuEtAl2025}, who also work at growing rank, assume that the distinct spiked values stay separated from one another and cap the rank at $O(n^{1/3})$; Bao--Ding--Wang~\cite{BaoDingWang2021} work at fixed rank with simple spikes.  Such an assumption is natural when the argument identifies spikes one at a time, since two spikes closer than the error scale cannot be told apart.  We avoid it because our value proof never tries to identify anything.  It counts: the number of singular values of $W_n$ above a level is read off from the inertia of one Hermitian matrix, that count is compared with the count for a deterministic comparison matrix by Weyl's inequality, and the comparison is uniform in the spike index.  A cluster of coincident strengths contributes the right number of outliers to the count whether or not one can say which is which.  What that argument cannot do, and does not claim to do, is attach a label to an individual spike inside an unresolved cluster; we therefore state the conclusion as a bottleneck comparison of multisets, and recover individual vectors only under an explicit isolation hypothesis.

For information-plus-noise localization and exact separation, see Loubaton--Vallet~\cite{LoubatonVallet2011}, Chapon--Couillet--Hachem--Mestre~\cite{ChaponEtAl2014}, and Capitaine~\cite{Capitaine2014}.  Ding~\cite{Ding2020} obtains precise results for deformed rectangular matrices and denoising.  Beyond the separation hypothesis discussed above, the two growing-rank and fixed-rank comparisons differ from us in what they compute: \cite{BaoDingWang2021} obtains distributional limits for singular vectors and subspaces, a second-order question that our first-order statements do not address, and \cite{LiuEtAl2025} works in a heteroskedastic model whose noise is not covered by our scalar hypothesis.  The extensive-rank regime, where the signal changes the global law, has a different character; see, for example, Landau--Mel--Ganguli~\cite{LandauMelGanguli2023}.  Recent work of Forner--Maloney--Rosenow~\cite{FornerMaloneyRosenow2025} studies an extensive family of degenerate rectangular spikes and derives the resulting deformation of the Marchenko--Pastur bulk.  These extensive-rank theories are complementary to the present sublinear-rank setting, in which the global law remains unchanged.

The probabilistic verification for deterministic directions uses isotropic and anisotropic local-law technology for sample covariance matrices and their block linearizations.  In particular, the full block estimate in \cite[Eq.~(1.5) and Thms.~3.6--3.7]{KnowlesYin2017}, together with the outside-spectrum isotropic formulation in \cite[Thms.~2.4--2.5 and Rem.~2.6]{BloemendalEtAl2014}, gives the one-vector input used below after the elementary change of spectral variable $z=\zeta^2$ and a block rescaling.  Bilinear resolvent estimates tailored to information-plus-noise models were developed by Hachem--Loubaton--Najim--Vallet~\cite{HachemEtAl2013}.  Our logarithmic-Sobolev lifting argument then supplies exponentially small tails, and a net upgrades those one-vector estimates to an operator norm on an $o(n)$-dimensional signal space.

\subsection{Where the arguments come from}
Since almost every ingredient below has a source, it seems better to say so once, in one place, than to leave the reader to guess.

The linearized master equation of \cref{lem:master-equation} is the growing-rank form of the determinant identity that underlies every finite-rank spiked calculation; for the rectangular additive model it is the equation of~\cite{BGN2012}, written for a $2r_n$-dimensional compression instead of a fixed one.  The real-axis argument in \cref{sec:transfer}---turn the outlier question into an inertia count, control the count by monotonicity in the spectral parameter, and compare counts by Weyl's inequality---is adapted from Huang~\cite{Huang2018}, who used it for Hermitian perturbations of growing rank; our contribution to it is the weighted square-root comparison of \cref{lem:weighted-square-root}, which is what allows the spike strengths to diverge.  The contour representation of the singular subspaces is the classical Riesz-projector argument, used at fixed rank in~\cite{BGN2012} and in the information-plus-noise literature~\cite{LoubatonVallet2011,ChaponEtAl2014,Capitaine2014}; what is new is running it with a homotopy so that the enclosed-root count is preserved when the equation has growing dimension, and reading all three compressed projectors off the same residue.

On the probabilistic side we borrow rather than build.  The one-vector input in \cref{sec:independent-entries} is the anisotropic local law of Knowles--Yin~\cite{KnowlesYin2017}, with the outside-spectrum isotropic statements of Bloemendal--Erd\H{o}s--Knowles--Yau--Yin~\cite{BloemendalEtAl2014} for the diagonal blocks; the change of spectral variable relating their linearization to ours is elementary and is carried out where it is used.  The lifting from one vector to a growing subspace uses tensorization of the logarithmic-Sobolev inequality and the Herbst argument in the form given by Ledoux~\cite{Ledoux2001}, together with the Bai--Yin edge theorem~\cite{YinBaiKrishnaiah1988}.  The concentration estimates for Haar Stiefel frames in \cref{app:stiefel} are standard sphere and net arguments of the kind collected in Vershynin~\cite{Vershynin2018}.  The smoothing step in \cref{thm:empirical-laws} is the Helffer--Sj\"ostrand functional calculus~\cite{DimassiSjostrand1999}.

\subsection{Organization}
\Cref{sec:setup} introduces the scalar $D$-transform and the real and holomorphic subspace local laws.  The main value, projector, and empirical-measure results are stated in \cref{sec:main}.  Their deterministic proofs occupy \cref{sec:transfer}.  Random signal orientations are treated in \cref{sec:random-orientation}; rotationally invariant noise in \cref{sec:haar-noise}; and independent-entry noise with deterministic directions in \cref{sec:independent-entries}.  The Marchenko--Pastur formulas and a collection of examples appear in \cref{sec:MP}.  Technical concentration estimates are collected in the appendices.

\section{Setup and minimal hypotheses}\label{sec:setup}

\subsection{Signal, linearization, and empirical measures}
For a Hermitian $N\times N$ matrix $A$, write
\[
    \mu_A:=\frac1N\sum_{j=1}^N\delta_{\lambda_j(A)}
\]
for its empirical spectral measure, and write $F_A(x):=\mu_A(({-}\infty,x])$ for its cumulative distribution function.  We keep these two notations separate throughout.

Let
\begin{equation}\label{eq:signal-svd}
    P_n=U_n\Theta_nV_n^*,
    \qquad U_n\in\St(n,r_n),\quad V_n\in\St(m,r_n),
\end{equation}
where $\St(N,r):=\{Q\in\F^{N\times r}:Q^*Q=\Id_r\}$ and
\[
    \Theta_n=\diag(\theta_{n,1},\ldots,\theta_{n,r_n}),
    \qquad \theta_{n,1}\ge\cdots\ge\theta_{n,r_n}>0.
\]
At this point $r_n$ may be fixed, mesoscopic, or even comparable to $n$.  No independence between $P_n$ and $X_n$ is assumed in the deterministic transfer statements.

Introduce the Hermitian linearizations
\begin{equation}\label{eq:linearizations}
    \calX_n=\begin{pmatrix}0&X_n\\X_n^*&0\end{pmatrix},\qquad
    \calP_n=\begin{pmatrix}0&P_n\\P_n^*&0\end{pmatrix},\qquad
    \calW_n=\calX_n+\calP_n.
\end{equation}
Set
\begin{equation}\label{eq:E-K}
    \calE_n=\begin{pmatrix}U_n&0\\0&V_n\end{pmatrix},
    \qquad
    \calK_n=\begin{pmatrix}0&\Theta_n\\\Theta_n&0\end{pmatrix}.
\end{equation}
Then $\calP_n=\calE_n\calK_n\calE_n^*$ and $\calE_n^*\calE_n=\Id_{2r_n}$.  The positive eigenvalues of $\calW_n$ are exactly the positive singular values of $W_n$.

\subsection{The scalar noise law and the rectangular outlier map}
We use the following global assumption only to identify a deterministic edge and a scalar outlier map.

\begin{assumption}[Global noise law]\label{ass:global-noise}
As $n,m\to\infty$ with $n/m\to c\in(0,1]$, there is a compactly supported probability measure $\mu$ on $[0,\infty)$ such that
\begin{equation}\label{eq:global-law}
    \mu_{X_nX_n^*}\weak\mu,
    \qquad \norm{X_n}_{\op}\longrightarrow b:=\sqrt{\sup\supp\mu}
\end{equation}
in probability.
\end{assumption}

For $\zeta\in\C\setminus[-b,b]$, define
\begin{equation}\label{eq:phi-def}
    \phi(\zeta):=\int\frac{\zeta}{\zeta^2-x}\,\mu(\dd x),
    \qquad
    \widetilde\phi(\zeta):=c\phi(\zeta)+\frac{1-c}{\zeta},
    \qquad d(\zeta):=\phi(\zeta)\widetilde\phi(\zeta).
\end{equation}
For real $z>b$, the functions $\phi(z)$, $\widetilde\phi(z)$, and $d(z)$ are positive and strictly decreasing.  We impose the edge condition only for statements involving the BBP threshold.

\begin{assumption}[Finite nonzero edge transform]\label{ass:edge}
The one-sided limit satisfies
\[
    d(b+):=\lim_{z\downarrow b}d(z)\in(0,\infty).
\]
\end{assumption}

Define
\begin{equation}\label{eq:threshold-map}
    \theta_\star:=d(b+)^{-1/2},
    \qquad
    \rho(\theta):=
    \begin{cases}
       d^{-1}(\theta^{-2}),&\theta>\theta_\star,\\
       b,&0\le\theta\le\theta_\star,
    \end{cases}
    \qquad \Lambda(\theta):=\rho(\theta)^2.
\end{equation}
The map $\rho$ is continuous and strictly increasing on $(\theta_\star,\infty)$.

For a supercritical strength $\theta>\theta_\star$, let $\rho=\rho(\theta)$ and define
\begin{equation}\label{eq:overlap-weights}
    \cL(\theta):=-\frac{2\phi(\rho)}{\theta^2d'(\rho)},
    \qquad
    \cR(\theta):=-\frac{2\widetilde\phi(\rho)}{\theta^2d'(\rho)},
    \qquad
    \cLR(\theta):=-\frac{2}{\theta^3d'(\rho)}.
\end{equation}
Since $d'(\rho)<0$ and $d(\rho)=\theta^{-2}$,
\begin{equation}\label{eq:cross-weight}
    \cLR(\theta)=\sqrt{\cL(\theta)\cR(\theta)}.
\end{equation}
We set all three weights equal to zero for $\theta\le\theta_\star$.  Their interpretation as squared cosines and a cross overlap follows from \cref{thm:projectors,cor:isolated-vector}.

\subsection{Subspace local laws}
For $\zeta\notin\spec(\calX_n)$, write
\begin{equation}\label{eq:compressed-resolvent}
    \calR_{X,n}(\zeta):=(\zeta\Id-\calX_n)^{-1},
    \qquad
    \calG_n(\zeta):=\calE_n^*\calR_{X,n}(\zeta)\calE_n.
\end{equation}
The scalar deterministic equivalent is
\begin{equation}\label{eq:M-def}
    \calM(\zeta):=
    \begin{pmatrix}
       \phi(\zeta)\Id_{r_n}&0\\
       0&\widetilde\phi(\zeta)\Id_{r_n}
    \end{pmatrix}.
\end{equation}
The terminology ``local law'' here refers to localization in spectral parameter and in deterministic directions, not necessarily to microscopic spectral scales.  All of our contours stay a fixed positive distance from the limiting noise spectrum.

\begin{assumption}[Real subspace local law]\label{ass:real-SLL}
For every compact interval $I\Subset(b,\infty)$, there is a deterministic $\eps_n(I)\downarrow0$ such that
\begin{equation}\label{eq:real-SLL}
    \Pp\left(
       \norm{X_n}_{\op}<\inf I,
       \quad
       \sup_{z\in I}\norm{\calG_n(z)-\calM(z)}_{\op}\le\eps_n(I)
    \right)\longrightarrow1.
\end{equation}
\end{assumption}

\begin{assumption}[Holomorphic subspace local law]\label{ass:holo-SLL}
For every compact $K\Subset\C\setminus[-b,b]$, there is a deterministic $\eps_n(K)\downarrow0$ such that, with probability tending to one,
\begin{equation}\label{eq:holo-SLL}
    \sup_{\zeta\in K}\norm{\calG_n(\zeta)-\calM(\zeta)}_{\op}\le\eps_n(K).
\end{equation}
\end{assumption}

The holomorphic law implies the real law by restriction to compact intervals outside the edge.  In concrete models it is usually no harder to prove, because the resolvent is smoother away from the real spectrum.

The next compatibility condition is the minimal way to allow unbounded spike strengths.  It can be ignored when $\norm{\Theta_n}_{\op}=O(1)$.

\begin{assumption}[Deformation-weighted compatibility]\label{ass:weighted-compatibility}
On every compact set $K$ used in a theorem, in addition to \eqref{eq:holo-SLL} or \eqref{eq:real-SLL},
\begin{equation}\label{eq:weighted-error}
 \sup_{\zeta\in K}
 \left(
   \norm{(\calG_n(\zeta)-\calM(\zeta))\calK_n}_{\op}
   +\norm{\calK_n(\calG_n(\zeta)-\calM(\zeta))}_{\op}
 \right)\longrightarrow0
\end{equation}
in probability.
\end{assumption}

\begin{remark}[Why this is weaker than bounded spikes]\label{rem:weighted}
If $\norm{\Theta_n}_{\op}\eps_n(K)\to0$, then \cref{ass:weighted-compatibility} follows immediately.  Thus bounded spikes are sufficient, but slowly diverging spikes are also allowed whenever their size is smaller than the reciprocal local-law error.  The weighted formulation is the quantity that actually enters the resolvent equation; no global bound on signal strengths is needed for a local contour theorem.
\end{remark}

\subsection{A map of the examples}
The following table summarizes the verifications proved later.  ``Arbitrary noise'' means that the noise may even be deterministic, provided \cref{ass:global-noise} holds.  The listed rank restrictions are sufficient conditions for the local-law error to vanish; the deterministic transfer theorem itself has no rank restriction.

\begin{table}[ht]
\centering
\small
\renewcommand{\arraystretch}{1.25}
\begin{tabular}{>{\raggedright\arraybackslash}p{0.27\textwidth}>{\raggedright\arraybackslash}p{0.29\textwidth}>{\raggedright\arraybackslash}p{0.15\textwidth}>{\raggedright\arraybackslash}p{0.11\textwidth}}
\toprule
Noise / signal geometry & Main probabilistic input & Allowed rank & Result \\
\midrule
Arbitrary noise; independent Haar left and right signal frames & Stiefel compression concentration & $r_n=o(n)$ & Thm.~\ref{thm:random-orientation} \\
Bi-unitarily invariant noise; arbitrary independent signal frames & Haar singular-vector concentration & $r_n=o(n)$ & Thm.~\ref{thm:haar-noise} \\
Rectangular Gaussian noise; arbitrary independent signal frames & Previous row, or LSI lifting & $r_n=o(n)$ & Cor.~\ref{cor:gaussian-general} \\
Independent entries with a uniform logarithmic-Sobolev inequality; deterministic or independent signal frames & Anisotropic law plus exponential concentration & $r_n=o(n)$ & Thm.~\ref{thm:lsi-entry} \\
Independent entries under a standard overwhelming-probability anisotropic law; deterministic or independent signal frames & Net union bound & $r_n=O(\log n)$ & Cor.~\ref{cor:poly-local-law} \\
Any ensemble with an exponential-tail anisotropic law on the signal space & Direct net argument & $r_n=o(n)$ & Prop.~\ref{prop:exp-tail-gateway} \\
\bottomrule
\end{tabular}
\caption{Concrete routes to the subspace local law.}
\label{tab:examples}
\end{table}

\section{Main results}\label{sec:main}

\subsection{The global empirical law}
The first statement is independent of every local-law assumption.

\begin{theorem}[Sublinear rank does not change the global law]\label{thm:global-law}
Assume \cref{ass:global-noise} and $r_n=o(n)$.  Then
\begin{equation}\label{eq:rank-cdf}
  \norm{F_{W_nW_n^*}-F_{X_nX_n^*}}_\infty\le \frac{2r_n}{n}\longrightarrow0.
\end{equation}
Consequently,
\[
   \mu_{W_nW_n^*}\weak\mu
\]
in probability.  Likewise,
\[
   \mu_{W_n^*W_n}\weak c\mu+(1-c)\delta_0.
\]
\end{theorem}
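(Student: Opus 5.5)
The plan is to prove the rank inequality \eqref{eq:rank-cdf} deterministically and then read off the weak convergence statements from it. The starting point is that $W_nW_n^*-X_nX_n^*$ has small rank. Expanding,
\[
W_nW_n^*-X_nX_n^*=X_nP_n^*+P_nX_n^*+P_nP_n^*.
\]
The column space of this matrix is contained in $\operatorname{ran}(U_n)+\operatorname{ran}(X_nV_n)$. Hence it is Hermitian of rank at most $2r_n$. An alternative route avoids the product expansion altogether: $W_n$ and $X_n$ differ by $P_n$, which has rank $r_n$, so their singular values interlace with shift $r_n$, i.e.\ $\sigma_{j+r_n}(W_n)\le\sigma_j(X_n)$ and symmetrically. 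I will use whichever gives the stated constant $2r_n$ most directly; the Gram-matrix route gives exactly $2r_n$.

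The key lemma is the standard rank inequality for Hermitian matrices: if $A,B$ are $n\times n$ Hermitian with $\rank(A-B)\le k$, then $\norm{F_A-F_B}_\infty\le k/n$. I prove it by Weyl/Cauchy interlacing. Write $A-B=D$ with $\rank D\le k$. For any $x$, let $S$ be the span of eigenvectors of $B$ with eigenvalues $\le x$, so $\dim S=nF_B(x)$. On the subspace $S\cap\ker D$, which has dimension at least $nF_B(x)-k$, the quadratic form of $A$ coincides with that of $B$ and is therefore $\le x$. By the min–max principle, $A$ has at least $nF_B(x)-k$ eigenvalues $\le x$, so $F_A(x)\ge F_B(x)-k/n$. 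The reverse inequality follows by symmetry. Applying this with $k=2r_n$ gives \eqref{eq:rank-cdf}, and $r_n=o(n)$ makes the bound tend to zero.

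For the consequences, uniform closeness of the distribution functions implies that the Lévy distance between $\mu_{W_nW_n^*}$ and $\mu_{X_nX_n^*}$ tends to zero. Because the bound is deterministic, this holds surely. Combined with $\mu_{X_nX_n^*}\weak\mu$ in probability from \cref{ass:global-noise}, this yields $\mu_{W_nW_n^*}\weak\mu$ in probability. Concretely, for a bounded Lipschitz or continuous test function the difference of integrals is controlled by the Lévy distance. For the $m\times m$ Gram matrix I use that $W_n^*W_n$ has the same nonzero eigenvalues as $W_nW_n^*$ together with $m-n$ extra zeros. This gives
\[
\mu_{W_n^*W_n}=\tfrac nm\,\mu_{W_nW_n^*}+\bigl(1-\tfrac nm\bigr)\delta_0,
\]
and $n/m\to c$ finishes the argument. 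No step is a real obstacle. The only point needing care is the rank count giving $2r_n$ rather than $3r_n$ for the Gram difference, and the column-space containment above settles it.
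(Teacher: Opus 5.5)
Your proposal is correct and follows the paper's proof. The paper bounds $\rank(W_nW_n^*-X_nX_n^*)\le 2r_n$ by writing the difference as $P_nW_n^*+X_nP_n^*$, which is equivalent to your column-space containment; it then applies the same min--max rank inequality for distribution functions and obtains the $W_n^*W_n$ statement from the $m-n$ extra zero eigenvalues, and your explicit argument for the rank inequality simply fills in the step the paper calls standard.
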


\subsection{Uniform outlier locations without mutual spacing}
For convenience, set $\sigma_{n+1}(W_n):=0$.  For finite multisets $A=\{a_1\ge\cdots\ge a_k\}$ and $B=\{b_1\ge\cdots\ge b_k\}$ of equal size, write
\[
    d_{\mathrm{bot}}(A,B):=\max_{1\le j\le k}|a_j-b_j|
\]
for their one-dimensional bottleneck distance.  This notation records the optimal monotone pairing but does not assign an intrinsic label inside a cluster.

\begin{theorem}[Uniform local matching without mutual spacing]\label{thm:value-matching}
Assume \cref{ass:global-noise,ass:edge,ass:real-SLL}.  Fix $\delta>0$ and $M>\theta_\star+\delta$, and assume \cref{ass:weighted-compatibility} on a compact interval containing a fixed neighborhood of $\rho([\theta_\star+\delta,M])$.  Define
\[
  I_{n,\delta,M}:=\{i:\theta_\star+\delta\le\theta_{n,i}\le M\},
  \qquad
  k_{n,\delta}:=\#\{i:\theta_{n,i}\ge\theta_\star+\delta\}.
\]
There is a deterministic $\eta_n(\delta,M)\downarrow0$ such that, with probability $1-o(1)$,
\begin{equation}\label{eq:index-location}
  \max_{i\in I_{n,\delta,M}}
  \abs{\sigma_i(W_n)-\rho(\theta_{n,i})}
  \le\eta_n(\delta,M),
\end{equation}
where the maximum over the empty set is zero.  In addition, assuming weighted compatibility on a neighborhood of $\rho(\theta_\star+\delta)$,
\begin{equation}\label{eq:no-extra}
  \sigma_{k_{n,\delta}+1}(W_n)
  \le \rho(\theta_\star+\delta)+\eta_n(\delta,M).
\end{equation}
No upper bound is imposed on spikes outside $[\theta_\star+\delta,M]$.  In particular, stronger spikes may diverge, provided the weighted local-law error still vanishes.  If $\norm{\Theta_n}_{\op}\le M$, then \eqref{eq:index-location} matches every $\delta$-supercritical outlier and is equivalently a bottleneck bound for the two complete supercritical multisets.  No separation is required among their strengths.
\end{theorem}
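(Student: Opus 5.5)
Our plan is the counting argument outlined in the introduction, carried out on the real axis. For real $z>\norm{X_n}_{\op}$, the matrix determinant lemma applied to $\calW_n=\calX_n+\calE_n\calK_n\calE_n^*$ shows that $z$ is an eigenvalue of $\calW_n$ exactly when $\det(\Id_{2r_n}-\calK_n\calG_n(z))=0$ (the master equation). We will need the count version of this, not just the zero set. For $z>\norm{X_n}_{\op}$ the resolvent $\calR_{X,n}(z)$ is positive definite, and $z\Id-\calW_n=(z\Id-\calX_n)-\calE_n\calK_n\calE_n^*$. A Schur-complement and inertia computation, or the Huang monotonicity argument, then identifies
\[
\#\{j:\lambda_j(\calW_n)>z\}
\]
with the number of negative eigenvalues of a Hermitian $2r_n\times2r_n$ matrix. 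Because $\calK_n$ is indefinite with spectrum $\pm\theta_{n,i}$, we use the weighted form
\[
H_n(z):=|\calK_n|^{-1/2}\calK_n|\calK_n|^{-1/2}-|\calK_n|^{1/2}\calG_n(z)|\calK_n|^{1/2}.
\]
This is the place for the weighted square-root comparison of \cref{lem:weighted-square-root}. Sylvester's law then gives $\#\{\sigma_j(W_n)>z\}=\#\{\text{positive eigenvalues of } |\calK_n|^{1/2}\calG_n(z)|\calK_n|^{1/2}-\mathrm{sgn}(\calK_n)\text{ exceeding }0\}$, up to the precise normalization.

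The deterministic comparison matrix is obtained by replacing $\calG_n$ with $\calM$. Since $\calM(z)$ is diagonal with blocks $\phi(z)\Id$ and $\widetilde\phi(z)\Id$, the comparison matrix splits into $r_n$ two-by-two blocks, one per spike. The $i$-th block has a positive eigenvalue exactly when $\theta_{n,i}^2\phi(z)\widetilde\phi(z)>1$, that is, when $d(z)>\theta_{n,i}^{-2}$, which is equivalent to $\rho(\theta_{n,i})>z$ because $d$ is decreasing. So the deterministic count at level $z$ is $\#\{i:\rho(\theta_{n,i})>z\}$. We must also control how far the relevant eigenvalue of each block sits from $0$. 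Monotonicity of $d$ on compact subsets of $(b,\infty)$, together with $d'<0$ there, gives a quantitative margin: if $|\rho(\theta_{n,i})-z|\ge\eta$, then that block's eigenvalue lies at distance at least $c(\eta)>0$ from $0$. For spikes with $\theta_{n,i}>M$ or $\theta_{n,i}<\theta_\star+\delta$ we need uniform margins with no upper bound on $\theta$. We plan to get these by normalizing with $|\calK_n|^{-1/2}$, so that large spikes give blocks near $\mathrm{sgn}$-type matrices with eigenvalues bounded away from $0$ whenever $z$ lies in a compact interval.

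Weyl's inequality then compares the random and deterministic matrices. Their difference is $|\calK_n|^{1/2}(\calG_n-\calM)|\calK_n|^{1/2}$. Its operator norm is bounded by the geometric mean of the two one-sided weighted errors in \eqref{eq:weighted-error}, using an interpolation or Heinz-type inequality, and so it tends to $0$ uniformly on the interval by \cref{ass:real-SLL,ass:weighted-compatibility}. Call the bound $\epsilon_n$. Wherever the deterministic spectrum has no eigenvalue within $\epsilon_n$ of $0$, the two counts agree. This yields, for every $z$ in the compact interval with $\min_i|\rho(\theta_{n,i})-z|\ge\eta$,
\[
\#\{j:\sigma_j(W_n)>z\}=\#\{i:\rho(\theta_{n,i})>z\}.
\]
For \eqref{eq:no-extra} we take $z=\rho(\theta_\star+\delta)+\eta$, which gives count $\le k_{n,\delta}$. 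For \eqref{eq:index-location} the counts are read as two nonincreasing step functions that agree off $\eta$-neighborhoods of the deterministic points. By a standard monotone-rearrangement lemma, the $i$-th largest values then differ by at most about $2\eta$, the index $i$ being the same on both sides because the counts agree above the window. We choose $\eta=\eta_n\downarrow0$ slowly enough that $c(\eta_n)>\epsilon_n$ with probability $1-o(1)$, and use a fixed finite grid in $z$ combined with monotonicity to pass from pointwise to uniform statements. If $\norm{\Theta_n}_{\op}\le M$, this is exactly the bottleneck bound.

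We expect the main obstacle to be the inertia identity with the weighting for unbounded spikes. It must hold uniformly in $z$ in the interval, and the margins $c(\eta)$ must stay uniform over $\theta\in(0,\infty)$ away from the window, in particular for $\theta\to\infty$ and $\theta\to0$. Our first attempt will be to compute the explicit $2\times2$ block eigenvalues in closed form after the $|\calK_n|^{\pm1/2}$ normalization and check the limits $\theta\to0,\infty$ directly.
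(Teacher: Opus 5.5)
Your inertia step and your Weyl input are sound, and they are a legitimate alternative to the paper's normalization. By Haynsworth, $\calK_n^{-1}-\calG_n(z)$ is congruent to $\operatorname{sgn}(\calK_n)-|\calK_n|^{1/2}\calG_n(z)|\calK_n|^{1/2}$. The count $\#\{j:\sigma_j(W_n)>z\}$ is its number of negative eigenvalues minus $r_n$. (Each scalar $2\times2$ block always has one eigenvalue $\le-1$; it is the \emph{other} eigenvalue whose sign is that of $1-\theta_{n,i}^2d(z)$.) The bound $\||\calK_n|^{1/2}E|\calK_n|^{1/2}\|_{\op}\le\|\calK_nE\|_{\op}^{1/2}\|E\calK_n\|_{\op}^{1/2}$ follows from equality of the spectral radii of $ST$ and $TS$. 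This replaces $\calG_n^{1/2}\calK_n\calG_n^{1/2}$ and \cref{lem:weighted-square-root}.

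The gap is in the passage from matrices to singular values. You extract only count \emph{equality}, and only at levels $z$ with $\min_i|\rho(\theta_{n,i})-z|\ge\eta$ over \emph{all} spikes. You then appeal to a monotone-rearrangement lemma to get $|\sigma_i(W_n)-\rho(\theta_{n,i})|\lesssim2\eta$. Without separation that lemma is false. Suppose the $\eta$-neighbourhoods of the locations chain into a connected set of length $L\gg\eta$. Agreement of the counts off that set only says it contains the right number of singular values; they may sit anywhere inside it, so the error is of order $L$. This is exactly the regime the theorem targets. Since no spacing is assumed, the locations may be spaced, or coincide, on a scale finer than $\eta_n$ across a fixed interval (e.g.\ a continuous profile as in \cref{ex:continuous-profile} with $r_n\eta_n\to\infty$). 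Then the $\eta_n$-neighbourhoods cover that whole interval, and no level there makes your equality available. The same defect affects your proof of \eqref{eq:no-extra}, since a spike just above $\theta_\star+\delta$ puts a location within $\eta$ of $\rho(\theta_\star+\delta)+\eta$. It also affects your claim that counts agree above the window, since spikes just above $M$ may be dense there. Separately, a fixed finite $z$-grid cannot resolve $\eta_n\downarrow0$; you do not need one, because the comparison error is uniform on $I$.

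The missing idea is to use Weyl one-sidedly at every level, with no condition on the other blocks. Your signed margin, applied block by block, gives for every $z\in I$ (once $c(\eta)>\epsilon_n$)
\[
 \#\{i:\rho(\theta_{n,i})\ge z+\eta\}\le\#\{j:\sigma_j(W_n)>z\}\le\#\{i:\rho(\theta_{n,i})>z-\eta\}.
\]
The locations are ordered like the strengths. So $z=\rho(\theta_{n,i})-2\eta$ yields at least $i$ singular values above $z$, and $z=\rho(\theta_{n,i})+2\eta$ yields at most $i-1$. Hence $|\sigma_i(W_n)-\rho(\theta_{n,i})|\le2\eta$ regardless of how the other locations are spaced. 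The right-hand inequality at $z=\rho(\theta_\star+\delta)+\eta$ gives \eqref{eq:no-extra}.

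This is what the paper does in its own normalization. The $i$-th ordered eigenvalue of $\calH_n^{(0)}(z)$ is $g_{n,i}(z)$. \Cref{lem:uniform-slope} places it above $1+q_n$ at $\rho_{n,i}-Lq_n$ and below $1-q_n$ at $\rho_{n,i}+Lq_n$. Weyl then transfers these two one-sided facts to $\calH_n(z)$.
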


\begin{remark}[Quantitative rate]\label{rem:value-rate}
Let $\bar q_n\downarrow0$ be a deterministic high-probability envelope for the random comparison error $q_n$ in \eqref{eq:H-close}.  The proof gives
\[
   \eta_n(\delta,M)\le C_{\delta,M}\bar q_n.
\]
For globally bounded spikes and an unweighted subspace-law error $\eps_n$, one may take $q_n\le C_M\eps_n$.  If the compression is first centered at finite-$n$ normalized traces, the uniform scalar trace error is added to this bound.
\end{remark}

\begin{corollary}[Exact separation at fixed levels and windows]\label{cor:exact-separation}
Assume \cref{ass:global-noise,ass:edge,ass:real-SLL} and weighted compatibility on a neighborhood of a fixed $z_0>b$.  Set $\vartheta_0=d(z_0)^{-1/2}$.  If, for some fixed $\gamma>0$,
\[
   \min_{1\le i\le r_n}|\theta_{n,i}-\vartheta_0|\ge\gamma
\]
with probability tending to one, then
\begin{equation}\label{eq:exact-level}
  \#\{j:\sigma_j(W_n)>z_0\}
  =\#\{i:\theta_{n,i}>\vartheta_0\}
\end{equation}
with probability tending to one.  No rank restriction and no upper bound on the spike strengths are needed.  Subtracting the identity at the endpoints gives exact counts on any fixed finite union of superedge intervals whose inverse endpoints stay a fixed distance from the spike strengths.
\end{corollary}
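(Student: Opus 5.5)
The plan is to obtain \eqref{eq:exact-level} from the counting identity behind \cref{thm:value-matching}, but run at the single fixed level $z_0$ and without any matching. First, on the event of \cref{ass:real-SLL} for a small interval $I\ni z_0$, we have $\norm{X_n}_{\op}<z_0$. Then $z_0\Id-\calX_n$ is invertible, and the matrix determinant lemma (\cref{lem:master-equation}) applies:
\[
\det(z\Id-\calW_n)=\det(z\Id-\calX_n)\det\bigl(\Id-\calG_n(z)\calK_n\bigr).
\]
We count positive eigenvalues of $\calW_n$ above $z_0$ by inertia. By Sylvester's law, or equivalently Haynsworth additivity applied to the bordered matrix $\begin{pmatrix} z_0\Id-\calX_n & \calE_n\\ \calE_n^* & \calK_n^{-1}\end{pmatrix}$, the number of eigenvalues of $\calW_n$ exceeding $z_0$ equals the number of negative eigenvalues of the $2r_n\times 2r_n$ Hermitian matrix
\[
H_n(z_0)=\calK_n^{-1}-\calG_n(z_0).
\]
Here we use that $z_0\Id-\calX_n$ is positive definite. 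Because $z_0>b\ge0$, the eigenvalues of $\calW_n$ above $z_0$ are exactly the singular values $\sigma_j(W_n)>z_0$. To avoid inverting $\calK_n$ when spikes are large, I would instead work with the symmetrized form
\[
|\calK_n|^{1/2}\,\bigl(\operatorname{sgn}\calK_n-|\calK_n|^{1/2}\calG_n(z_0)|\calK_n|^{1/2}\bigr)\,|\calK_n|^{1/2}.
\]
This is the weighted square-root comparison of \cref{lem:weighted-square-root}, and it has the same inertia as $H_n(z_0)$.

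Next comes the deterministic comparison. Replace $\calG_n$ by $\calM$, giving
\[
H_n^0=\operatorname{sgn}\calK_n-|\calK_n|^{1/2}\calM(z_0)|\calK_n|^{1/2}.
\]
This matrix splits into $2\times2$ blocks indexed by $i$, namely $\begin{pmatrix}-\theta_i\phi&1\\1&-\theta_i\widetilde\phi\end{pmatrix}$ evaluated at $z_0$. Each block has determinant $\theta_i^2 d(z_0)-1$ and negative trace. Hence a block has exactly one negative eigenvalue when $\theta_i>\vartheta_0$ and none when $\theta_i<\vartheta_0$. So the negative-eigenvalue count of $H_n^0$ is $\#\{i:\theta_{n,i}>\vartheta_0\}$.

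The gap hypothesis $|\theta_{n,i}-\vartheta_0|\ge\gamma$ is what gives a spectral gap for every eigenvalue of $H_n^0$. The main obstacle is making this gap uniform in $i$ with no upper bound on $\theta_i$. Per block, the eigenvalue of smallest modulus is comparable to $\min\bigl(|\theta_i^2d(z_0)-1|,\ \cdot\bigr)$ divided by the trace, which degenerates like $1/\theta_i$ as $\theta_i\to\infty$.

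To handle this, I would rescale each block by $D=\diag\bigl((1+\theta_i)^{-1/2}\bigr)$, a congruence that preserves inertia. The rescaled deterministic blocks then have eigenvalues bounded away from $0$ by some $c(\gamma,z_0)>0$, uniformly in $\theta_i\in(0,\infty)$. This can be checked by an explicit case split into $\theta_i$ bounded and $\theta_i$ large. The rescaled error is controlled by the weighted quantity
\[
\norm{D|\calK_n|^{1/2}(\calG_n-\calM)|\calK_n|^{1/2}D}_{\op}\lesssim\norm{(\calG_n-\calM)}_{\op}+\norm{(\calG_n-\calM)\calK_n}_{\op}.
\]
The right-hand side tends to zero by \cref{ass:real-SLL} and \cref{ass:weighted-compatibility}; if a cleaner form is needed, one can interpolate $\norm{|\calK|^{1/2}A|\calK|^{1/2}}\le\norm{A\calK}^{1/2}\norm{\calK A}^{1/2}$ for Hermitian $A$.

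Weyl's inequality then shows that, with probability $1-o(1)$, the perturbed matrix has the same number of negative eigenvalues as the deterministic one. This gives \eqref{eq:exact-level}. No rank condition enters, since every bound is in operator norm with constants independent of $r_n$.

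For the final sentence of the corollary, apply \eqref{eq:exact-level} at both endpoints of each fixed superedge interval and subtract the two counts. Finitely many intervals are handled by a union of events, each of probability $1-o(1)$.
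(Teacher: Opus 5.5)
Your strategy (Schur complement on the $\calK_n^{-1}$ side, then Weyl with a uniform gap) works, but the inertia bookkeeping contains two false statements that happen to cancel.

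\emph{The Schur-complement count is off by $r_n$.} Haynsworth additivity for $\mathcal B=\begin{pmatrix} z_0\Id-\calX_n&\calE_n\\ \calE_n^*&\calK_n^{-1}\end{pmatrix}$, with $z_0\Id-\calX_n\succ0$, gives
\[
 n_-\bigl(\calK_n^{-1}-\calG_n(z_0)\bigr)=n_-(\calK_n^{-1})+n_-(z_0\Id-\calW_n),
\]
where $n_-$ counts negative eigenvalues. The matrix $\calK_n^{-1}$ has eigenvalues $\pm\theta_{n,i}^{-1}$, so $n_-(\calK_n^{-1})=r_n$. Hence $\#\{j:\sigma_j(W_n)>z_0\}=n_-(\calK_n^{-1}-\calG_n(z_0))-r_n$, not $n_-(\calK_n^{-1}-\calG_n(z_0))$. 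Already for $X_n=0$ and $r_n=1$, the matrix $\calK^{-1}-z_0^{-1}\Id_2$ has eigenvalues $-z_0^{-1}\pm\theta^{-1}$, so it always has a negative eigenvalue.

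\emph{The block count is also wrong.} Your block $\begin{pmatrix}-\theta_i\phi&1\\1&-\theta_i\widetilde\phi\end{pmatrix}$ has negative trace. When its determinant $\theta_i^2d(z_0)-1$ is positive it therefore has \emph{two} negative eigenvalues, and when the determinant is negative it has one. The deterministic count is $r_n+\#\{i:\theta_{n,i}>\vartheta_0\}$.

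Stating both counts correctly and subtracting $r_n$ gives \eqref{eq:exact-level}. The repair is easy, but as written each step is false.

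Once repaired, your route is genuinely different from the paper's, and slightly stronger. The paper counts eigenvalues above one of $\calG_n^{1/2}\calK_n\calG_n^{1/2}$ through \eqref{eq:inertia-count}. Its comparison matrix $\sqrt{d(z_0)}\,\calK_n$ has eigenvalues $\pm\theta_{n,i}/\vartheta_0$, so the gap $\min\{\gamma/\vartheta_0,1\}$ is uniform in spike size for free. The price is that the error is weighted by $\calK_n$, which needs \cref{lem:weighted-square-root} and weighted compatibility.

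In your Schur-complement form the error is unweighted. The factor $D|\calK_n|^{1/2}$ has entries $\sqrt{\theta_{n,i}/(1+\theta_{n,i})}\le1$, so your rescaled error is at most $\norm{\calG_n(z_0)-\calM(z_0)}_{\op}$. The weighted term in your bound is therefore superfluous.

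In fact neither the symmetrization nor $D$ is needed. The matrix $\calK_n^{-1}-\calG_n(z_0)$ differs from $\calK_n^{-1}-\calM(z_0)$ by exactly $\calM-\calG_n$. The blocks $\begin{pmatrix}-\phi&\theta^{-1}\\\theta^{-1}&-\widetilde\phi\end{pmatrix}$ have all eigenvalues bounded away from zero, uniformly over $\theta>0$ with $|\theta-\vartheta_0|\ge\gamma$: bound $|\det|/\norm{\cdot}_{\op}$ separately on the two sides of $\vartheta_0$. Large spikes push the eigenvalues toward $-\phi,-\widetilde\phi$, and small spikes push them toward $\pm\theta^{-1}$. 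So the degeneration you feared for large $\theta_i$ does not occur in either form; in your symmetrized form both the gap and the error grow like $\theta_i$.

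Consequently your argument proves the corollary from the unweighted real subspace law at $z_0$ alone. The price is a short case analysis for the deterministic gap, in place of the paper's one-line gap.
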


\begin{corollary}[Critical and subcritical spikes do not create macroscopic outliers]\label{cor:no-critical-outlier}
Assume \cref{ass:global-noise,ass:edge,ass:real-SLL}, the weighted compatibility condition on every compact interval in $(b,\infty)$, and $r_n=o(n)$.  If
\[
    \theta_{n,1}\le\theta_\star+o_{\Pp}(1),
\]
then
\[
    \norm{W_n}_{\op}\longrightarrow b
\]
in probability.
\end{corollary}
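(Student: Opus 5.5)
The plan is to sandwich $\norm{W_n}_{\op}=\sigma_1(W_n)$ between a lower bound coming from the bulk and an upper bound coming from the exact-separation count. Fix $\eps>0$; it suffices to show that $b-\eps\le\sigma_1(W_n)\le b+\eps$ with probability tending to one.

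\emph{Lower bound.} This uses only \cref{thm:global-law} together with $r_n=o(n)$. Since $\mu_{W_nW_n^*}\weak\mu$ in probability and $b^2=\sup\supp\mu$, we have $\mu((b-\eps)^2,\infty)>0$ (replace $b-\eps$ by $0$ if $b-\eps<0$). Weak convergence then forces $\mu_{W_nW_n^*}$ to put positive mass above $(b-\eps)^2$ with probability tending to one. Hence $\sigma_1(W_n)>b-\eps$ with probability tending to one.

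\emph{Upper bound.} Set $z_0:=b+\eps$, which is a fixed point in $(b,\infty)$. Let $\vartheta_0:=d(z_0)^{-1/2}$. Because $d$ is strictly decreasing on $(b,\infty)$ with $d(b+)=\theta_\star^{-2}$ (\cref{ass:edge}), we get $d(z_0)<\theta_\star^{-2}$, that is, $\vartheta_0>\theta_\star$. Put $\gamma:=(\vartheta_0-\theta_\star)/2>0$. By hypothesis, with probability tending to one, $\theta_{n,1}\le\theta_\star+\gamma$, and therefore every spike satisfies
\[
\theta_{n,i}\le\theta_\star+\gamma=\vartheta_0-\gamma.
\]
So $\min_i|\theta_{n,i}-\vartheta_0|\ge\gamma$ with probability tending to one. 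The weighted compatibility condition is assumed on every compact interval in $(b,\infty)$, in particular on a neighborhood of $z_0$. Hence \cref{cor:exact-separation} applies and gives
\[
\#\{j:\sigma_j(W_n)>z_0\}=\#\{i:\theta_{n,i}>\vartheta_0\}=0
\]
with probability tending to one. This says $\sigma_1(W_n)\le b+\eps$.

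\emph{Main obstacle.} The only delicate point is that the hypothesis is $\theta_{n,1}\le\theta_\star+o_{\Pp}(1)$, so spikes may sit exactly at, or approach, the critical value. The corollary's separation requirement must therefore be checked at the level $\vartheta_0$ strictly above $\theta_\star$, not at $\theta_\star$ itself. This is why $z_0$ is chosen first and $\gamma$ is derived from it, using strict monotonicity of $d$. Note also that \cref{cor:exact-separation} allows the separation event to hold only with probability tending to one, which matches the random $o_{\Pp}(1)$ hypothesis. Combining the two bounds and letting $\eps\downarrow0$ gives $\norm{W_n}_{\op}\to b$ in probability.
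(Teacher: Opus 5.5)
Your proof is correct and follows essentially the same route as the paper. The paper also fixes a level $b+\alpha$ and uses strict monotonicity of $d$ to keep every spike a fixed distance below the crossing threshold. It then inlines the weighted square-root comparison, Weyl, and inertia-count argument, which you instead obtain by citing \cref{cor:exact-separation}. The lower bound comes from \cref{thm:global-law} in both.
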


\subsection{Outlier singular subspaces}
The singular-vector statement is naturally expressed through spectral projectors.  For a Borel set $J\subset(0,\infty)$ that does not meet the singular spectrum at its boundary, let
\begin{align*}
  \widehat\Pi^{\mathrm L}_n(J)
  &:=\sum_{j:\,\sigma_j(W_n)\in J}\widehat u_{n,j}\widehat u_{n,j}^*,\\
  \widehat\Pi^{\mathrm R}_n(J)
  &:=\sum_{j:\,\sigma_j(W_n)\in J}\widehat v_{n,j}\widehat v_{n,j}^*,\\
  \widehat C_n(J)
  &:=\sum_{j:\,\sigma_j(W_n)\in J}\widehat u_{n,j}\widehat v_{n,j}^*.
\end{align*}
These three objects do not depend on the choice of singular-vector basis inside a repeated singular-value block, provided the same rotation is used on the left and right.

For each $J$, define diagonal $r_n\times r_n$ matrices
\begin{align}\label{eq:Kappa-matrices}
  K_{\mathrm L,n}(J)&:=\diag\bigl(\cL(\theta_{n,i})\one_{\{\rho(\theta_{n,i})\in J\}}\bigr)_{i=1}^{r_n},\\
  K_{\mathrm R,n}(J)&:=\diag\bigl(\cR(\theta_{n,i})\one_{\{\rho(\theta_{n,i})\in J\}}\bigr)_{i=1}^{r_n},\\
  K_{\mathrm{LR},n}(J)&:=\diag\bigl(\cLR(\theta_{n,i})\one_{\{\rho(\theta_{n,i})\in J\}}\bigr)_{i=1}^{r_n}.
\end{align}

\begin{definition}[Stable outlier contour]\label{def:stable-contour}
Let $\Gamma$ be a positively oriented finite union of rectifiable Jordan curves, and let $\Omega_\Gamma$ denote the union of their interiors.  We call $\Gamma$ stable for $(\Theta_n)$ if
\[
   \overline{\Omega_\Gamma}\Subset\C\setminus[-b,b],
\]
it encloses a specified subset of the positive scalar roots $\rho(\theta_{n,i})$, and
\begin{equation}\label{eq:contour-stability}
  \sup_n\sup_{\zeta\in\Gamma}
  \norm{\bigl(\Id_{2r_n}-\calM(\zeta)\calK_n\bigr)^{-1}}_{\op}<\infty.
\end{equation}
A compact finite union of intervals $J\Subset(b,\infty)$ is called stable if the deterministic locations stay a fixed positive distance from $\partial J$ and there is a stable contour $\Gamma$ with $\Omega_\Gamma\cap\mathbb R=\operatorname{int}J$ whose enclosed positive roots are exactly those lying in $J$.
\end{definition}

In the scalar models treated below, fixed outlier windows whose boundaries stay a positive distance from the deterministic locations are stable; this is proved in \cref{lem:windows-stable}.  If $\Theta_n$ is random, stability means that one can choose a deterministic contour for which the boundary separation and the inverse bound in \eqref{eq:contour-stability} hold on events of probability tending to one; all conclusions are then understood on those events.  We state this requirement explicitly so that the singular-subspace theorem does not hide a separation condition at the boundary.  Stability permits arbitrary multiplicity and arbitrary spacing inside $J$.

\begin{theorem}[Compressed outlier projectors]\label{thm:projectors}
Assume \cref{ass:global-noise,ass:edge,ass:holo-SLL,ass:weighted-compatibility}.  Let $J\Subset(b,\infty)$ be a stable finite union of intervals.  Then, with probability tending to one, $\partial J$ contains no singular value of $W_n$, the number of singular values of $W_n$ in $J$ equals the number of deterministic locations $\rho(\theta_{n,i})$ in $J$, and
\begin{align}
 \norm{U_n^*\widehat\Pi^{\mathrm L}_n(J)U_n-K_{\mathrm L,n}(J)}_{\op}&\longrightarrow0,\label{eq:left-projector}\\
 \norm{V_n^*\widehat\Pi^{\mathrm R}_n(J)V_n-K_{\mathrm R,n}(J)}_{\op}&\longrightarrow0,\label{eq:right-projector}\\
 \norm{U_n^*\widehat C_n(J)V_n-K_{\mathrm{LR},n}(J)}_{\op}&\longrightarrow0\label{eq:cross-projector}
\end{align}
in probability.

Equivalently, if $\widehat\calP_n(J)$ denotes the positive spectral projector of $\calW_n$ onto the eigenvalues in $J$, then
\begin{equation}\label{eq:full-compressed-projector}
 \calE_n^*\widehat\calP_n(J)\calE_n
 -\frac12
 \begin{pmatrix}
   K_{\mathrm L,n}(J)&K_{\mathrm{LR},n}(J)\\
   K_{\mathrm{LR},n}(J)&K_{\mathrm R,n}(J)
 \end{pmatrix}
 \longrightarrow0
\end{equation}
in operator norm and in probability.
\end{theorem}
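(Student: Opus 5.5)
The proof is a Riesz-projector computation for the linearization $\calW_n=\calX_n+\calE_n\calK_n\calE_n^*$. Fix a stable contour $\Gamma$ for $J$ as in \cref{def:stable-contour}. Since $\overline{\Omega_\Gamma}\Subset\C\setminus[-b,b]$ and $\norm{X_n}_{\op}\to b$, with probability tending to one $\Gamma$ and $\Omega_\Gamma$ avoid $\spec(\calX_n)$, so $\calR_{X,n}$ is holomorphic on a neighborhood of $\overline{\Omega_\Gamma}$.

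\textbf{Step 1: Woodbury formula and the master equation.} Woodbury gives
\[
\calE_n^*(\zeta-\calW_n)^{-1}\calE_n=\calG_n(\zeta)\bigl(\Id_{2r_n}-\calK_n\calG_n(\zeta)\bigr)^{-1}.
\]
The poles of the right side inside $\Omega_\Gamma$ are exactly the $\zeta$ with $\det(\Id-\calK_n\calG_n(\zeta))=0$, which is the linearized master equation of \cref{lem:master-equation}. These zeros are the eigenvalues of $\calW_n$ there.

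\textbf{Step 2: Perturbation bound on the contour.} Write
\[
\Id-\calK_n\calG_n=(\Id-\calK_n\calM)\Bigl[\Id-(\Id-\calK_n\calM)^{-1}\calK_n(\calG_n-\calM)\Bigr].
\]
The stability bound \eqref{eq:contour-stability}, transposed to $(\Id-\calK_n\calM)^{-1}$, is the same kind of bound. Combined with \cref{ass:weighted-compatibility}, the bracket is a uniformly small perturbation of the identity on $\Gamma$. Hence $(\Id-\calK_n\calG_n)^{-1}$ is uniformly bounded on $\Gamma$, and it differs from $(\Id-\calK_n\calM)^{-1}$ by $o_\Pp(1)$ in operator norm. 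The same holds for $\calG_n(\Id-\calK_n\calG_n)^{-1}$ compared with $\calM(\Id-\calK_n\calM)^{-1}$. The only care needed is with unbounded $\calK_n$: the product $\calG_n(\Id-\calK_n\calG_n)^{-1}$ should be expanded so that $\calK_n$ always appears next to $\calG_n-\calM$. For this I would use the identity
\[
\calG(\Id-\calK\calG)^{-1}=\calG+\calG\calK\calG(\Id-\calK\calG)^{-1}
\]
together with boundedness of $\calM\calK(\Id-\calM\calK)^{-1}=(\Id-\calM\calK)^{-1}-\Id$.

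\textbf{Step 3: Counting outliers in $J$.} The count of eigenvalues in $J$ is the number of zeros of $\det(\Id-\calK_n\calG_n(\zeta))$ in $\Omega_\Gamma$, counted with multiplicity. Because this determinant has growing dimension, a direct Rouché argument is delicate. Instead I would run a homotopy $\calM+t(\calG_n-\calM)$ for $t\in[0,1]$. Step 2 shows that $\Id-\calK_n(\calM+t(\calG_n-\calM))$ is invertible on $\Gamma$ for every $t$. The winding number $\frac1{2\pi\ii}\oint_\Gamma\tr\bigl[(\Id-\calK A_t)^{-1}\calK A_t'\bigr]\,\dd\zeta$ is therefore integer-valued and continuous in $t$, hence constant. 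At $t=0$ the matrix decouples into $2\times2$ blocks $\begin{pmatrix}1&-\theta\widetilde\phi\\-\theta\phi&1\end{pmatrix}$, with determinant $1-\theta^2 d(\zeta)$. Its zeros in $\Omega_\Gamma$ are the enclosed roots $\rho(\theta_{n,i})$, and each is simple since $d'\neq0$ there. To get the eigenvalue count from the zero count I still need the fact that the multiplicity of a zero of $\det(\Id-\calK\calG)$ equals the eigenvalue multiplicity of $\calW_n$. This follows from the determinant lemma
\[
\det(\zeta-\calW)=\det(\zeta-\calX)\det(\Id-\calK\calG).
\]
The absence of eigenvalues on $\partial J$ follows from invertibility on $\Gamma$, since $\Gamma$ crosses the real axis only at $\partial J$.

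\textbf{Step 4: Projectors and the residue computation.} The projector is
\[
\calE_n^*\widehat\calP_n(J)\calE_n=\frac1{2\pi\ii}\oint_\Gamma\calE_n^*(\zeta-\calW_n)^{-1}\calE_n\,\dd\zeta .
\]
By Step 2 and the finite length of $\Gamma$, this equals $\frac1{2\pi\ii}\oint_\Gamma\calM(\Id-\calK_n\calM)^{-1}\dd\zeta+o_\Pp(1)$. The deterministic integrand is block diagonal over the spikes. Each $2\times2$ block equals
\[
\frac{1}{1-\theta^2 d}\begin{pmatrix}\phi&\theta d\\ \theta d&\widetilde\phi\end{pmatrix}.
\]
At $\rho=\rho(\theta)$ this block has residue
\[
-\frac{1}{\theta^2 d'(\rho)}\begin{pmatrix}\phi&\theta^{-1}\\\theta^{-1}&\widetilde\phi\end{pmatrix},
\]
which is exactly $\tfrac12\begin{pmatrix}\cL&\cLR\\\cLR&\cR\end{pmatrix}$. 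This gives \eqref{eq:full-compressed-projector}. The diagonal blocks of $\widehat\calP_n(J)$ are $\tfrac12\widehat\Pi^{\mathrm L}$ and $\tfrac12\widehat\Pi^{\mathrm R}$, and the off-diagonal block is $\tfrac12\widehat C$, because each positive eigenvector is $(\widehat u,\widehat v)/\sqrt2$. Reading off the blocks yields \eqref{eq:left-projector}--\eqref{eq:cross-projector}.

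The main obstacle is Steps 2 and 3 together: keeping the perturbation bounds uniform in the growing dimension and in unbounded $\calK_n$ using only the weighted compatibility condition, and making the homotopy count rigorous in place of a fixed-dimension Rouché argument.
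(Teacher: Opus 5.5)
Your proposal is correct and follows essentially the paper's route: the homotopy/argument-principle count and the compressed-resolvent transfer on a stable contour (as in \cref{prop:contour-transfer}), then the $2\times2$ residue \eqref{eq:residue-block}, then reading off the blocks of $\widehat\calP_n(J)$; working with $\Id-\calK_n\calG_n$ rather than $\Id-\calG_n\calK_n$ is only the push-through variant. One fact you use without justification is that $1-\theta^2d(\zeta)$ has no non-real zeros in $\Omega_\Gamma$, so that the enclosed scalar roots are exactly the $\rho(\theta_{n,i})\in J$; the paper supplies this via \cref{lem:windows-stable}, and it also follows at once because $\Im\phi<0$ and $\Im\widetilde\phi<0$ in the upper half-plane force $\arg d\in(-2\pi,0)$.
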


\begin{remark}[Why projectors, not labels]\label{rem:projectors-not-labels}
If several spike strengths coincide, individual singular vectors are not identifiable: any common orthogonal or unitary rotation inside the corresponding left/right singular subspaces gives another valid singular-value decomposition.  Even when the strengths are merely closer than the error scale, a vector-by-vector labeling is unstable.  The three operators in \cref{thm:projectors} are basis invariant and remain meaningful in exactly those cases.  Individual vectors are recovered only after an isolation assumption, as in the next corollary.
\end{remark}

\begin{corollary}[Isolated singular-vector overlaps]\label{cor:isolated-vector}
Under the hypotheses of \cref{thm:projectors}, suppose that a stable interval $J$ contains exactly one deterministic location, generated by a simple spike $\theta_{n,i}\to\theta>\theta_\star$, and no other deterministic location.  Let $(\widehat u_n,\widehat v_n)$ be the corresponding unit left and right singular vectors of $W_n$.  Then
\begin{align}
  |\ip{u_{n,i}}{\widehat u_n}|^2&\longrightarrow\cL(\theta),\label{eq:isolated-left}\\
  |\ip{v_{n,i}}{\widehat v_n}|^2&\longrightarrow\cR(\theta),\label{eq:isolated-right}\end{align}
in probability, and the projections onto every other signal direction vanish.  Moreover, without any additional phase convention,
\begin{equation}\label{eq:isolated-cross}
   \ip{u_{n,i}}{\widehat u_n}\,
   \overline{\ip{v_{n,i}}{\widehat v_n}}
   \longrightarrow\cLR(\theta).
\end{equation}
\end{corollary}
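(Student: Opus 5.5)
We derive \cref{cor:isolated-vector} directly from \cref{thm:projectors} applied to the stable interval $J$. By that theorem, with probability tending to one, $W_n$ has exactly one singular value $\sigma$ in $J$, so $\widehat\Pi^{\mathrm L}_n(J)=\widehat u_n\widehat u_n^*$, $\widehat\Pi^{\mathrm R}_n(J)=\widehat v_n\widehat v_n^*$ and $\widehat C_n(J)=\widehat u_n\widehat v_n^*$. (We may assume $\sigma$ is simple; if the singular value in $J$ had multiplicity, the count would exceed one.) Since $J$ contains only the location $\rho(\theta_{n,i})$, the matrices $K_{\mathrm L,n}(J)$, $K_{\mathrm R,n}(J)$ and $K_{\mathrm{LR},n}(J)$ each have a single nonzero entry, in position $(i,i)$. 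Those entries are $\cL(\theta_{n,i})$, $\cR(\theta_{n,i})$ and $\cLR(\theta_{n,i})$.

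The three compressed operators are rank one:
\[
U_n^*\widehat u_n\widehat u_n^*U_n=aa^*,\qquad V_n^*\widehat v_n\widehat v_n^*V_n=bb^*,\qquad U_n^*\widehat u_n\widehat v_n^*V_n=ab^*,
\]
where $a=U_n^*\widehat u_n$ and $b=V_n^*\widehat v_n$. Their entries are therefore $a_k\overline{a_l}$, $b_k\overline{b_l}$ and $a_k\overline{b_l}$, with $a_k=\ip{u_{n,k}}{\widehat u_n}$ (inner product conjugate-linear in the first slot) and $b_k=\ip{v_{n,k}}{\widehat v_n}$.

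Entrywise control follows from operator-norm convergence.
\begin{itemize}
\item The $(i,i)$ entries give $|a_i|^2-\cL(\theta_{n,i})\to0$ and $|b_i|^2-\cR(\theta_{n,i})\to0$.
\item The cross entry gives $a_i\overline{b_i}-\cLR(\theta_{n,i})\to0$. This is exactly \eqref{eq:isolated-cross}, because replacing $(\widehat u_n,\widehat v_n)$ by $(e^{\ii\alpha}\widehat u_n,e^{\ii\alpha}\widehat v_n)$ leaves $a_i\overline{b_i}$ unchanged. That is why no phase convention is needed.
\item For the other directions, the norm of the off-target part is controlled by the operator norm: $\norm{a-a_ie_i}^2=\norm{(\Id-e_ie_i^*)aa^*(\Id-e_ie_i^*)}_{\op}$. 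The target compresses to zero there, so this tends to zero. Hence $\sum_{k\ne i}|a_k|^2\to0$, and likewise $\sum_{k\ne i}|b_k|^2\to0$. This is the statement that the projections onto every other signal direction vanish, uniformly even as $r_n\to\infty$.
\end{itemize}

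To pass from $\theta_{n,i}$ to $\theta$, we use continuity of $\cL$, $\cR$ and $\cLR$ on $(\theta_\star,\infty)$. This holds because $\rho$ is continuous and strictly increasing there, and $\phi$, $\widetilde\phi$ and $d'$ are real-analytic on $(b,\infty)$ with $d'<0$. Together with $\theta_{n,i}\to\theta>\theta_\star$, this gives \eqref{eq:isolated-left} and \eqref{eq:isolated-right}.

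The only delicate point is bookkeeping rather than analysis: identifying the single-outlier projectors with rank-one products, and confirming that the index $i$ is determined. Although $i=i_n$ may depend on $n$, it is pinned down as the unique spike whose location lies in $J$. All the substantive work, including the holomorphic contour argument and stability, is already contained in \cref{thm:projectors}.
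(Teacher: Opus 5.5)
Your proposal is correct and follows the paper's own route: the paper's proof likewise notes that the three projectors of \cref{thm:projectors} become rank one for an isolated simple outlier, reads the squared cosines and the vanishing of the other signal directions off the diagonal blocks of \eqref{eq:full-compressed-projector}, and reads the phase-invariant cross product off the off-diagonal block. Your additions are details the paper leaves implicit: the bound on $a-a_ie_i$ through the projected rank-one operator, the multiplicity-one count that makes the outlier simple, and continuity of the weights (\cref{lem:weight-continuity}) to pass from $\theta_{n,i}$ to $\theta$.
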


\begin{corollary}[Repeated and tight clusters]\label{cor:cluster-vectors}
Assume the hypotheses of \cref{thm:projectors}.  Let $C_n\subset\{1,\ldots,r_n\}$ be a cluster separated from all other deterministic outlier locations by a stable contour, and suppose
\[
   \max_{i\in C_n}|\theta_{n,i}-\theta|\longrightarrow0
\]
for some $\theta>\theta_\star$.  Let $q_n=|C_n|$, and let $\widehat U_{C_n}$ and $\widehat V_{C_n}$ be paired left and right singular-vector matrices for the empirical singular values enclosed by the contour, with the same orthogonal or unitary change of basis applied on the two sides inside every repeated empirical singular-value block.  There exists a random $q_n\times q_n$ orthogonal or unitary matrix $Q_n$ such that
\begin{align}
  \norm{U_{C_n}^*\widehat U_{C_n}-\sqrt{\cL(\theta)}\,Q_n}_{\op}&\longrightarrow0,\label{eq:cluster-left}\\
  \norm{V_{C_n}^*\widehat V_{C_n}-\sqrt{\cR(\theta)}\,Q_n}_{\op}&\longrightarrow0.\label{eq:cluster-right}
\end{align}
Thus the same asymptotic rotation identifies the left and right cluster, while the two sides have different deterministic cosine factors.
\end{corollary}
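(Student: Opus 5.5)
We deduce \cref{cor:cluster-vectors} from \cref{thm:projectors} applied to the stable window $J$ enclosing the cluster, followed by a polar-decomposition argument. Set $A_n:=U_{C_n}^*\widehat U_{C_n}$ and $B_n:=V_{C_n}^*\widehat V_{C_n}$, both $q_n\times q_n$. By \cref{thm:projectors}, with probability tending to one the number of empirical singular values inside the contour equals $q_n$, so these matrices are square. Since $\widehat\Pi^{\mathrm L}_n(J)=\widehat U_{C_n}\widehat U_{C_n}^*$, compressing \eqref{eq:left-projector} to the $C_n\times C_n$ block gives $\norm{A_nA_n^*-K_{\mathrm L}}_{\op}\to0$, where $K_{\mathrm L}$ is the $C_n$ block of $K_{\mathrm L,n}(J)$. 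Because $\max_{i\in C_n}|\theta_{n,i}-\theta|\to0$ and $\cL$ is continuous at $\theta>\theta_\star$, we have $K_{\mathrm L}=\cL(\theta)\Id_{q_n}+o(1)$ in operator norm. The same argument gives $B_nB_n^*\to\cR(\theta)\Id$, and the $C_n$ block of \eqref{eq:cross-projector} gives $A_nB_n^*=U_{C_n}^*\widehat U_{C_n}\widehat V_{C_n}^*V_{C_n}\to\cLR(\theta)\Id=\sqrt{\cL\cR}\,\Id$ by \eqref{eq:cross-weight}. The pairing convention on the two sides makes $\widehat C_n(J)=\widehat U_{C_n}\widehat V_{C_n}^*$, which is what justifies this identity.

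Next we define $Q_n$. Since $\cL(\theta)>0$, $A_nA_n^*$ is eventually invertible with all singular values of $A_n$ equal to $\sqrt{\cL(\theta)}+o(1)$ uniformly. Let $A_n=|A_n^*|Q_n$ be the polar decomposition with $Q_n$ unitary (orthogonal if $\F=\R$), where $|A_n^*|=(A_nA_n^*)^{1/2}$. Operator-norm continuity of the square root on matrices with spectrum bounded away from zero gives $\norm{(A_nA_n^*)^{1/2}-\sqrt{\cL(\theta)}\Id}_{\op}\le C\norm{A_nA_n^*-\cL\Id}_{\op}$ with a dimension-free constant. This bound follows, for example, from $\norm{S-T}\le\norm{S^2-T^2}/(\lambda_{\min}(S)+\lambda_{\min}(T))$ for positive $S,T$. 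It yields \eqref{eq:cluster-left}.

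For the right side we use the same $Q_n$: set $E_n:=B_n-\sqrt{\cR}\,Q_n$. Then
\[
E_nE_n^*=B_nB_n^*-\sqrt{\cR}\,(B_nQ_n^*+Q_nB_n^*)+\cR\Id .
\]
Replacing $Q_n$ by $A_n/\sqrt{\cL}$ costs $o(1)$, and this gives $B_nQ_n^*\approx B_nA_n^*/\sqrt{\cL}\to\sqrt{\cR}\,\Id$. Hence $E_nE_n^*\to\cR-2\cR+\cR=0$ in operator norm, so $\norm{E_n}_{\op}\to0$, which is \eqref{eq:cluster-right}. All bounds are uniform in $q_n$ because they are operator-norm identities, so growing cluster size causes no difficulty.

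The main obstacle is the bookkeeping at the step that matches the left and right sides. We must check that the paired choice of $\widehat U_{C_n},\widehat V_{C_n}$ really makes $\widehat C_n(J)=\widehat U_{C_n}\widehat V_{C_n}^*$. We must also control the approximation $Q_n\approx A_n/\sqrt{\cL}$ uniformly: $\norm{Q_n-A_n/\sqrt{\cL}}_{\op}\le\norm{\Id-|A_n^*|/\sqrt{\cL}}_{\op}\to0$, using only that $\cL(\theta)$ is bounded below.
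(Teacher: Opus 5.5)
Your proposal is correct and follows essentially the same route as the paper. Both arguments take the three block relations $A_nA_n^*\approx\cL(\theta)\Id$, $B_nB_n^*\approx\cR(\theta)\Id$ and $A_nB_n^*\approx\cLR(\theta)\Id$ from \cref{thm:projectors}, form the polar factor $Q_n$ of $A_n$, and transfer it to $B_n$ using $\cLR=\sqrt{\cL\cR}$.

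Your expansion of $E_nE_n^*$ spells out the step the paper compresses into ``the cross relation then gives.'' The one piece of bookkeeping you leave implicit is the definition of $Q_n$ off the vanishing-probability event where $A_n$ fails to be square and invertible; the paper sets $Q_n:=\Id$ there.
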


\subsection{Empirical outlier and overlap measures}
Assume now that $r_n\to\infty$.  Define the spike empirical measure
\begin{equation}\label{eq:Hn}
   H_n:=\frac1{r_n}\sum_{i=1}^{r_n}\delta_{\theta_{n,i}}.
\end{equation}
For a paired singular-vector choice, set
\begin{equation}\label{eq:cross-coefficient}
  \chi_{n,j}:=\tr\bigl(U_n^*\widehat u_{n,j}\widehat v_{n,j}^*V_n\bigr)
  =\sum_{i=1}^{r_n}\ip{u_{n,i}}{\widehat u_{n,j}}
      \overline{\ip{v_{n,i}}{\widehat v_{n,j}}}.
\end{equation}
This scalar is unchanged by the common phase ambiguity of a singular-vector pair.  Inside a repeated empirical singular-value block, the individual coefficients depend on the chosen common basis, but their sum---and hence the atom of the measure below---is invariant.  On the singular-value scale, define
\begin{align}
 \nu_n^{\mathrm{out}}&:=\frac1{r_n}\sum_{j=1}^{n}
        \one_{\{\sigma_j(W_n)>b\}}\delta_{\sigma_j(W_n)},\label{eq:outlier-measure}\\
 \omega_{n}^{\mathrm L}&:=\frac1{r_n}\sum_{j=1}^{n}
        \one_{\{\sigma_j(W_n)>b\}}
        \norm{U_n^*\widehat u_{n,j}}_2^2\,\delta_{\sigma_j(W_n)},\label{eq:left-overlap-measure}\\
 \omega_{n}^{\mathrm R}&:=\frac1{r_n}\sum_{j=1}^{n}
        \one_{\{\sigma_j(W_n)>b\}}
        \norm{V_n^*\widehat v_{n,j}}_2^2\,\delta_{\sigma_j(W_n)},\label{eq:right-overlap-measure}\\
 \omega_{n}^{\mathrm{LR}}&:=\frac1{r_n}\sum_{j=1}^{n}
        \one_{\{\sigma_j(W_n)>b\}}
        \chi_{n,j}\,\delta_{\sigma_j(W_n)}.\label{eq:cross-overlap-measure}
\end{align}
The last measure is generally complex-valued at finite $n$; its limit is a positive measure.  Only test functions supported a positive distance above $b$ are used, so the indicator at the limiting edge causes no ambiguity.

\begin{theorem}[Outlier-value and overlap laws]\label{thm:empirical-laws}
Assume \cref{ass:global-noise,ass:edge,ass:holo-SLL,ass:weighted-compatibility} and $r_n\to\infty$.  Suppose that the restricted supercritical spike measures
\[
  H_n^+:=H_n|_{(\theta_\star,\infty)}
\]
converge vaguely in probability on $(\theta_\star,\infty)$ to a deterministic finite measure $H^+$.  Then, vaguely in probability on $(b,\infty)$,
\begin{align}
  \nu_n^{\mathrm{out}}&\vague \rho_\#H^+,\label{eq:value-measure-limit}\\
  \omega_n^{\mathrm L}&\vague \rho_\#(\cL H^+),\label{eq:left-measure-limit}\\
  \omega_n^{\mathrm R}&\vague \rho_\#(\cR H^+),\label{eq:right-measure-limit}\\
  \omega_n^{\mathrm{LR}}&\vague \rho_\#(\cLR H^+).\label{eq:cross-measure-limit}
\end{align}
Equivalently, for every $f\in C_c((b,\infty))$,
\begin{align*}
  \frac1{r_n}\sum_j f(\sigma_j(W_n))
      &\longrightarrow \int f(\rho(\theta))\,H^+(\dd\theta),\\
  \frac1{r_n}\sum_j f(\sigma_j(W_n))\norm{U_n^*\widehat u_{n,j}}_2^2
      &\longrightarrow \int f(\rho(\theta))\cL(\theta)\,H^+(\dd\theta),\\
  \frac1{r_n}\sum_j f(\sigma_j(W_n))\norm{V_n^*\widehat v_{n,j}}_2^2
      &\longrightarrow \int f(\rho(\theta))\cR(\theta)\,H^+(\dd\theta),\\
  \frac1{r_n}\sum_j f(\sigma_j(W_n))\chi_{n,j}
      &\longrightarrow \int f(\rho(\theta))\cLR(\theta)\,H^+(\dd\theta),
\end{align*}
where the sums may be taken over all singular values because $f$ vanishes near the edge.
\end{theorem}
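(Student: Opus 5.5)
The plan is to reduce all four limits to the projector result \cref{thm:projectors}, applied to finitely many stable windows, and then pass from windows to general test functions by approximation. The key observation is that each measure, integrated against a step function, is a normalized trace of one of the three compressed projectors. For a window $J\Subset(b,\infty)$ that avoids the singular spectrum at its boundary,
\[
 \nu_n^{\mathrm{out}}(J)=\tfrac1{r_n}\#\{j:\sigma_j(W_n)\in J\},
 \qquad
 \omega_n^{\mathrm L}(J)=\tfrac1{r_n}\tr\bigl(U_n^*\widehat\Pi^{\mathrm L}_n(J)U_n\bigr).
\]
Likewise $\omega_n^{\mathrm R}(J)$ and $\omega_n^{\mathrm{LR}}(J)$ are $\tfrac1{r_n}\tr(V_n^*\widehat\Pi^{\mathrm R}_n(J)V_n)$ and $\tfrac1{r_n}\tr(U_n^*\widehat C_n(J)V_n)$. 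On the deterministic side, $\tfrac1{r_n}\tr K_{\mathrm L,n}(J)=\int \cL(\theta)\one_{\{\rho(\theta)\in J\}}H_n^+(\dd\theta)$, and similarly for the other two weights. Since $|\tfrac1{r_n}\tr A|\le\norm{A}_{\op}$ for any $r_n\times r_n$ matrix, the operator-norm convergences \eqref{eq:left-projector}--\eqref{eq:cross-projector} give the same convergence for these normalized traces. The count statement of \cref{thm:projectors} handles $\nu_n^{\mathrm{out}}(J)$ exactly.

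Next, I pass from windows to test functions. Fix $f\in C_c((b,\infty))$ with support in $[a_0,a_1]$, where $a_0>b$. Since $\rho$ is a homeomorphism of $(\theta_\star,\infty)$ onto $(b,\infty)$ and $H^+$ is finite, I choose a partition $a_0=t_0<\dots<t_L=a_1$ of mesh $<\eta$ such that each $\rho^{-1}(t_\ell)$ is not an atom of $H^+$. Then $\tfrac1{r_n}\#\{i:|\rho(\theta_{n,i})-t_\ell|<\gamma\}$ is small for small $\gamma$, by vague convergence of $H_n^+$.

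The main obstacle is stability. \Cref{thm:projectors} requires stable windows, which means the deterministic locations must stay a \emph{fixed} distance from $\partial J$. At growing rank a few spikes may sit near the breakpoints, since vague convergence only controls their proportion. I would handle this with a perturbation trick. Let $\Theta_n'$ be obtained by moving every $\theta_{n,i}$ with $\rho(\theta_{n,i})$ within $\gamma$ of some $t_\ell$ to a value whose image lies at distance exactly $\gamma$ from $t_\ell$; this yields $W_n'=X_n+U_n\Theta_n'V_n^*$. The modification $W_n-W_n'$ has rank at most $o_\gamma(1)\,r_n$ (proportion) and bounded norm, and it leaves $U_n,V_n$ unchanged, so the hypotheses and the weighted compatibility \eqref{eq:weighted-error} persist. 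The windows $[t_{\ell-1},t_\ell]$ are then stable for $\Theta_n'$ by \cref{lem:windows-stable}. Two interlacing facts control the cost of this change. Weyl interlacing for finite-rank perturbations shifts the counting function of $\nu_n^{\mathrm{out}}$ by at most $2\rank(W_n-W_n')/r_n$. For the overlap measures, I would bound the change in each compressed projector trace by the rank difference times its operator norm, which is at most $1$; this uses the fact that a rank-$k$ perturbation changes $\tr(Q^*\widehat\Pi(J)Q)$ by at most $O(k)$ via the Riesz representation, or, more safely, via a monotone interlacing argument on the Hermitian linearization. This is the step I expect to need the most care, especially for the cross term $\omega^{\mathrm{LR}}$. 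For that term I would recover it from \eqref{eq:full-compressed-projector} by polarization: $\tfrac1{r_n}\tr$ of the off-diagonal block equals a combination of compressed traces of the full projector against the unit frames $(U_n,\pm V_n)/\sqrt2$ and $(U_n,\pm\ii V_n)/\sqrt2$, each of which is Hermitian and has the rank-Lipschitz property.

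Finally, I approximate $f$ uniformly within $\omega(\eta)$ by the step function $\sum_\ell f(t_\ell)\one_{(t_{\ell-1},t_\ell]}$. The total masses of all four empirical measures on $[a_0,a_1]$ are bounded in probability, since $\norm{U_n^*\widehat u}\le1$ and the counts are controlled by the window result. The limits $\int f\circ\rho\,\kappa\,\dd H^+$ are approximated the same way, using that $\cL,\cR,\cLR$ are continuous and bounded on the compact set $\rho^{-1}([a_0,a_1])$. Letting $n\to\infty$, then $\gamma\to0$, then $\eta\to0$ gives each displayed limit in probability, which is the stated vague convergence. For $\omega_n^{\mathrm{LR}}$, positivity of the limit follows from $\cLR>0$.
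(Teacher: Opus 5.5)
Your count argument for $\nu_n^{\mathrm{out}}$ works: you stabilize the breakpoints, apply \cref{thm:projectors} to $W_n'$, and pay $2\rank(W_n-W_n')/r_n$ by Weyl interlacing. The overlap part, however, has a genuine gap at exactly the step you flagged. The claim that a rank-$k$, bounded-norm perturbation changes $\tr(Q^*\one_J(\calW)Q)$ by $O(k)$ is false when $\partial J$ does not lie in a spectral gap, and neither of your justifications applies.
\begin{itemize}
\item The Riesz formula $\widehat\calP_n(J)-\widehat\calP_n'(J)=\frac1{2\pi\ii}\oint_\Gamma(\zeta-\calW_n)^{-1}(\calW_n-\calW_n')(\zeta-\calW_n')^{-1}\dd\zeta$ needs $\Gamma$ to stay a fixed distance from $\spec(\calW_n)$. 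That is precisely what fails for the unmodified matrix near the breakpoints.
\item Interlacing controls unweighted counts, not weighted sums $\sum_j\one_J(\lambda_j)\norm{Q^*w_j}^2$.
\end{itemize}
Here is a concrete obstruction. Take $A=\diag(j/N)_{j\le N}$, $A'=A+vv^*$ with $v=N^{-1/2}(1,\ldots,1)$, and let $Q$ be the coordinate frame of $\{j\le N/2\}$. Then $\tr(Q^*\one_{(1/2,\infty)}(A)Q)=0$. But the eigenvectors of $A'$ with eigenvalues $\mu\in(1/2,3/4)$ have Cauchy tails $|w_\mu(j)|^2\asymp N^{-2}(\mu-j/N)^{-2}$, which gives $\tr(Q^*\one_{(1/2,\infty)}(A')Q)$ of order $\log N$. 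Adding $v$ to the range of $Q$ changes this by $O(1)$, so aligning the perturbation with the signal frame does not help. Your windows are in exactly this regime: order $r_n$ outliers at spacing $\asymp1/r_n$, and a modification of rank $\approx\varepsilon_\gamma r_n$ with bounded norm. Nothing in your argument excludes a normalized discrepancy of order $\varepsilon_\gamma\log r_n$, which does not vanish when $n\to\infty$ is taken before $\gamma\to0$. Polarization cannot rescue the cross term, since the problem is already present on the diagonal blocks.

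The repair within your scheme is to make the $W_n\leftrightarrow W_n'$ comparison with smooth test functions, and to use indicators only for $W_n'$.
\begin{enumerate}
\item For $f\in C_c^3((b,\infty))$, use Helffer--Sj\"ostrand with an order-two almost-analytic extension together with the resolvent identity. Each resolvent difference has rank at most $\rank(\calW_n-\calW_n')$ and norm at most $\norm{\calW_n-\calW_n'}_{\op}|y|^{-2}$, and the factor $|\bar\partial\tilde f|=O(|y|^2)$ absorbs the $|y|^{-2}$. This gives $|\tr(Q^*(f(\calW_n)-f(\calW_n'))Q)|\le C_f\rank(\calW_n-\calW_n')\norm{\calW_n-\calW_n'}_{\op}$.
\item Then approximate $f$ by step functions on the stabilized windows of $W_n'$. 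There \cref{thm:projectors} applies, and the approximation error is at most $\norm{f-f_{\mathrm{step}}}_\infty$ times the normalized outlier count.
\end{enumerate}
The paper's proof needs no stabilization at all. Values come from the fixed-level inertia sandwich \eqref{eq:count-sandwich}, which tolerates spikes near window edges. Overlaps come from a Helffer--Sj\"ostrand comparison of $\calE_n^*f(\calW_n)\calE_n$ directly with the scalar functional calculus $f[\calT_n^{(0)}]$. This is possible because the off-axis bound of \cref{lem:off-axis-scalar} is uniform in the spike strengths, so no gap at a window boundary is ever required.
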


\begin{remark}[Where the mesoscopic assumptions enter]\label{rem:where-meso}
The value, projector, and empirical transfer theorems do not assume $r_n=o(n)$.  The condition $r_n\to\infty$ appears in \cref{thm:empirical-laws} only because its normalization is designed for a growing collection.  Sublinear rank is used to preserve the global empirical law in \cref{thm:global-law} and in the concrete concentration estimates that verify the subspace law.  This separation prevents the probabilistic rank scale from being built unnecessarily into the deterministic statements.
\end{remark}

\section{Deterministic transfer from a subspace local law}\label{sec:transfer}

Everything in this section is deterministic.  No probability enters after the subspace local law has been assumed, which is the point of isolating it, and the reader who is willing to grant \cref{ass:real-SLL} or \cref{ass:holo-SLL} can read this section without knowing anything about the noise ensemble.

The plan is as follows.  One identity does all the work: a rank-$2r_n$ perturbation changes the characteristic function of the linearization by a determinant of size $2r_n$, and both of our questions are questions about that determinant.  We record it, together with the resolvent and counting identities that come with it, in \cref{lem:master-equation}.  Then we use it twice, in two different places in the complex plane.

On the real axis we do not try to solve the determinant equation at all.  We count.  The number of singular values above a level is the number of eigenvalues of one Hermitian matrix that exceed one, that matrix is close in operator norm to a deterministic one, and Weyl's inequality moves the count across.  Because a count is insensitive to which spike is which, this is where the absence of a spacing hypothesis comes from.  The one thing the argument does need is that the crossing be transverse---that the relevant scalar function pass through the critical level at a definite rate---which is \cref{lem:uniform-slope}.

Off the real axis we do solve it, in the only sense we need.  A homotopy from the deterministic equation to the true one keeps the master matrix invertible along the way, so the argument principle says the number of enclosed roots never changes; and the residue of the compressed deformed resolvent at each root is a $2\times2$ matrix whose entries are exactly the three overlap weights.  Integrating that against the contour turns a statement about roots into a statement about spectral projectors, which is the form that survives repeated spikes.

\subsection{Scalar convergence away from the noise spectrum}
Before any of that, a small piece of bookkeeping.  The two scalar transforms $\phi$ and $\widetilde\phi$ are defined through the limiting measure, but every concrete verification in \cref{sec:random-orientation,sec:haar-noise,sec:independent-entries} produces the finite-$n$ traces instead, so we record once and for all that the two agree in the limit, uniformly on compact sets away from the spectrum.  Define
\begin{align}
 \phi_n(\zeta)&:=\frac1n\tr\left[\zeta(\zeta^2\Id_n-X_nX_n^*)^{-1}\right],\label{eq:phi-n}\\
 \widetilde\phi_n(\zeta)&:=\frac1m\tr\left[\zeta(\zeta^2\Id_m-X_n^*X_n)^{-1}\right]
 =\frac nm\phi_n(\zeta)+\left(1-\frac nm\right)\frac1\zeta.\label{eq:phitilde-n}
\end{align}

\begin{lemma}[Uniform scalar convergence]\label{lem:scalar-convergence}
Under \cref{ass:global-noise}, for every compact $K\Subset\C\setminus[-b,b]$,
\begin{equation}\label{eq:scalar-uniform}
  \sup_{\zeta\in K}|\phi_n(\zeta)-\phi(\zeta)|
  +\sup_{\zeta\in K}|\widetilde\phi_n(\zeta)-\widetilde\phi(\zeta)|
  \longrightarrow0
\end{equation}
in probability.
\end{lemma}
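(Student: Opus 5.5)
Rewrite $\phi_n$ as an integral against the empirical measure:
\[
\phi_n(\zeta)=\int\frac{\zeta}{\zeta^2-x}\,\mu_{X_nX_n^*}(\dd x).
\]
The claim is then a uniform-on-compacts version of weak convergence, with one complication: the integrand $x\mapsto \zeta/(\zeta^2-x)$ is singular when $\zeta^2=x$. Fix a compact $K\Subset\C\setminus[-b,b]$. The map $\zeta\mapsto\zeta^2$ sends $\C\setminus[-b,b]$ into $\C\setminus[0,b^2]$, so by compactness there is $\delta>0$ such that
\[
\dist(\zeta^2,[0,b^2+\delta])\ge\delta\quad\text{for all }\zeta\in K
\]
(shrinking $\delta$ if necessary). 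On the event $\calE_n:=\{\norm{X_n}_{\op}^2\le b^2+\delta/2\}$, which has probability tending to one by \cref{ass:global-noise}, both $\mu_{X_nX_n^*}$ and $\mu$ are supported in $[0,b^2+\delta/2]$. On this set the integrands $f_\zeta(x):=\zeta/(\zeta^2-x)$ for $\zeta\in K$ are uniformly bounded by $\sup_K|\zeta|/\delta$ and uniformly Lipschitz in $x$, with constant at most $\sup_K|\zeta|/\delta^2$.

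Next, upgrade pointwise convergence to uniform convergence. Replace each $f_\zeta$ by a bounded continuous extension to all of $\R$, for instance by composing with the projection onto $[0,b^2+\delta/2]$. Weak convergence in probability then gives $\int f_\zeta\,\dd\mu_{X_nX_n^*}\to\int f_\zeta\,\dd\mu$ for each fixed $\zeta$. For uniformity, note that the family $\{f_\zeta\}_{\zeta\in K}$ is equi-Lipschitz in $\zeta$ as well, since $|\partial_\zeta f_\zeta|\le C_{K,\delta}$ on the support. Cover $K$ by a finite $\eta$-net $\{\zeta_1,\ldots,\zeta_L\}$. The finitely many net points converge jointly in probability, and the net error is $O(\eta)$ uniformly in $n$ on $\calE_n$. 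Letting $\eta\downarrow0$ gives $\sup_K|\phi_n-\phi|\to0$ in probability. (Alternatively, one can argue via Vitali/Montel for holomorphic functions along subsequences, but the net argument avoids any subsequence extraction in probability.)

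The tilde part follows from the identity in \eqref{eq:phitilde-n}:
\[
\widetilde\phi_n-\widetilde\phi=\tfrac nm(\phi_n-\phi)+\bigl(\tfrac nm-c\bigr)\Bigl(\phi-\tfrac1\zeta\Bigr).
\]
Both $\phi$ and $1/\zeta$ are bounded on $K$ (note $0\notin K$), and $n/m\to c$, so the second term vanishes uniformly and the first is handled by the previous step. The identity itself comes from the fact that $X_n^*X_n$ has the same nonzero eigenvalues as $X_nX_n^*$ plus $m-n$ zeros. The only delicate point in the whole argument is the first step: the limiting edge $b$ is only reached in probability. It is therefore essential to work on the event $\calE_n$, so that no eigenvalue of $X_nX_n^*$ approaches $\zeta^2$ for $\zeta\in K$. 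This is exactly where the operator-norm convergence in \eqref{eq:global-law}, and not merely weak convergence, is used.
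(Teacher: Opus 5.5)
Your proof is correct and follows essentially the same route as the paper. Both arguments restrict to the high-probability event on which $\norm{X_n}_{\op}$ stays a fixed distance from $K$, apply weak convergence at the points of a finite net of $K$ to the uniformly bounded, equicontinuous family $x\mapsto\zeta/(\zeta^2-x)$, fill the gaps by Lipschitz continuity in $\zeta$, and obtain the $\widetilde\phi$ statement from the trace identity \eqref{eq:phitilde-n}; your version simply supplies details the paper leaves implicit (the explicit $\delta$, the bounded continuous extension, and the exact decomposition of $\widetilde\phi_n-\widetilde\phi$).
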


\begin{proof}
On the event that $\norm{X_n}_{\op}$ stays a fixed positive distance from $K$, the functions $x\mapsto \zeta/(\zeta^2-x)$ form a uniformly bounded and equicontinuous family on a common compact interval containing the spectra of $X_nX_n^*$.  Weak convergence of the empirical measures therefore gives uniform convergence on a finite net in $K$, and resolvent Lipschitz bounds extend it to all of $K$.  The identity in \eqref{eq:phitilde-n} gives the second convergence.
\end{proof}

\subsection{Rank inequality and the master equation}
The first lemma is the reason sublinear rank leaves the global picture alone: a low-rank change can move individual eigenvalues wherever it likes, but it cannot move the distribution function by more than the rank.  This is what makes the outliers a genuinely separate question from the bulk.

\begin{lemma}[Rank inequality]\label{lem:rank-inequality}
If $A$ and $B$ are Hermitian $N\times N$ matrices, then
\[
  \norm{F_A-F_B}_\infty\le\frac{\rank(A-B)}N.
\]
Consequently,
\[
 \norm{F_{W_nW_n^*}-F_{X_nX_n^*}}_\infty\le\frac{2r_n}{n}.
\]
\end{lemma}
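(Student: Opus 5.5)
The plan is to prove the abstract inequality first, via Cauchy interlacing, and then specialize it. Let $k=\rank(A-B)$. Write $A-B=D$; since $D$ is Hermitian of rank $k$, its kernel $S$ has dimension $N-k$. Compressing to $S$, the matrices $A$ and $B$ agree: $Q^*AQ=Q^*BQ=:T$, where $Q$ is an isometry onto $S$. By Cauchy interlacing applied to a compression of codimension $k$, $\lambda_{j+k}(A)\le\lambda_j(T)\le\lambda_j(A)$ in decreasing order, and the same holds for $B$. Hence $\lambda_{j+k}(A)\le\lambda_j(B)$ and $\lambda_{j+k}(B)\le\lambda_j(A)$ for all admissible $j$.

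These shifted inequalities translate directly into counts. For every $x$, $\#\{j:\lambda_j(A)>x\}\le\#\{j:\lambda_j(B)>x\}+k$, because if $\lambda_{i}(A)>x$ with $i>k$, then $\lambda_{i-k}(B)\ge\lambda_i(A)>x$. By symmetry the same bound holds with $A$ and $B$ exchanged. Since $N F_A(x)=N-\#\{j:\lambda_j(A)>x\}$, we obtain $N|F_A(x)-F_B(x)|\le k$ for every $x$, which is the first claim.

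For the consequence, take $A=W_nW_n^*$ and $B=X_nX_n^*$, both $n\times n$. Then
\[
A-B=X_nP_n^*+P_nX_n^*+P_nP_n^*.
\]
The column space of the first term lies in $\operatorname{range}(X_nV_n)$, while the column spaces of the second and third terms lie in $\operatorname{range}(U_n)$. The sum therefore has range contained in a space of dimension at most $2r_n$, so $\rank(A-B)\le 2r_n$. Dividing by $N=n$ gives the stated bound.

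No step is a real obstacle. The only point needing care is the rank count for $A-B$, which must be done by bounding the dimension of the range rather than naively summing ranks; naive summation would give $3r_n$.
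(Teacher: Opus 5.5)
Your proof is correct and follows the paper's route. The paper also derives the first inequality from interlacing; you supply the compression-to-$\ker(A-B)$ details that it leaves as standard. It bounds the rank through $W_nW_n^*-X_nX_n^*=P_nW_n^*+X_nP_n^*$, which is your range-of-$U_n$ plus range-of-$X_nV_n$ count in grouped form.
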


\begin{proof}
The first statement is the standard interlacing consequence of the min--max principle.  Since
\[
 W_nW_n^*-X_nX_n^*=P_nW_n^*+X_nP_n^*,
\]
the rank of the difference is at most $2r_n$.
\end{proof}

\begin{proof}[Proof of \cref{thm:global-law}]
Apply \cref{lem:rank-inequality} and \cref{ass:global-noise}.  The nonzero eigenvalues of $W_n^*W_n$ and $W_nW_n^*$ agree, while $W_n^*W_n$ has $m-n$ additional zeros.  This gives the stated right-Gram limit.
\end{proof}

The next lemma packages the determinant, counting, and resolvent identities used throughout.

\begin{lemma}[Linearized master equation]\label{lem:master-equation}
Let $\zeta\notin\spec(\calX_n)$ and define
\[
   \calF_n(\zeta):=\Id_{2r_n}-\calG_n(\zeta)\calK_n.
\]
Then
\begin{equation}\label{eq:det-master}
  \det(\zeta\Id-\calW_n)
  =\det(\zeta\Id-\calX_n)\det\calF_n(\zeta).
\end{equation}
Moreover,
\begin{equation}\label{eq:compressed-W-resolvent}
  \calE_n^*(\zeta\Id-\calW_n)^{-1}\calE_n
  =\calF_n(\zeta)^{-1}\calG_n(\zeta)
\end{equation}
whenever either side is defined.

For real $z>\norm{X_n}_{\op}$, $\calG_n(z)$ is positive definite and
\begin{equation}\label{eq:inertia-count}
  \#\{j:\sigma_j(W_n)>z\}
  =\#\left\{\lambda\in\spec\bigl(\calG_n(z)^{1/2}\calK_n\calG_n(z)^{1/2}\bigr):\lambda>1\right\}.
\end{equation}
\end{lemma}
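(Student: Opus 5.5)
The plan is to derive all three identities from the factorization $\calP_n=\calE_n\calK_n\calE_n^*$ together with the invertibility of $\zeta\Id-\calX_n$.

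\textbf{Determinant identity.} For \eqref{eq:det-master}, write
\[
\zeta\Id-\calW_n=(\zeta\Id-\calX_n)\bigl(\Id-\calR_{X,n}(\zeta)\calE_n\calK_n\calE_n^*\bigr).
\]
Sylvester's identity $\det(\Id_N-AB)=\det(\Id_{2r_n}-BA)$, applied with $A=\calR_{X,n}\calE_n\calK_n$ and $B=\calE_n^*$, then gives
\[
\det(\zeta\Id-\calW_n)=\det(\zeta\Id-\calX_n)\det\bigl(\Id-\calE_n^*\calR_{X,n}\calE_n\calK_n\bigr)=\det(\zeta\Id-\calX_n)\det\calF_n(\zeta).
\]

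\textbf{Compressed resolvent.} For \eqref{eq:compressed-W-resolvent}, the Woodbury formula gives
\[
(\zeta-\calW_n)^{-1}=\calR_{X,n}+\calR_{X,n}\calE_n\calK_n(\Id-\calG_n\calK_n)^{-1}\calE_n^*\calR_{X,n}.
\]
The simplest way to check this is to multiply by $\zeta-\calW_n$ and simplify. Compressing by $\calE_n^*$ on the left and $\calE_n$ on the right yields
\[
\calG_n+\calG_n\calK_n\calF_n^{-1}\calG_n=\bigl(\Id+\calG_n\calK_n\calF_n^{-1}\bigr)\calG_n=\calF_n^{-1}\calG_n,
\]
where the last step uses $\Id+\calG_n\calK_n(\Id-\calG_n\calK_n)^{-1}=(\Id-\calG_n\calK_n)^{-1}$. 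By the determinant identity, $\zeta\notin\spec(\calW_n)$ is equivalent to $\calF_n(\zeta)$ being invertible. This is how I would interpret ``whenever either side is defined'': both sides are defined on the same set, given that $\zeta\notin\spec\calX_n$.

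\textbf{Positivity of $\calG_n(z)$.} For real $z>\norm{X_n}_{\op}$, every eigenvalue $\lambda$ of $\calX_n$ satisfies $|\lambda|\le\norm{X_n}_{\op}<z$. Hence $z\Id-\calX_n$ is positive definite, so its inverse is too, and the compression $\calG_n(z)$ is positive definite because $\calE_n$ has orthonormal columns.

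\textbf{Inertia count.} This is the main step. I would use a Haynsworth/Schur-complement inertia argument. Consider the Hermitian block matrix
\[
\mathcal B=\begin{pmatrix} z-\calX_n & \calE_n\\ \calE_n^* & \calK_n^{+}\end{pmatrix}.
\]
The difficulty is that $\calK_n$ need not be invertible in a way that makes this clean. Its eigenvalues are $\pm\theta_{n,i}$, which are nonzero since all $\theta_{n,i}>0$, so $\calK_n$ is in fact invertible and $\calK_n^{+}=\calK_n^{-1}$.

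I will compute the inertia of $\mathcal B$ in two ways.
\begin{itemize}
\item Taking the Schur complement of the top-left block, $\mathcal B$ is congruent to $(z-\calX_n)\oplus(\calK_n^{-1}-\calG_n(z))$.
\item Taking the Schur complement of the bottom-right block, $\mathcal B$ is congruent to $(z-\calX_n-\calE_n\calK_n\calE_n^*)\oplus\calK_n^{-1}=(z-\calW_n)\oplus\calK_n^{-1}$.
\end{itemize}
Since $z-\calX_n>0$ and $\calK_n^{-1}$ has exactly $r_n$ negative eigenvalues (the eigenvalues of $\calK_n$ are $\pm\theta_{n,i}$), comparing negative indices gives
\[
\#\{\spec\calW_n>z\}+r_n=\mathrm{neg}\bigl(\calK_n^{-1}-\calG_n\bigr).
\]
Here $\mathrm{neg}(z-\calW_n)=\#\{\spec\calW_n>z\}$, and the latter equals $\#\{j:\sigma_j(W_n)>z\}$ because $z>0$.

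Next, $\calK_n^{-1}-\calG_n$ is congruent via $\calK_n$ to $\calK_n-\calK_n\calG_n\calK_n$. Alternatively, writing $\calG_n=\calG_n^{1/2}\calG_n^{1/2}$, the matrix $\calK_n^{-1}-\calG_n$ is congruent to $\calG_n^{-1/2}\calK_n^{-1}\calG_n^{-1/2}-\Id=S^{-1}-\Id$, where $S=\calG_n^{1/2}\calK_n\calG_n^{1/2}$. The matrix $S$ is invertible, and by Sylvester's law of inertia it has exactly $r_n$ negative eigenvalues.

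The eigenvalues $s$ of $S$ then contribute to $\mathrm{neg}(S^{-1}-\Id)$ as follows:
\begin{itemize}
\item every negative $s$ counts, since $1/s-1<0$;
\item a positive $s$ counts exactly when $1/s<1$, that is, when $s>1$.
\end{itemize}
Therefore $\mathrm{neg}=r_n+\#\{s>1\}$. Subtracting $r_n$ from both sides of the earlier identity gives \eqref{eq:inertia-count}.

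The main obstacle is this sign bookkeeping in the inertia step. In particular, one must check carefully that $\calK_n$ and $S$ each have exactly $r_n$ negative eigenvalues, so that the $r_n$ terms cancel.
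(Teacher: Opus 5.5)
Your proof is correct. The determinant and compressed-resolvent parts are the same as the paper's: the matrix determinant lemma, then Woodbury followed by the identity $\calG+\calG\calK(\Id-\calG\calK)^{-1}\calG=(\Id-\calG\calK)^{-1}\calG$.

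The inertia step takes a genuinely different route. The paper uses a single congruence. With $Y=(z\Id-\calX_n)^{-1/2}\calE_n$ it writes
\[
z\Id-\calW_n=(z\Id-\calX_n)^{1/2}\bigl[\Id-Y\calK_nY^*\bigr](z\Id-\calX_n)^{1/2}.
\]
Hence the negative index of $z\Id-\calW_n$ equals the number of eigenvalues of $Y\calK_nY^*$ above one. Since $Y^*Y=\calG_n(z)$, the nonzero eigenvalues of $Y\calK_nY^*$ coincide, with multiplicity, with those of $\calG_n(z)\calK_n$. That matrix is similar to $\calG_n(z)^{1/2}\calK_n\calG_n(z)^{1/2}$.

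You instead compute the inertia of the bordered matrix $\mathcal B$ through its two Schur complements. You then cancel the $r_n$ negative eigenvalues of $\calK_n^{-1}$ against the $r_n$ negative eigenvalues of $S$.

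Your argument stays entirely within Sylvester's law of inertia and Haynsworth additivity, and never uses the fact that $AB$ and $BA$ share nonzero spectra. Its intermediate identity
\[
\operatorname{neg}(z\Id-\calW_n)+r_n=\operatorname{neg}\bigl(\calK_n^{-1}-\calG_n(z)\bigr)
\]
is also a convenient form of the count. However, it requires $\calK_n$ to be invertible, which holds here only because every $\theta_{n,i}>0$, and it needs the extra sign bookkeeping you flagged. The paper's congruence is shorter and works verbatim for any Hermitian $\calK_n$, for instance if some strengths were allowed to vanish.
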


\begin{proof}
The matrix determinant lemma applied to $\calW_n=\calX_n+\calE_n\calK_n\calE_n^*$ gives \eqref{eq:det-master}.  Woodbury's identity gives
\[
 (\zeta\Id-\calW_n)^{-1}
 =(\zeta\Id-\calX_n)^{-1}
 +(\zeta\Id-\calX_n)^{-1}\calE_n\calK_n
 \calF_n(\zeta)^{-1}\calE_n^*(\zeta\Id-\calX_n)^{-1}.
\]
After compression, the identity
\[
  \calG+\calG\calK(\Id-\calG\calK)^{-1}\calG
  =(\Id-\calG\calK)^{-1}\calG
\]
yields \eqref{eq:compressed-W-resolvent}.

For $z>\norm{X_n}_{\op}$, the matrix $z\Id-\calX_n$ is positive definite.  Congruence shows that the number of negative eigenvalues of $z\Id-\calW_n$ equals the number of eigenvalues above one of
\[
  (z\Id-\calX_n)^{-1/2}\calE_n\calK_n\calE_n^*(z\Id-\calX_n)^{-1/2}.
\]
Its nonzero eigenvalues agree with those of
$\calG_n(z)^{1/2}\calK_n\calG_n(z)^{1/2}$.  Positive eigenvalues of $\calW_n$ are the singular values of $W_n$, which proves \eqref{eq:inertia-count}.
\end{proof}

\subsection{A weighted square-root comparison}
The real-axis argument compares two Hermitian matrices.  The following formulation explains why the weighted compatibility condition is the natural one when spike strengths are allowed to grow.

\begin{lemma}[Weighted square-root perturbation]\label{lem:weighted-square-root}
Let $G$ and $M$ be positive definite $2r\times2r$ matrices whose spectra lie in a fixed compact subset of $(0,\infty)$.  Suppose that $M=\diag(a\Id_r,b\Id_r)$ with $a,b>0$, and let
\[
 K=\begin{pmatrix}0&\Theta\\\Theta&0\end{pmatrix}
\]
with $\Theta\ge0$ diagonal.  Then
\begin{equation}\label{eq:weighted-sqrt-bound}
 \norm{G^{1/2}KG^{1/2}-M^{1/2}KM^{1/2}}_{\op}
 \le C\left(
   \norm{(G-M)K}_{\op}+\norm{K(G-M)}_{\op}
 \right),
\end{equation}
where $C$ depends only on the spectral bounds for $G$ and $M$.
\end{lemma}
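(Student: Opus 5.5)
The plan is to expand around $M^{1/2}$ and move $K$ past the resolvents of $M$, whose block-scalar structure lets them commute with $K$ up to swapping the two diagonal blocks. Set $D:=G^{1/2}-M^{1/2}$. Then
\[
G^{1/2}KG^{1/2}-M^{1/2}KM^{1/2}=DKM^{1/2}+M^{1/2}KD+DKD .
\]
Since $\norm{M^{1/2}}_{\op}$ and $\norm{D}_{\op}$ are bounded by constants depending only on the spectral bounds, it suffices to show
\[
\norm{DK}_{\op}\le C\norm{(G-M)K}_{\op},\qquad \norm{KD}_{\op}\le C\norm{K(G-M)}_{\op}.
\]
The term $DKD$ is then controlled by $\norm{DK}_{\op}\norm{D}_{\op}$.

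\textbf{Step 1: an integral formula for $D$.} Use $A^{1/2}=\frac1\pi\int_0^\infty t^{-1/2}A(t+A)^{-1}\dd t$ for positive definite $A$. Since $A(t+A)^{-1}=\Id-t(t+A)^{-1}$, the resolvent identity gives two orderings:
\[
D=\frac1\pi\int_0^\infty t^{1/2}(t+G)^{-1}(G-M)(t+M)^{-1}\dd t
=\frac1\pi\int_0^\infty t^{1/2}(t+M)^{-1}(G-M)(t+G)^{-1}\dd t .
\]

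\textbf{Step 2: the commutation trick.} This is the only step where the structure of $M$ and $K$ enters. In general $(t+M)^{-1}$ does not commute with $K$. Let $M':=\diag(b\Id_r,a\Id_r)$ be $M$ with its two blocks swapped. Because $\Theta$ is diagonal and the blocks of $M$ are scalar, a direct block computation gives
\[
(t+M)^{-1}K=K(t+M')^{-1},\qquad K(t+M)^{-1}=(t+M')^{-1}K .
\]

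\textbf{Step 3: bounding $DK$ and $KD$.} For $DK$, use the first ordering in Step 1 and Step 2:
\[
DK=\frac1\pi\int_0^\infty t^{1/2}(t+G)^{-1}\,(G-M)K\,(t+M')^{-1}\dd t .
\]
For $KD$, use the second ordering:
\[
KD=\frac1\pi\int_0^\infty t^{1/2}(t+M')^{-1}\,K(G-M)\,(t+G)^{-1}\dd t .
\]
Let all spectra lie in $[\lambda_-,\lambda_+]\subset(0,\infty)$; $M'$ has the same spectrum as $M$. Then
\[
\norm{DK}_{\op}\le\norm{(G-M)K}_{\op}\cdot\frac1\pi\int_0^\infty\frac{t^{1/2}}{(t+\lambda_-)^2}\dd t ,
\]
and the integral is finite, behaving like $t^{-3/2}$ at infinity. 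The bound for $\norm{KD}_{\op}$ is identical.

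The conceptual obstacle is that $K$ may be unbounded, so the naive route of bounding $\norm{D}_{\op}\le C\norm{G-M}_{\op}$ and multiplying by $\norm{K}_{\op}$ is unavailable. The commutation identity of Step 2, combined with choosing the ordering of the resolvent identity so that $K$ always sits next to an $M$-resolvent, is what routes all of $K$ onto the weighted quantities $(G-M)K$ and $K(G-M)$. The remaining estimates are routine.
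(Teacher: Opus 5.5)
Your proposal is correct and is essentially the paper's proof: the same integral representation of $G^{1/2}-M^{1/2}$, the same block-swap identity $(t+M)^{-1}K=K(t+M^\sharp)^{-1}$ that routes $K$ onto $(G-M)K$ and $K(G-M)$, and the same final expansion. The differences are cosmetic. You expand into three terms including $DKD$, where the paper writes $(G^{1/2}-M^{1/2})KG^{1/2}+M^{1/2}K(G^{1/2}-M^{1/2})$. You also spell out the reversed ordering of the resolvent identity needed for $KD$, which the paper leaves as ``the same argument on the other side.''
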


\begin{proof}
Use the integral representation
\[
 G^{1/2}-M^{1/2}
 =\frac1\pi\int_0^\infty t^{1/2}(t+G)^{-1}(G-M)(t+M)^{-1}\,\dd t.
\]
If $M^\sharp:=\diag(b\Id_r,a\Id_r)$, then
\[
   (t+M)^{-1}K=K(t+M^\sharp)^{-1}.
\]
The uniform spectral bounds and the integral formula therefore give
\[
 \norm{(G^{1/2}-M^{1/2})K}_{\op}\le C\norm{(G-M)K}_{\op}.
\]
Applying the same argument on the other side gives
\[
 \norm{K(G^{1/2}-M^{1/2})}_{\op}\le C\norm{K(G-M)}_{\op}.
\]
Now expand
\begin{align*}
 G^{1/2}KG^{1/2}-M^{1/2}KM^{1/2}
 &=(G^{1/2}-M^{1/2})KG^{1/2}\\
 &\quad+M^{1/2}K(G^{1/2}-M^{1/2})
\end{align*}
and use the boundedness of $G^{1/2}$ and $M^{1/2}$.
\end{proof}

\subsection{Proof of the value theorem}
For real $z>b$, define the scalar crossing functions
\begin{equation}\label{eq:scalar-crossing}
  g_{n,i}(z):=\theta_{n,i}\sqrt{d(z)}.
\end{equation}
The positive eigenvalues of the deterministic comparison matrix
\begin{equation}\label{eq:H0}
  \calH_n^{(0)}(z):=\calM(z)^{1/2}\calK_n\calM(z)^{1/2}
  =\sqrt{d(z)}\begin{pmatrix}0&\Theta_n\\\Theta_n&0\end{pmatrix}
\end{equation}
are exactly $g_{n,1}(z),\ldots,g_{n,r_n}(z)$.  A supercritical deterministic location is the unique solution of $g_{n,i}(z)=1$.

\begin{lemma}[Uniform slope away from the transition]\label{lem:uniform-slope}
Fix $\delta>0$ and $M<\infty$.  On every compact subinterval of $(b,\infty)$ containing $\rho([\theta_\star+\delta,M])$, there is $c_{\delta,M}>0$ such that
\begin{equation}\label{eq:slope}
   g_{n,i}'(z)\le-c_{\delta,M}
\end{equation}
whenever $\theta_{n,i}\in[\theta_\star+\delta,M]$.
\end{lemma}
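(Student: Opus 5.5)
The slope is computed directly. Since $g_{n,i}(z)=\theta_{n,i}\sqrt{d(z)}$ with $d>0$ on $(b,\infty)$,
\[
 g_{n,i}'(z)=\theta_{n,i}\,\frac{d'(z)}{2\sqrt{d(z)}}.
\]
The bound therefore reduces to two facts about the fixed deterministic function $d$ on a compact interval $I=[\alpha,\beta]\subset(b,\infty)$. The first is that $d'<0$ on $I$ and is bounded away from zero there. The second is that $\sqrt{d}$ is bounded above on $I$. Once these hold, together with $\theta_{n,i}\ge\theta_\star+\delta>0$, we may take
\[
 c_{\delta,M}=(\theta_\star+\delta)\min_I|d'|\big/\bigl(2\max_I\sqrt d\bigr).
\]
The upper bound $M$ is used only to make $I\supset\rho([\theta_\star+\delta,M])$ a compact subset of $(b,\infty)$. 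Since $\rho$ is continuous and increasing, $\rho(\theta_\star+\delta)>b$, so such an $I$ exists.

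To establish these properties, differentiate under the integral in \eqref{eq:phi-def}; this is legitimate because $\mu$ is compactly supported in $[0,b^2]$ and $z^2-x\ge\alpha^2-b^2>0$ on $I$. One finds
\[
 \phi'(z)=-\int\frac{z^2+x}{(z^2-x)^2}\,\mu(\dd x).
\]
This is negative, is bounded away from zero uniformly on $I$, and is continuous. Similarly
\[
 \widetilde\phi'(z)=c\,\phi'(z)-(1-c)z^{-2}<0.
\]
Then
\[
 d'=\phi'\widetilde\phi+\phi\widetilde\phi'.
\]
Both terms are strictly negative, because $\phi,\widetilde\phi>0$ for real $z>b$, as already noted after \eqref{eq:phi-def}. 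Since $d'$ is continuous on the compact set $I$, $\max_I d'<0$. Also $d\le d(\alpha)<\infty$ on $I$ by monotonicity.

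There is no genuine obstacle. The one point to watch is that the constant must not depend on $n$ or on the index $i$. This holds because $d$ is the limiting, $n$-independent transform, and the only dependence on the spike enters through the factor $\theta_{n,i}$, which is bounded below by $\theta_\star+\delta$. If one wants the slope only on the window where crossings actually occur, the same bound applies there, since that window lies inside $I$.
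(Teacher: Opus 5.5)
Your proposal is correct and follows the paper's proof essentially verbatim. The paper also writes $g_{n,i}'(z)=\theta_{n,i}d'(z)/(2\sqrt{d(z)})$ and concludes from positivity of $d$, continuity and strict negativity of $d'$ on the compact interval, and the lower bound $\theta_{n,i}\ge\theta_\star+\delta$; your differentiation under the integral just spells out the negativity of $d'$ that the paper records separately in \cref{lem:d-monotone}.
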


\begin{proof}
On a compact subset of $(b,\infty)$, $d$ is positive and $d'$ is continuous and strictly negative.  Since
\[
   g_{n,i}'(z)=\frac{\theta_{n,i}d'(z)}{2\sqrt{d(z)}},
\]
the claim follows from the lower bound $\theta_{n,i}\ge\theta_\star+\delta$.
\end{proof}

\begin{proof}[Proof of \cref{thm:value-matching}]
Choose a compact real interval $I\Subset(b,\infty)$ containing a fixed neighborhood of $\rho([\theta_\star+\delta,M])$.  On the high-probability event from the real subspace law, set
\[
   \calH_n(z):=\calG_n(z)^{1/2}\calK_n\calG_n(z)^{1/2}.
\]
By \cref{lem:weighted-square-root} and \cref{ass:weighted-compatibility},
\begin{equation}\label{eq:H-close}
   q_n:=\sup_{z\in I}\norm{\calH_n(z)-\calH_n^{(0)}(z)}_{\op}\longrightarrow0
\end{equation}
in probability.  Weyl's inequality therefore places every ordered eigenvalue of $\calH_n(z)$ within $q_n$ of the corresponding ordered eigenvalue of $\calH_n^{(0)}(z)$, uniformly in $z\in I$.

Fix $i\in I_{n,\delta,M}$ and write $\rho_{n,i}=\rho(\theta_{n,i})$.  By \cref{lem:uniform-slope}, for $L>2/c_{\delta,M}$ and all large $n$,
\[
  g_{n,i}(\rho_{n,i}-Lq_n)>1+q_n,
  \qquad
  g_{n,i}(\rho_{n,i}+Lq_n)<1-q_n.
\]
The inertia identity \eqref{eq:inertia-count} then implies that at least $i$ singular values lie above the first point and at most $i-1$ lie above the second point.  Hence
\[
  |\sigma_i(W_n)-\rho_{n,i}|\le Lq_n.
\]
This count-function argument does not compare neighboring gaps and remains valid when any number of deterministic locations coincide or lie within $o(q_n)$ of one another.

For the no-extra estimate, apply the same slope argument to the envelope $g_{n,\mathrm{env}}(z):=(\theta_\star+\delta)\sqrt{d(z)}$.  After enlarging $L$ if necessary, at $z=\rho(\theta_\star+\delta)+Lq_n$ every scalar crossing with index greater than $k_{n,\delta}$ is below $1-q_n$; the negative scalar eigenvalues are also below one.  Weyl's inequality and \eqref{eq:inertia-count} therefore show that at most $k_{n,\delta}$ singular values exceed this level, proving \eqref{eq:no-extra}.  Taking a deterministic envelope for $Lq_n$ yields $\eta_n(\delta,M)$.
\end{proof}

\begin{proof}[Proof of \cref{cor:exact-separation}]
At the fixed point $z_0$, the positive eigenvalues of the scalar comparison matrix are
\[
   g_{n,i}(z_0)=\frac{\theta_{n,i}}{\vartheta_0}.
\]
By \cref{lem:weighted-square-root} and weighted compatibility,
\[
  \norm{\calH_n(z_0)-\calH_n^{(0)}(z_0)}_{\op}\longrightarrow0
\]
in probability.  On the spike-gap event, every positive scalar eigenvalue is at distance at least $\gamma/\vartheta_0$ from one, while every negative scalar eigenvalue is at distance at least $1$ from one.  Thus the whole scalar spectrum stays at distance at least $\min\{\gamma/\vartheta_0,1\}$ from one.  Weyl's inequality therefore preserves exactly the number of eigenvalues above one.  The inertia identity \eqref{eq:inertia-count} gives \eqref{eq:exact-level}.  Window and finite-union statements follow by subtraction.
\end{proof}

\begin{proof}[Proof of \cref{cor:no-critical-outlier}]
Fix $\alpha>0$ and set $z=b+\alpha$.  Strict monotonicity gives
\[
  \theta_\star^2d(z)<\theta_\star^2d(b+)=1.
\]
Hence, with probability tending to one,
\[
  \theta_{n,1}\sqrt{d(z)}\le1-\gamma
\]
for some $\gamma>0$.  By \cref{lem:weighted-square-root}, the real subspace law, and weighted compatibility at this fixed $z$,
\[
  \norm{\calH_n(z)-\calH_n^{(0)}(z)}_{\op}\longrightarrow0
\]
in probability.  Thus every eigenvalue of $\calH_n(z)$ is below one with probability tending to one, and \eqref{eq:inertia-count} gives $\norm{W_n}_{\op}\le b+\alpha$.  The global law from \cref{thm:global-law} gives $\norm{W_n}_{\op}\ge b-o_{\Pp}(1)$.  Letting $\alpha\downarrow0$ proves the claim.
\end{proof}

\subsection{Contour transfer and Riesz projectors}
The real-axis argument is ideal for locations.  Singular vectors are more naturally encoded by the deformed resolvent on a contour.

\begin{proposition}[Stable contour transfer]\label{prop:contour-transfer}
Assume the holomorphic subspace law and weighted compatibility on a compact neighborhood of a stable contour $\Gamma$.  Let $\Omega_\Gamma$ be the region enclosed by $\Gamma$.  With probability tending to one:
\begin{enumerate}
  \item $\calW_n$ and the scalar master equation
  \begin{equation}\label{eq:scalar-master}
     \det(\Id_{2r_n}-\calM(\zeta)\calK_n)=0
  \end{equation}
  have the same number of eigenvalues or roots in $\Omega_\Gamma$, counted with algebraic multiplicity;
  \item uniformly on $\Gamma$,
  \begin{equation}\label{eq:compressed-resolvent-transfer}
    \calE_n^*(\zeta\Id-\calW_n)^{-1}\calE_n
    -\bigl(\Id_{2r_n}-\calM(\zeta)\calK_n\bigr)^{-1}\calM(\zeta)
    \longrightarrow0
  \end{equation}
  in operator norm.
\end{enumerate}
\end{proposition}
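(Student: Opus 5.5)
We work throughout on the high-probability event on which three things hold: $\norm{X_n}_{\op}$ stays a fixed distance from the compact neighborhood $K\supset\Gamma$ (from \cref{ass:global-noise}, since $\overline{\Omega_\Gamma}\Subset\C\setminus[-b,b]$); the holomorphic law \eqref{eq:holo-SLL} holds on $K$; and the weighted error in \eqref{eq:weighted-error} is at most some deterministic $\eps'_n\downarrow0$.

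Write
\[
\calF_n^{(0)}(\zeta):=\Id_{2r_n}-\calM(\zeta)\calK_n, \qquad \Delta_n(\zeta):=(\calG_n(\zeta)-\calM(\zeta))\calK_n,
\]
so that $\calF_n=\calF_n^{(0)}-\Delta_n$. Let $S:=\sup_n\sup_{\Gamma}\norm{(\calF_n^{(0)})^{-1}}_{\op}<\infty$ be the stability constant from \eqref{eq:contour-stability}.

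\textbf{Step 1 (invertibility on $\Gamma$).} Once $\sup_\Gamma\norm{\Delta_n}_{\op}\le\eps'_n<1/(2S)$, a Neumann series gives
\[
\calF_n(\zeta)^{-1}=(\Id-(\calF_n^{(0)})^{-1}\Delta_n)^{-1}(\calF_n^{(0)})^{-1},
\]
with $\norm{\calF_n^{-1}}_{\op}\le 2S$ and $\norm{\calF_n^{-1}-(\calF_n^{(0)})^{-1}}_{\op}\le 2S^2\eps'_n$ uniformly on $\Gamma$.

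\textbf{Step 2 (item (2)).} By \eqref{eq:compressed-W-resolvent},
\[
\calE_n^*(\zeta-\calW_n)^{-1}\calE_n-(\calF_n^{(0)})^{-1}\calM=(\calF_n^{-1}-(\calF_n^{(0)})^{-1})\calG_n+(\calF_n^{(0)})^{-1}(\calG_n-\calM).
\]
Both terms are controlled by Step 1 and the unweighted law, using that $\calG_n$ is uniformly bounded on $K$: its norm is at most $\dist(K,\spec\calX_n)^{-1}$. This step does not need $\Theta_n$ bounded. Large spikes enter only through $\Delta_n$, which the weighted condition handles.

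\textbf{Step 3 (item (1), counting).} By \eqref{eq:det-master}, and since $\det(\zeta-\calX_n)$ has no zeros in $\overline{\Omega_\Gamma}$, the eigenvalues of $\calW_n$ in $\Omega_\Gamma$ are the zeros of $\det\calF_n$, with multiplicities. Similarly, the roots of \eqref{eq:scalar-master} are the zeros of $\det\calF_n^{(0)}$. Both are holomorphic on a neighborhood of $\overline{\Omega_\Gamma}$.

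Consider the homotopy
\[
\calF_n^{(t)}:=\calF_n^{(0)}-t\Delta_n, \qquad t\in[0,1].
\]
By Step 1, $\calF_n^{(t)}(\zeta)$ is invertible for every $\zeta\in\Gamma$ and every $t$. Hence $h(t):=\frac1{2\pi\ii}\oint_\Gamma \partial_\zeta\log\det\calF_n^{(t)}\,\dd\zeta$ is well defined. Using Jacobi's formula,
\[
h(t)=\frac1{2\pi\ii}\oint_\Gamma\tr\bigl((\calF_n^{(t)})^{-1}\partial_\zeta\calF_n^{(t)}\bigr)\dd\zeta .
\]
This is integer-valued and continuous in $t$, so it is constant. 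Therefore $h(0)=h(1)$, which is exactly item (1).

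\textbf{Main obstacle.} The obstacle is that the dimension $2r_n$ grows. A naive Rouché comparison of $\det\calF_n$ against $\det\calF_n^{(0)}$ would need
\[
|\det\calF_n-\det\calF_n^{(0)}|<|\det\calF_n^{(0)}|,
\]
and this can fail, because determinant perturbations can grow like $(1+S\eps'_n)^{2r_n}$. The homotopy avoids this: it uses only operator-norm invertibility along the whole path, which Step 1 supplies uniformly in $t$ and independently of $r_n$.

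Two technical points remain for continuity of $h$ in $t$:
\begin{itemize}
\item $\partial_\zeta\Delta_n$ must be bounded on $\Gamma$, at least for fixed $n$. This holds because $\Delta_n$ is holomorphic on a neighborhood of $\Gamma$ for each fixed $n$, even though $\calK_n$ may be large.
\item $\Gamma$ must be a finite union of rectifiable Jordan curves. This is part of \cref{def:stable-contour}.
\end{itemize}

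Since the count is an integer for each $n$, no uniformity in $n$ is needed for Step 3 beyond Step 1.
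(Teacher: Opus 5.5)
Your proposal is correct and takes essentially the same route as the paper: you factor $\calF_n=\calF_n^{(0)}\bigl[\Id-(\calF_n^{(0)})^{-1}(\calG_n-\calM)\calK_n\bigr]$, run the homotopy $\calM+t(\calG_n-\calM)$ with the argument principle to preserve the enclosed-root count, and split the compressed resolvent difference via the resolvent identity. Your explicit Neumann constants and your remark on why a direct Rouch\'e comparison of $2r_n$-dimensional determinants could fail add detail the paper leaves implicit, but they do not change the argument.
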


\begin{proof}
Set
\[
  \calF_n^{(0)}(\zeta):=\Id_{2r_n}-\calM(\zeta)\calK_n.
\]
On $\Gamma$,
\[
  \calF_n(\zeta)
  =\calF_n^{(0)}(\zeta)
   \left[\Id_{2r_n}
   -\calF_n^{(0)}(\zeta)^{-1}\bigl(\calG_n(\zeta)-\calM(\zeta)\bigr)\calK_n\right].
\]
Stability and weighted compatibility make the norm of the second term inside brackets $o(1)$, uniformly on $\Gamma$.  The same is true along the homotopy $\calM+t(\calG_n-\calM)$, $0\le t\le1$.  Thus the master matrix remains invertible on $\Gamma$ throughout the homotopy.  The argument principle shows that the number of zeros inside is unchanged.  Since the noise spectrum lies inside a vanishing neighborhood of $[-b,b]$, \eqref{eq:det-master} identifies those zeros with eigenvalues of $\calW_n$ inside $\Gamma$.

For the resolvent, use \eqref{eq:compressed-W-resolvent}.  The resolvent identity gives
\begin{align*}
 \calF_n^{-1}\calG_n-(\calF_n^{(0)})^{-1}\calM
 &=(\calF_n^{-1}-(\calF_n^{(0)})^{-1})\calG_n
   +(\calF_n^{(0)})^{-1}(\calG_n-\calM),\\
 \calF_n^{-1}-(\calF_n^{(0)})^{-1}
 &=\calF_n^{-1}(\calG_n-\calM)\calK_n(\calF_n^{(0)})^{-1}.
\end{align*}
The inverses are uniformly bounded on $\Gamma$, so the unweighted and weighted local-law errors give \eqref{eq:compressed-resolvent-transfer}.
\end{proof}

The scalar matrix in \eqref{eq:scalar-master} splits into independent $2\times2$ blocks.  For a spike $\theta$,
\begin{equation}\label{eq:scalar-block-inverse}
 \biggl[
 \Id_2-
 \begin{pmatrix}\phi&0\\0&\widetilde\phi\end{pmatrix}
 \begin{pmatrix}0&\theta\\\theta&0\end{pmatrix}
 \biggr]^{-1}
 \begin{pmatrix}\phi&0\\0&\widetilde\phi\end{pmatrix}
 =\frac1{1-\theta^2d}
 \begin{pmatrix}
   \phi&\theta d\\
   \theta d&\widetilde\phi
 \end{pmatrix}.
\end{equation}
At the positive root $\rho=\rho(\theta)$, the residue of this matrix is
\begin{equation}\label{eq:residue-block}
  \frac1{-\theta^2d'(\rho)}
  \begin{pmatrix}
    \phi(\rho)&\theta d(\rho)\\
    \theta d(\rho)&\widetilde\phi(\rho)
  \end{pmatrix}
  =\frac12
  \begin{pmatrix}
    \cL(\theta)&\cLR(\theta)\\
    \cLR(\theta)&\cR(\theta)
  \end{pmatrix}.
\end{equation}

That residue is the whole content of the overlap formulas, and it is worth pausing on what it says: the same $2\times2$ matrix carries the left weight, the right weight, and the cross term, and it is symmetric.  The two sides of the rectangle are not being treated separately and then compared---they come out of one computation.

The next lemma is the technical price of doing the contour integral for the empirical laws rather than on a fixed contour.  There we must integrate over a strip that is allowed to approach the real axis, where the resolvent blows up like the reciprocal of the distance, and we need to know that the blow-up is no worse than that \emph{uniformly in the spike strength}---including for strengths so large that the corresponding root has run far away.  The proof is a small dilation trick: the scalar $2\times2$ object is realized as a compression of the resolvent of a genuine self-adjoint operator, after which the bound is the spectral theorem.

\begin{lemma}[Uniform off-axis scalar bounds]\label{lem:off-axis-scalar}
Let $K\Subset(b,\infty)$ be a compact interval.  For $\theta\ge0$, set
\[
 T_\theta(\zeta):=
 \frac1{1-\theta^2d(\zeta)}
 \begin{pmatrix}
   \phi(\zeta)&\theta d(\zeta)\\
   \theta d(\zeta)&\widetilde\phi(\zeta)
 \end{pmatrix}.
\]
There are $y_0,C_K>0$ such that, uniformly for $x\in K$, $0<|y|\le y_0$, and $\theta\ge0$,
\begin{equation}\label{eq:off-axis-scalar-lemma}
  \norm{T_\theta(x+\ii y)}_{\op}\le\frac1{|y|},
  \qquad
  \norm{\left[\Id_2-
  \begin{pmatrix}\phi(x+\ii y)&0\\0&\widetilde\phi(x+\ii y)\end{pmatrix}
  \begin{pmatrix}0&\theta\\\theta&0\end{pmatrix}\right]^{-1}}_{\op}
  \le\frac{C_K}{|y|}.
\end{equation}
\end{lemma}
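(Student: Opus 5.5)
The plan is to realize the scalar $2\times2$ object $T_\theta$ exactly as a compression of the resolvent of a bounded self-adjoint operator, as the paragraph before the lemma suggests. The first bound in \eqref{eq:off-axis-scalar-lemma} then follows from the spectral theorem, with no restriction on $\theta$ or $y$. The second bound follows by factoring out $\calM$.

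\emph{Step 1: a dilation of $\calM$.} Write $\phi$ as a Stieltjes transform:
\[
 \phi(\zeta)=\int\frac{\zeta}{\zeta^2-x}\,\mu(\dd x)=\int\frac12\Bigl(\frac1{\zeta-\sqrt x}+\frac1{\zeta+\sqrt x}\Bigr)\mu(\dd x)=\int\frac{\nu_1(\dd t)}{\zeta-t}.
\]
Here $\nu_1$ is the symmetrization of the push-forward of $\mu$ under $x\mapsto\sqrt x$. It is a probability measure supported in $[-b,b]$. Likewise $\widetilde\phi(\zeta)=\int(\zeta-t)^{-1}\nu_2(\dd t)$ with $\nu_2:=c\nu_1+(1-c)\delta_0$, again a probability measure on $[-b,b]$. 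On $\calH:=L^2(\nu_1)\oplus L^2(\nu_2)$ let $\calX_\infty$ be multiplication by $t$ on each summand. This is bounded and self-adjoint, with spectrum in $[-b,b]$. Take $e_1=(\one,0)$ and $e_2=(0,\one)$, and let $E:\C^2\to\calH$ be the isometry with $Ee_j^{\mathrm{std}}=e_j$. Then for $\zeta\notin[-b,b]$,
\[
E^*(\zeta-\calX_\infty)^{-1}E=\diag(\phi(\zeta),\widetilde\phi(\zeta)),
\]
and the off-diagonal entries vanish because the summands are orthogonal and invariant.

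\emph{Step 2: Woodbury in the dilation.} Set $K_\theta:=\begin{pmatrix}0&\theta\\\theta&0\end{pmatrix}$ and $\calX_\theta:=\calX_\infty+EK_\theta E^*$. This operator is bounded and self-adjoint for every $\theta\ge0$, so $\norm{(\zeta-\calX_\theta)^{-1}}\le 1/|\Im\zeta|$. I then apply exactly the algebra of \cref{lem:master-equation}: the Woodbury identity together with $\calG+\calG\calK(\Id-\calG\calK)^{-1}\calG=(\Id-\calG\calK)^{-1}\calG$. Both hold verbatim for bounded operators once $\Id_2-\calM K_\theta$ is invertible. This gives
\[
E^*(\zeta-\calX_\theta)^{-1}E=(\Id_2-\calM K_\theta)^{-1}\calM=T_\theta(\zeta),
\]
where the last equality is \eqref{eq:scalar-block-inverse}. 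The invertibility needed off the axis follows by the usual argument. If $(\Id-\calM K_\theta)w=0$ with $w\ne0$, then $f:=(\zeta-\calX_\infty)^{-1}EK_\theta w$ would be a nonzero eigenvector of $\calX_\theta$ with non-real eigenvalue $\zeta$, which is impossible. Since $E$ is an isometry, $\norm{T_\theta(x+\ii y)}_{\op}\le1/|y|$ for all $\theta\ge0$ and all $y\neq0$.

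\emph{Step 3: the bare inverse.} From $T_\theta=(\Id_2-\calM K_\theta)^{-1}\calM$ we get $(\Id_2-\calM K_\theta)^{-1}=T_\theta\,\calM^{-1}$. So it suffices to bound $\norm{\calM(\zeta)^{-1}}_{\op}=\max\{|\phi|^{-1},|\widetilde\phi|^{-1}\}$ on the strip $\{x+\ii y:x\in K,\,|y|\le y_0\}$. On $K\subset(b,\infty)$ both $\phi$ and $\widetilde\phi$ are continuous and strictly positive. Choosing $y_0$ small, the closed strip is a compact subset of $\C\setminus[-b,b]$. By uniform continuity, $|\phi|$ and $|\widetilde\phi|$ stay above half their minima on $K$ throughout the strip. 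This gives $C_K$.

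The only step needing care is Step 2: the bound must be uniform in $\theta$, including $\theta$ so large that the root $\rho(\theta)$ has left $K$. This is where the dilation is essential. A direct estimate of $|1-\theta^2 d|^{-1}$ would have to track the imaginary part of $d$ and would lose uniformity. The self-adjoint realization gives $1/|y|$ for free, since the perturbation $EK_\theta E^*$ is self-adjoint whatever the size of $\theta$.
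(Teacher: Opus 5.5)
Your proposal is correct and follows essentially the same route as the paper: realize $T_\theta$ as the compression $E^*(\zeta\Id-[A+EK_\theta E^*])^{-1}E$ of a self-adjoint rank-two dilation to get the $1/|y|$ bound uniformly in $\theta$. Then bound the bare inverse through $(\Id_2-\calM K_\theta)^{-1}=T_\theta\calM^{-1}$, with $|\phi|,|\widetilde\phi|$ bounded below on a thin strip; the paper writes this as entrywise identities, and your eigenvector argument for invertibility of $\Id_2-\calM K_\theta$ off the real axis makes explicit a point the paper leaves implicit.
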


\begin{proof}
The functions $\phi$ and $\widetilde\phi$ are Stieltjes transforms of probability measures on $[-b,b]$: namely, the symmetric singular-value law associated with $\mu$ and its mixture with an atom at zero.  Let $A_{\mathrm L},A_{\mathrm R}$ be the corresponding multiplication operators and let $e_{\mathrm L},e_{\mathrm R}$ be cyclic unit vectors whose scalar resolvents are $\phi$ and $\widetilde\phi$.  With $A=A_{\mathrm L}\oplus A_{\mathrm R}$ and $E(a,b)=a e_{\mathrm L}\oplus b e_{\mathrm R}$, Woodbury's identity gives
\[
 T_\theta(\zeta)
 =E^*\left(\zeta\Id-
 \left[A+E\begin{pmatrix}0&\theta\\\theta&0\end{pmatrix}E^*\right]
 \right)^{-1}E.
\]
The operator inside the resolvent is self-adjoint, so the spectral theorem gives
$\norm{T_\theta(x+\ii y)}_{\op}\le |y|^{-1}$.

Because $K\Subset(b,\infty)$ and $\phi,\widetilde\phi$ are positive on $K$, after choosing $y_0>0$ small enough there is $c_K>0$ such that
\[
 \min\{|\phi(x+\ii y)|,|\widetilde\phi(x+\ii y)|\}\ge c_K
\]
uniformly for $x\in K$ and $|y|\le y_0$.  The explicit formulas show that the entries of the second matrix in \eqref{eq:off-axis-scalar-lemma} satisfy
\[
 \frac1{1-\theta^2d}=\frac{(T_\theta)_{11}}{\phi}
 =\frac{(T_\theta)_{22}}{\widetilde\phi},\qquad
 \frac{\theta\phi}{1-\theta^2d}=\frac{(T_\theta)_{12}}{\widetilde\phi},\qquad
 \frac{\theta\widetilde\phi}{1-\theta^2d}=\frac{(T_\theta)_{12}}{\phi}.
\]
Thus every entry is bounded by $C_K\norm{T_\theta}_{\op}$, uniformly in $\theta\ge0$, which proves the second estimate.
\end{proof}

The same dilation settles two points left open by \cref{def:stable-contour}: the scalar master equation has no non-real roots, and separated fixed windows are always stable.

\begin{lemma}[Separated windows are stable]\label{lem:windows-stable}
Let $\Gamma$ be a positively oriented finite union of rectifiable Jordan curves with $\overline{\Omega_\Gamma}\Subset\C\setminus[-b,b]$, and suppose that
\[
  \delta_\Gamma:=\inf_n\ \min_{1\le i\le r_n}\ \dist\bigl(\Gamma,\{\pm\rho(\theta_{n,i})\}\bigr)>0,
\]
where strengths with $\theta_{n,i}\le\theta_\star$ contribute no roots and are omitted from the minimum, with the convention $\min\emptyset=+\infty$.  Then the inverse bound \eqref{eq:contour-stability} holds.  Moreover, for every $\theta\ge0$ all zeros of $\zeta\mapsto1-\theta^2d(\zeta)$ in $\C\setminus[-b,b]$ are real: they are exactly the points $\pm\rho(\theta)$, present precisely when $\theta>\theta_\star$, and each is a simple zero because $d'(\rho(\theta))<0$ by \cref{lem:d-monotone}.
\end{lemma}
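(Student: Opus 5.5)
The plan is to treat the two assertions separately, doing the root analysis first because the inverse bound rests on it.

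\emph{Reality and simplicity of the roots.} Fix $\theta>0$; for $\theta=0$ the function $1-\theta^2d$ is identically $1$ and there is nothing to prove. Zeros of $1-\theta^2 d$ are exactly the points where the $2\times2$ block matrix $\Id_2-\diag(\phi,\widetilde\phi)\bigl(\begin{smallmatrix}0&\theta\\\theta&0\end{smallmatrix}\bigr)$ is singular, since its determinant is $1-\theta^2\phi\widetilde\phi$. By the dilation in the proof of \cref{lem:off-axis-scalar} and the determinant identity \eqref{eq:det-master} in that operator setting, such a zero off $[-b,b]$ is an eigenvalue of the self-adjoint operator $A+E\bigl(\begin{smallmatrix}0&\theta\\\theta&0\end{smallmatrix}\bigr)E^*$, and is therefore real. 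A more elementary check is also available: for $\Im\zeta>0$, both $\phi$ and $\widetilde\phi$ have negative imaginary part, being Stieltjes-type transforms $\int\zeta/(\zeta^2-x)$ of symmetric laws, i.e.\ $\int(\zeta-s)^{-1}$. So $\arg\phi,\arg\widetilde\phi\in(-\pi,0)$, and $d=\phi\widetilde\phi$ could still equal $\theta^{-2}$. I would therefore rely on the dilation argument rather than this check.

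\emph{Location of the real roots.} On $(b,\infty)$, $d$ is strictly decreasing from $d(b+)$ to $0$. Hence $1-\theta^2d$ vanishes there exactly once, at $\rho(\theta)$, precisely when $\theta^2d(b+)>1$, i.e.\ $\theta>\theta_\star$. Oddness of $\phi$ and $\widetilde\phi$ makes $d$ even, which gives the root $-\rho(\theta)$. Simplicity follows from $d'(\rho)<0$, by \cref{lem:d-monotone}.

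\emph{Inverse bound on $\Gamma$.} The matrix $\Id_{2r_n}-\calM\calK_n$ splits into $2\times2$ blocks, so its inverse norm is the maximum over $i$ of the block inverse norms. By the explicit formula (the entry identities in \cref{lem:off-axis-scalar}), each block inverse equals
\[
\frac{1}{1-\theta^2d}\begin{pmatrix}1&\theta\phi\\ \theta\widetilde\phi&1\end{pmatrix}.
\]
Because $\Gamma$ is compact in $\C\setminus[-b,b]$, the functions $\phi,\widetilde\phi,d$ are bounded there and $\phi,\widetilde\phi$ are nonzero (they are nonvanishing off $[-b,b]$ by the argument above). The proof then needs a bound uniform in $\theta\in[0,\infty)$, and I would split into three regimes.
\begin{enumerate}
\item For $\theta\le\Theta_0$ small, $|1-\theta^2d|\ge1/2$ and the entries are bounded.
\item For $\theta\ge\Theta_1$ large, $|1-\theta^2d|\ge\theta^2|d|-1$ grows like $\theta^2$, provided $\inf_\Gamma|d|>0$. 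This holds because $d\neq0$ on the compact set $\Gamma$. Then $\theta/|1-\theta^2d|$ is bounded as well.
\item For $\theta\in[\Theta_0,\Theta_1]$, the function $(\theta,\zeta)\mapsto|1-\theta^2d(\zeta)|$ is continuous on a compact set. It vanishes only at $\zeta=\pm\rho(\theta)$ with $\theta>\theta_\star$, which is the root description above.
\end{enumerate}

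\emph{Main obstacle: the compact middle regime.} The hypothesis $\delta_\Gamma>0$ only controls $\theta$ values that actually occur. So the bound should be taken over the set $\{(\theta,\zeta)\in[\Theta_0,\Theta_1]\times\Gamma:\dist(\zeta,\pm\rho(\theta))\ge\delta_\Gamma\}$, which is compact by continuity of $\rho$. On this set the function has no zero, so it has a positive minimum. This yields a uniform bound in $n$, and hence \eqref{eq:contour-stability}. A further technical point is near-critical $\theta\approx\theta_\star$, where $\rho(\theta)\to b$. There roots approach the edge, which is a fixed distance from $\Gamma$; the closed-set minimum argument still covers this case because $\overline{\Omega_\Gamma}$ avoids $[-b,b]$.
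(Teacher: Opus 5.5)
Your proof is correct. The root analysis is the same as the paper's: reality of zeros via the self-adjoint dilation $A_\theta=A+E\bigl(\begin{smallmatrix}0&\theta\\\theta&0\end{smallmatrix}\bigr)E^*$, the unique root $\rho(\theta)$ on $(b,\infty)$ from monotonicity of $d$, the mirror root from evenness, and simplicity from $d'<0$.

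The genuine difference is in the inverse bound. The paper uses the dilation a second time. Since $\spec(A_\theta)\setminus[-b,b]=\{\pm\rho(\theta)\}$, the spectral theorem gives
\[
\norm{T_\theta(\zeta)}_{\op}\le\dist(\zeta,\spec A_\theta)^{-1}\le\max\{\dist(\Gamma,[-b,b])^{-1},\delta_\Gamma^{-1}\},
\]
uniformly in $\theta\ge0$. The entry identities from \cref{lem:off-axis-scalar}, together with $\min_\Gamma\min\{|\phi|,|\widetilde\phi|\}>0$, then convert this into a bound on each $2\times2$ block inverse. You instead work with the explicit block inverse and a soft compactness argument in $\theta$, split into small, large, and a compact middle range.

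Both routes work, and they buy different things. The paper's is quantitative: the constant $2C_\Gamma/c_\Gamma$ depends only on $\delta_\Gamma$, $\dist(\Gamma,[-b,b])$ and $\min_\Gamma|\phi|,|\widetilde\phi|$. It also needs no case split and no closedness check near $\theta_\star$. Yours gives a non-explicit constant but uses nothing beyond the zero-set description and continuity.

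In your middle regime, make the set explicit so that actual subcritical strengths are not excluded, since $\delta_\Gamma$ omits them. Two options work:
\begin{itemize}
\item Include all of $\{\theta\le\theta_\star\}\times\Gamma$ in the set. It is still closed, because $\rho$ is continuous on $(\theta_\star,\infty)$.
\item With the convention $\rho=b$ below $\theta_\star$, replace $\delta_\Gamma$ by $\min\{\delta_\Gamma,\dist(\Gamma,[-b,b])\}$.
\end{itemize}

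One correction to your aside: the elementary half-plane check does suffice. If $\Im\zeta>0$, then $\phi(\zeta)$ and $\widetilde\phi(\zeta)$ both lie in the open lower half-plane. Hence $\arg d(\zeta)\in(-2\pi,0)$, so $d(\zeta)$ can never be a positive real such as $\theta^{-2}$; it may be negative real, which is irrelevant. Conjugation symmetry, $d(\bar\zeta)=\overline{d(\zeta)}$, covers $\Im\zeta<0$. So non-real zeros are excluded without the dilation. Relying on the dilation, as you do, is of course also valid.
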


\begin{proof}
Fix $\theta\ge0$ and let $A_\theta:=A+E\bigl(\begin{smallmatrix}0&\theta\\\theta&0\end{smallmatrix}\bigr)E^*$ be the self-adjoint operator from the proof of \cref{lem:off-axis-scalar}, so that $T_\theta(\zeta)=E^*(\zeta\Id-A_\theta)^{-1}E$ off its spectrum.  Since the perturbation has rank two, Weyl's theorem keeps the essential spectrum inside $[-b,b]$, and a point $\zeta\notin[-b,b]$ lies in $\spec(A_\theta)$ precisely when $1-\theta^2d(\zeta)=0$, by the same determinant computation as in \cref{lem:master-equation}.  Self-adjointness forces such points to be real; on $(b,\infty)$ strict monotonicity of $d$ produces the unique root $\rho(\theta)$ for $\theta>\theta_\star$ and none otherwise, and the roots on $(-\infty,-b)$ are the mirror images because $d$ is even.  This proves the second claim and identifies $\spec(A_\theta)\setminus[-b,b]=\{\pm\rho(\theta)\}$.

For the inverse bound, the spectral theorem gives, for $\zeta\in\Gamma$ and every strength $\theta_{n,i}$,
\[
 \norm{T_{\theta_{n,i}}(\zeta)}_{\op}
 \le\dist\bigl(\zeta,\spec(A_{\theta_{n,i}})\bigr)^{-1}
 \le\max\bigl\{\dist(\Gamma,[-b,b])^{-1},\,\delta_\Gamma^{-1}\bigr\}=:C_\Gamma,
\]
uniformly in $n$, $i$, and the strength.  The functions $\phi$ and $\widetilde\phi$ are continuous and zero-free on $\C\setminus[-b,b]$: they are positive on $(b,\infty)$, odd, and have strictly negative imaginary part in the upper half-plane.  Hence $c_\Gamma:=\min_{\zeta\in\Gamma}\min\{|\phi(\zeta)|,|\widetilde\phi(\zeta)|\}>0$, and the entry identities in the proof of \cref{lem:off-axis-scalar} bound every entry of the block inverse $[\Id_2-\diag(\phi,\widetilde\phi)\bigl(\begin{smallmatrix}0&\theta_{n,i}\\\theta_{n,i}&0\end{smallmatrix}\bigr)]^{-1}$ by $C_\Gamma/c_\Gamma$.  After the pairing permutation, $\Id_{2r_n}-\calM(\zeta)\calK_n$ is the direct sum of these $2\times2$ blocks, so \eqref{eq:contour-stability} holds with constant $2C_\Gamma/c_\Gamma$.
\end{proof}

A stable window in the sense of \cref{def:stable-contour} always produces a contour to which the lemma applies.  Given a compact $J\Subset(b,\infty)$ whose components have endpoints at distance at least $\gamma>0$ from every $\rho(\theta_{n,i})$, take $\Gamma$ to be the boundaries of rectangles of a fixed height $h$ erected on the components of $J$, with $h$ small enough that $\overline{\Omega_\Gamma}\Subset\C\setminus[-b,b]$.  Then $\dist(\Gamma,\rho(\theta_{n,i}))\ge\min\{h,\gamma\}$ for every $i$, and the mirror roots $-\rho(\theta_{n,i})<-b$ are automatically far from $\Gamma$ because $\Gamma$ meets the real axis only in $\partial J\subset(b,\infty)$.  Hence $\delta_\Gamma>0$ and \eqref{eq:contour-stability} holds.

\begin{proof}[Proof of \cref{thm:projectors}]
Let $\Gamma$ be a stable contour enclosing precisely the deterministic locations in $J$.  By \cref{prop:contour-transfer}, the actual and scalar equations have the same number of enclosed roots, and by \cref{lem:windows-stable} the enclosed scalar roots are exactly the real points $\rho(\theta_{n,i})\in J$, counted with multiplicity in $i$.  The positive Riesz projector of $\calW_n$ is
\[
  \widehat\calP_n(J)=\frac1{2\pi\ii}\int_\Gamma(\zeta\Id-\calW_n)^{-1}\,\dd\zeta.
\]
Compressing by $\calE_n$, using \eqref{eq:compressed-resolvent-transfer}, and integrating gives convergence to the sum of the scalar residues.  Formula \eqref{eq:residue-block} yields \eqref{eq:full-compressed-projector}.

If $\widehat U_J$ and $\widehat V_J$ collect the singular vectors with singular values in $J$, then
\[
  \widehat\calP_n(J)=\frac12
  \begin{pmatrix}
    \widehat U_J\widehat U_J^*&\widehat U_J\widehat V_J^*\\
    \widehat V_J\widehat U_J^*&\widehat V_J\widehat V_J^*
  \end{pmatrix}.
\]
Comparing the four blocks proves \eqref{eq:left-projector}--\eqref{eq:cross-projector}.
\end{proof}

\begin{proof}[Proof of \cref{cor:isolated-vector}]
For an isolated simple outlier, each projector in \cref{thm:projectors} has rank one.  The diagonal blocks of \eqref{eq:full-compressed-projector} give \eqref{eq:isolated-left} and \eqref{eq:isolated-right}, including vanishing projections onto the remaining signal directions.  The off-diagonal block gives the phase-invariant product in \eqref{eq:isolated-cross} directly.
\end{proof}

\begin{proof}[Proof of \cref{cor:cluster-vectors}]
Write
\[
 A_n:=U_{C_n}^*\widehat U_{C_n},\qquad
 B_n:=V_{C_n}^*\widehat V_{C_n}.
\]
The projector theorem and continuity of the weights imply
\[
 A_nA_n^*=\cL(\theta)\Id_{q_n}+o_{\op}(1),\quad
 B_nB_n^*=\cR(\theta)\Id_{q_n}+o_{\op}(1),\quad
 A_nB_n^*=\cLR(\theta)\Id_{q_n}+o_{\op}(1).
\]
Since $\cLR=\sqrt{\cL\cR}$, the polar factor of $A_n$ is also an asymptotic polar factor of $B_n$.  Because $\theta>\theta_\star$ we have $\cL(\theta)>0$, so on the event $\norm{A_nA_n^*-\cL(\theta)\Id}_{\op}<\cL(\theta)/2$---which has probability tending to one---every singular value of $A_n$ is at least $\sqrt{\cL(\theta)/2}$.  There $A_n$ is invertible, its polar decomposition $A_n=H_nQ_n$ has $H_n\succ0$ and $Q_n=H_n^{-1}A_n$ genuinely orthogonal or unitary, and $H_n\to\sqrt{\cL(\theta)}\Id$ because the square root is Lipschitz on $[\cL(\theta)/2,2\cL(\theta)]$, uniformly in $q_n$.  Off that event set $Q_n:=\Id$.  The cross relation then gives $B_n-\sqrt{\cR(\theta)}Q_n=o_{\op}(1)$.  This proves \eqref{eq:cluster-left}--\eqref{eq:cluster-right}.
\end{proof}

\subsection{Proof of the empirical laws}

\begin{proof}[Proof of \cref{thm:empirical-laws}]
We first prove the value-measure limit without assuming tightness of the spike measures at infinity.  For $z>b$, put
\[
  N_n(z):=\#\{j:\sigma_j(W_n)>z\},
  \qquad \vartheta(z):=d(z)^{-1/2}.
\]
At this fixed level, weighted square-root comparison and the inertia identity imply that, for every sufficiently small fixed $a>0$, with probability tending to one,
\begin{equation}\label{eq:count-sandwich}
 \#\{i:\theta_{n,i}>\vartheta(z)+a\}
 \le N_n(z)
 \le \#\{i:\theta_{n,i}>\vartheta(z)-a\}.
\end{equation}
Indeed, the scalar positive eigenvalues at $z$ are $\theta_{n,i}/\vartheta(z)$, while the random and scalar comparison matrices differ by $o_{\Pp}(1)$ in operator norm.

Let $b<z_1<z_2$ and write $\vartheta_\ell=\vartheta(z_\ell)$.  Since $z_1>b$ we have $\vartheta_1>\theta_\star$, so the constraint
\[
  a<\min\left\{\frac{\vartheta_2-\vartheta_1}2,\ \vartheta_1-\theta_\star\right\}
\]
leaves both thresholds above $\theta_\star$, where $H_n$ and its supercritical restriction $H_n^+$ agree.  For such $a$, subtracting the two bounds in \eqref{eq:count-sandwich} gives
\begin{align}\label{eq:window-sandwich}
 H_n^+((\vartheta_1+a,\vartheta_2-a])
 &\le \frac1{r_n}\#\{j:z_1<\sigma_j(W_n)\le z_2\}\notag\\
 &\le H_n^+((\vartheta_1-a,\vartheta_2+a])
\end{align}
with probability tending to one.  Choose endpoints that are continuity points of $H^+$ and then let $a\downarrow0$.  Vague convergence of $H_n^+$ on the relatively compact interval between the two thresholds proves convergence of all such window masses.  These windows form a convergence-determining class on $(b,\infty)$, so
\[
   \nu_n^{\mathrm{out}}\vague\rho_\#H^+.
\]
Notice that spikes escaping to infinity cancel in the difference of the two tail counts; no global upper bound is required.

We next treat the overlaps without imposing artificial gaps between nearby locations.  Fix first $f\in C_c^\infty((b,\infty))$, extended by zero to the whole real line, and set
\[
  \calT_n(\zeta):=\calE_n^*(\zeta\Id-\calW_n)^{-1}\calE_n,
  \qquad
  \calT_n^{(0)}(\zeta):=
  (\Id-\calM(\zeta)\calK_n)^{-1}\calM(\zeta).
\]
For $\Im\zeta\ne0$, the exact identity
\begin{equation}\label{eq:off-axis-difference}
 \calT_n(\zeta)-\calT_n^{(0)}(\zeta)
 =(\Id-\calG_n(\zeta)\calK_n)^{-1}
   (\calG_n(\zeta)-\calM(\zeta))
   (\Id-\calK_n\calM(\zeta))^{-1}
\end{equation}
holds.  Work on a fixed compact strip whose real projection contains $\supp f$ and which remains a positive distance from $[-b,b]$.  Let $e_n\downarrow0$ be a deterministic high-probability envelope for the unweighted and weighted local-law errors on that strip.

Applying \cref{lem:off-axis-scalar} block by block gives
\begin{equation}\label{eq:deterministic-off-axis-bound}
  \norm{\calT_n^{(0)}(x+\ii y)}_{\op}
  +\norm{(\Id-\calM(x+\ii y)\calK_n)^{-1}}_{\op}
  +\norm{(\Id-\calK_n\calM(x+\ii y))^{-1}}_{\op}
  \le \frac{C_f}{|y|}
\end{equation}
uniformly in the spike strengths.  When $|y|\ge e_n^{1/2}$, a Neumann-series argument based on weighted compatibility gives the same inverse bound for $\Id-\calG_n\calK_n$.  Hence \eqref{eq:off-axis-difference} yields
\begin{equation}\label{eq:off-axis-comparison-rate}
  \norm{\calT_n(x+\ii y)-\calT_n^{(0)}(x+\ii y)}_{\op}
  \le \frac{C_f e_n}{|y|^2}
  \qquad (|y|\ge e_n^{1/2}).
\end{equation}
For $0<|y|<e_n^{1/2}$, the spectral theorem gives
\[
   \norm{\calT_n(x+\ii y)}_{\op}\le |y|^{-1},
\]
and \eqref{eq:deterministic-off-axis-bound} gives the corresponding deterministic estimate.

Apply the Helffer--Sj\"ostrand functional calculus~\cite{DimassiSjostrand1999} with an almost-analytic extension of order at least three.  In the region $|y|\ge e_n^{1/2}$, the integrand is controlled by $C e_n|y|$ after \eqref{eq:off-axis-comparison-rate}; in the remaining strip it is controlled by $C|y|^2$.  Both integrals tend to zero.  Therefore
\begin{equation}\label{eq:smooth-functional-transfer}
  \norm{\calE_n^*f(\calW_n)\calE_n-f[\calT_n^{(0)}]}_{\op}
  \longrightarrow0
\end{equation}
in probability, where $f[\calT_n^{(0)}]$ denotes the same almost-analytic integral with $\calT_n^{(0)}$ in place of the resolvent.

The scalar comparison is diagonal in the spike index.  Its only positive poles outside $[-b,b]$ are the supercritical roots, and \eqref{eq:residue-block} gives
\begin{equation}\label{eq:deterministic-functional-block}
 f[\calT_n^{(0)}]
 =\frac12\bigoplus_{i=1}^{r_n}
 f(\rho(\theta_{n,i}))
 \begin{pmatrix}
   \cL(\theta_{n,i})&\cLR(\theta_{n,i})\\
   \cLR(\theta_{n,i})&\cR(\theta_{n,i})
 \end{pmatrix},
\end{equation}
where subcritical terms are zero.  On the other hand, the positive spectral decomposition of $\calW_n$ gives
\[
 \calE_n^*f(\calW_n)\calE_n
 =\frac12\sum_j f(\sigma_j(W_n))
 \begin{pmatrix}
  U_n^*\widehat u_{n,j}\widehat u_{n,j}^*U_n&
  U_n^*\widehat u_{n,j}\widehat v_{n,j}^*V_n\\
  V_n^*\widehat v_{n,j}\widehat u_{n,j}^*U_n&
  V_n^*\widehat v_{n,j}\widehat v_{n,j}^*V_n
 \end{pmatrix}.
\]
Take normalized traces of the two diagonal blocks and of the upper-right block in \eqref{eq:smooth-functional-transfer}, and use \eqref{eq:deterministic-functional-block}.  This compares the three empirical overlap integrals with
\[
 \int f(\rho(\theta))\cL(\theta)\,H_n^+(\dd\theta),\qquad
 \int f(\rho(\theta))\cR(\theta)\,H_n^+(\dd\theta),\qquad
 \int f(\rho(\theta))\cLR(\theta)\,H_n^+(\dd\theta),
\]
respectively, up to $o_{\Pp}(1)$.  Vague convergence of $H_n^+$ and continuity of the weights on the compact preimage of $\supp f$ give the three limits for smooth $f$.

Finally, smooth test functions are uniformly dense in $C_c((b,\infty))$.  The left and right overlap measures have total mass at most one.  Moreover,
\[
  \sum_j|\chi_{n,j}|
  \le\frac12\sum_j\left(
     \norm{U_n^*\widehat u_{n,j}}_2^2+
     \norm{V_n^*\widehat v_{n,j}}_2^2
  \right)\le r_n,
\]
so the cross measures have uniformly bounded total variation.  The deterministic weighted measures are likewise uniformly bounded on compact supercritical sets.  Uniform approximation therefore extends the three limits to every $f\in C_c((b,\infty))$.
\end{proof}

\section{Random signal orientations over arbitrary noise}\label{sec:random-orientation}

This is the broadest verification in terms of the noise ensemble.  The noise need not have independent entries or random singular vectors; it may even be deterministic.  Averaging is supplied entirely by the signal directions.

\begin{assumption}[Random signal orientations]\label{ass:random-orientation}
Conditional on $(X_n,\Theta_n)$, the matrices $U_n\in\St(n,r_n)$ and $V_n\in\St(m,r_n)$ are independent Haar Stiefel matrices.  The singular values in $\Theta_n$ may be deterministic or random.
\end{assumption}

\begin{theorem}[Subspace law from random orientations]\label{thm:random-orientation}
Assume \cref{ass:global-noise,ass:random-orientation} and $r_n=o(n)$.  Then \cref{ass:holo-SLL} holds.  More precisely, for every compact $K\Subset\C\setminus[-b,b]$, there is a deterministic $s_n(K)\downarrow0$ such that one may take
\begin{equation}\label{eq:random-orientation-rate}
  \eps_n(K)\le C_K\left(
      \sqrt{\frac{r_n}{n}}+\sqrt{\frac{\log n}{n}}+s_n(K)
  \right).
\end{equation}
Here $s_n(K)$ is a deterministic envelope for the scalar trace error in \cref{lem:scalar-convergence}.  If $\norm{\Theta_n}_{\op}=O(1)$, weighted compatibility follows automatically.
\end{theorem}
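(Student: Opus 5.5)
Condition on $(X_n,\Theta_n)$ and work on the event $\norm{X_n}_{\op}\le b+\tfrac12\dist(K,[-b,b])$, which has probability tending to one by \cref{ass:global-noise}. On this event the resolvent is analytic on a neighborhood of $K$ and uniformly bounded there. The first step is to write out the blocks of $\calR_{X,n}(\zeta)$ explicitly:
\[
\calR_{X,n}(\zeta)=\begin{pmatrix}\zeta(\zeta^2-XX^*)^{-1}&X(\zeta^2-X^*X)^{-1}\\ X^*(\zeta^2-XX^*)^{-1}&\zeta(\zeta^2-X^*X)^{-1}\end{pmatrix}.
\]
The compression $\calG_n$ then has three kinds of blocks:
\begin{itemize}
\item $U^*A_\zeta U$, with $A_\zeta=\zeta(\zeta^2-XX^*)^{-1}$;
\item $V^*B_\zeta V$, with $B_\zeta=\zeta(\zeta^2-X^*X)^{-1}$;
\item the off-diagonal block $U^*C_\zeta V$, with $C_\zeta=X(\zeta^2-X^*X)^{-1}$.
\end{itemize}
The normalized traces are $\frac1n\tr A_\zeta=\phi_n(\zeta)$ and $\frac1m\tr B_\zeta=\widetilde\phi_n(\zeta)$. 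By \cref{lem:scalar-convergence} it therefore suffices to show three things, uniformly on $K$:
\[
\norm{U^*A_\zeta U-\phi_n\Id}_{\op},\qquad \norm{V^*B_\zeta V-\widetilde\phi_n\Id}_{\op},\qquad \norm{U^*C_\zeta V}_{\op}
\]
are all $O(\sqrt{r_n/n}+\sqrt{\log n/n})$ with high probability. The term $s_n(K)$ then absorbs the trace error.

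For a fixed $\zeta$, the plan uses the Stiefel concentration estimates of \cref{app:stiefel}.
\begin{itemize}
\item \textbf{Diagonal blocks.} Take $U$ Haar on $\St(n,r)$ and $A$ deterministic with $\norm{A}_{\op}\le C_K$. Then $\norm{U^*AU-\frac1n\tr A\,\Id}_{\op}\le C(\sqrt{r/n}+t)$ with probability at least $1-e^{-cnt^2}$. I would prove this by splitting $A$ into real and imaginary Hermitian parts. For each part, $x\mapsto\ip{Ux}{AUx}$ on a $\tfrac14$-net of the unit sphere of $\F^r$ (cardinality $9^{2r}$) is a Lipschitz function of a uniform vector on $S^{n-1}$. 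Lévy concentration on the sphere at scale $\sqrt{r/n}+t$ then beats the $e^{Cr}$ union bound. The right block is handled the same way, with $m\asymp n$.
\item \textbf{Cross block.} $U$ and $V$ are independent, and $\E\,U^*CV=0$ by the Haar invariance $V\mapsto -V$. Conditioning on $U$, for fixed net vectors $x,y$ the map $V\mapsto\ip{Ux}{CVy}$ is $\norm{C}_{\op}$-Lipschitz with mean zero. The same net bound gives $\norm{U^*CV}_{\op}\le C(\sqrt{r/n}+t)$.
\end{itemize}
I expect this net-plus-concentration step to be the main technical point. One must check that the Lipschitz constants, measured in the Stiefel (Gaussian-comparable) geometry, are $O(1/\sqrt n)$ after normalization, and for that I would realize Haar frames as Gram–Schmidt of Gaussian matrices, whose distortion is $1+O(\sqrt{r/n})$.

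For uniformity in $\zeta$, cover $K$ by a net of mesh $n^{-1}$ with $O(n^2)$ points and take $t=C\sqrt{\log n/n}$ so the union bound still tends to zero. Between net points, all three blocks are Lipschitz in $\zeta$ with constant $C_K$, by the resolvent identity on the good event. This produces the $\sqrt{\log n/n}$ term in \eqref{eq:random-orientation-rate}. Since the bound holds conditionally with a deterministic rate, it also holds unconditionally.

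Finally, when $\norm{\Theta_n}_{\op}=O(1)$, \cref{rem:weighted} gives weighted compatibility immediately, because $\norm{\Theta_n}_{\op}\eps_n(K)\to0$.
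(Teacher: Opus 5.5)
Your proposal is correct and follows essentially the same route as the paper. Both condition on $(X_n,\Theta_n)$ on the good norm event and bound the diagonal and mixed Stiefel compressions at fixed $\zeta$ by a $\tfrac14$-net plus spherical concentration with tails $e^{-cnt^2}$ (the paper's \cref{lem:stiefel-compression,lem:stiefel-mixed}); both then pass to all of $K$ by a polynomial mesh at cost $\sqrt{\log n/n}$ and absorb the trace error through \cref{lem:scalar-convergence}. The Gram--Schmidt check you flag as the main difficulty is unnecessary, because for each fixed net vector $x$ the vector $U_nx$ is already exactly uniform on the sphere; and the second off-diagonal block $V_n^*X_n^*(\zeta^2\Id_n-X_nX_n^*)^{-1}U_n$, which your list omits, is handled by the identical argument.
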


\begin{proof}
For $\zeta\notin\spec(\calX_n)$, the block resolvent formula gives
\begin{equation}\label{eq:full-resolvent-blocks}
 (\zeta\Id-\calX_n)^{-1}
 =\begin{pmatrix}
 A_n(\zeta)&B_n(\zeta)\\
 C_n(\zeta)&D_n(\zeta)
 \end{pmatrix},
\end{equation}
where
\begin{align*}
 A_n(\zeta)&=\zeta(\zeta^2\Id_n-X_nX_n^*)^{-1},\\
 D_n(\zeta)&=\zeta(\zeta^2\Id_m-X_n^*X_n)^{-1},\\
 B_n(\zeta)&=X_n(\zeta^2\Id_m-X_n^*X_n)^{-1},\\
 C_n(\zeta)&=X_n^*(\zeta^2\Id_n-X_nX_n^*)^{-1}.
\end{align*}
On the event that $\norm{X_n}_{\op}$ stays a fixed distance from $K$, all four blocks and their first $\zeta$-derivatives are bounded in operator norm by a deterministic constant depending only on $K$.

Condition on $(X_n,\Theta_n)$.  The four blocks of $\calG_n(\zeta)$ are
\[
 U_n^*A_n(\zeta)U_n,
 \quad U_n^*B_n(\zeta)V_n,
 \quad V_n^*C_n(\zeta)U_n,
 \quad V_n^*D_n(\zeta)V_n.
\]
The Stiefel concentration estimates in \cref{lem:stiefel-compression,lem:stiefel-mixed} show, for each fixed $\zeta$, that the diagonal compressions differ from their normalized traces by
$O_{\Pp}(\sqrt{r_n/n}+t)$ and that both mixed compressions are $O_{\Pp}(\sqrt{r_n/n}+t)$, with tails bounded by $C\exp(-cnt^2)$.  A two-dimensional net of $K$, together with the resolvent Lipschitz bound, makes these estimates uniform in $\zeta$ at the additional cost $\sqrt{\log n/n}$.  The normalized traces are $\phi_n$ and $\widetilde\phi_n$, which converge uniformly to $\phi$ and $\widetilde\phi$ by \cref{lem:scalar-convergence}.  This proves \eqref{eq:random-orientation-rate}.
\end{proof}

\begin{corollary}[Outliers and singular subspaces for arbitrary noise]\label{cor:random-orientation-main}
Under \cref{ass:global-noise,ass:edge,ass:random-orientation}, suppose $r_n=o(n)$ and the spike strengths are bounded, or more generally satisfy the weighted compatibility condition induced by \eqref{eq:random-orientation-rate}.  Then the hypotheses of the value and projector transfer theorems are verified.  The empirical laws also apply whenever $r_n\to\infty$ and the restricted supercritical spike measures have the convergence stated in \cref{thm:empirical-laws}.
\end{corollary}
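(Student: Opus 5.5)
The corollary is a bookkeeping statement: it checks that every hypothesis of \cref{thm:value-matching}, \cref{thm:projectors} and \cref{thm:empirical-laws} holds under the stated assumptions. The plan is to verify these hypotheses one by one, using \cref{thm:random-orientation} as the only probabilistic input.

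\textbf{Step 1: the subspace local laws.} \Cref{ass:global-noise} and \cref{ass:edge} are assumed directly. \Cref{thm:random-orientation} gives \cref{ass:holo-SLL} on every compact $K\Subset\C\setminus[-b,b]$ with error $\eps_n(K)$ bounded by \eqref{eq:random-orientation-rate}, which tends to zero because $r_n=o(n)$. Restricting to compact intervals $I\Subset(b,\infty)$ gives the real law, but \cref{ass:real-SLL} also requires $\norm{X_n}_{\op}<\inf I$ on the same event. This follows from \cref{ass:global-noise}, since $\norm{X_n}_{\op}\to b<\inf I$ in probability. Intersecting two events of probability $1-o(1)$ yields \eqref{eq:real-SLL}.

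\textbf{Step 2: weighted compatibility.} If $\norm{\Theta_n}_{\op}=O(1)$, then $\norm{\calK_n}_{\op}=\norm{\Theta_n}_{\op}$ is bounded. Hence $\norm{(\calG_n-\calM)\calK_n}_{\op}\le\norm{\Theta_n}_{\op}\eps_n(K)\to0$, and the same bound holds with $\calK_n$ on the left, so \cref{rem:weighted} applies. For unbounded spikes, the hypothesis of the corollary is exactly \eqref{eq:weighted-error} on the relevant compacts; a sufficient condition is $\norm{\Theta_n}_{\op}\eps_n(K)\to0$ with $\eps_n$ from \eqref{eq:random-orientation-rate}.

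One subtlety: $\Theta_n$ may be random. \Cref{thm:random-orientation} is proved conditionally on $(X_n,\Theta_n)$, and its bounds depend on $X_n$ only through the event that $\norm{X_n}_{\op}$ stays away from $K$. The conditional tail bounds are therefore uniform, and taking expectations gives unconditional convergence in probability. This is the only step needing a moment's care.

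\textbf{Step 3: stability and the empirical laws.} For the projector theorem, stability of windows is a deterministic property of $\Theta_n$ (\cref{lem:windows-stable}), not something the local law must supply. It is therefore a hypothesis of the individual application and is not verified here. With all assumptions of \cref{sec:transfer} in place, the value and projector transfer theorems apply verbatim. Likewise, when $r_n\to\infty$ and $H_n^+\vague H^+$ in probability, every hypothesis of \cref{thm:empirical-laws} holds.
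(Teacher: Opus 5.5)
Your proposal is correct and follows the route the paper intends; the paper gives this corollary no separate proof and treats it as the direct combination of \cref{thm:random-orientation} with the transfer theorems of \cref{sec:transfer}. That theorem already supplies the holomorphic law and, for bounded spikes, weighted compatibility; the real law then follows by restriction together with $\norm{X_n}_{\op}\to b$. Your discussion of random $\Theta_n$ is harmless but redundant, since the conclusion of \cref{thm:random-orientation} is already unconditional (and its centering also passes through $\phi_n,\widetilde\phi_n$, not only through the norm event).
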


\begin{example}[Deterministic noise with random signal directions]\label{ex:deterministic-noise}
Let $X_n$ be any deterministic sequence whose squared singular-value measures converge to a limit $\mu$ satisfying \cref{ass:edge} and whose largest singular values converge to $b$.  Choose independent Haar frames $U_n,V_n$ and any bounded mesoscopic strength profile $\Theta_n$ with $r_n=o(n)$.  Then the full value and singular-subspace theory applies.  Thus randomness of the noise is not logically necessary; generic relative orientation is enough.
\end{example}

\begin{example}[Correlated and variance-profile noise]\label{ex:correlated-random-orientation}
Suppose $X_n$ has correlated entries, a variance profile, or a deterministic covariance structure for which only the global singular-value law and top edge are known.  Deterministic spike directions may see a matrix-valued resolvent equivalent, but independent Haar signal frames average that structure to the normalized traces.  \Cref{thm:random-orientation} therefore recovers the scalar rectangular $D$-transform without requiring an anisotropic local law for the noise.
\end{example}

\section{Rotationally invariant noise}\label{sec:haar-noise}

The previous section randomized the signal.  We now randomize the singular vectors of the noise and allow arbitrary independent signal directions.

\begin{assumption}[Haar singular vectors]\label{ass:haar-noise}
The pair $(X_n,P_n)$ has an equal-in-law realization such that
\begin{equation}\label{eq:haar-svd}
  X_n=L_n\Sigma_nR_n^*,
\end{equation}
where, conditional on the singular values $\Sigma_n$, the matrix $L_n$ is Haar on $\mathrm O(n)$ or $\mathrm U(n)$, the matrix $R_n$ is Haar on $\St(m,n)$, and $L_n,R_n$ are conditionally independent.  The signal $P_n$ is independent of $(L_n,\Sigma_n,R_n)$.
\end{assumption}

\begin{theorem}[Subspace law for Haar-singular-vector noise]\label{thm:haar-noise}
Assume \cref{ass:global-noise,ass:haar-noise} and $r_n=o(n)$.  Then \cref{ass:holo-SLL} holds with the same rate as in \eqref{eq:random-orientation-rate}.  Consequently, if the limiting law also satisfies \cref{ass:edge}, bounded signals satisfy the value, projector, and cluster theorems; the empirical laws apply under their additional spike-measure hypothesis.
\end{theorem}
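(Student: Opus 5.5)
The plan is to reduce \cref{thm:haar-noise} to \cref{thm:random-orientation} by a change of variables that moves the Haar randomness from the noise onto the signal frames. Work in the equal-in-law realization \eqref{eq:haar-svd}. Since $\calG_n$ and the event in \eqref{eq:holo-SLL} are determined by the joint law of $(X_n,P_n)$, it suffices to prove the local law there. We also complete $R_n$ to an $m\times m$ Haar matrix $\widetilde R_n$ whose first $n$ columns are $R_n$, and write $\Sigma_n$ as an $n\times m$ rectangular diagonal matrix $\widetilde\Sigma_n$. Then $X_n=L_n\widetilde\Sigma_n\widetilde R_n^*$.

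Set $\calO_n:=\diag(L_n,\widetilde R_n)$, which is unitary. Then $\calX_n=\calO_n\widetilde\calX_n\calO_n^*$, where $\widetilde\calX_n$ is the linearization of $\widetilde\Sigma_n$. Hence
\[
\calG_n(\zeta)=\widetilde\calE_n^*(\zeta\Id-\widetilde\calX_n)^{-1}\widetilde\calE_n,\qquad
\widetilde\calE_n:=\diag(L_n^*U_n,\widetilde R_n^*V_n).
\]
Condition on $(\Sigma_n,U_n,V_n,\Theta_n)$, which is independent of $(L_n,\widetilde R_n)$. By left invariance of Haar measure, $L_n^*U_n$ is Haar on $\St(n,r_n)$ and $\widetilde R_n^*V_n$ is Haar on $\St(m,r_n)$. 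Conditional independence of $L_n$ and $\widetilde R_n$ makes these two frames independent. So the pair (noise $\widetilde\Sigma_n$, frames $L_n^*U_n,\widetilde R_n^*V_n$) satisfies \cref{ass:random-orientation} conditionally on $\Sigma_n$. Its singular-value law and top edge coincide with those of $X_n$, so \cref{ass:global-noise} holds. The effective signal $\widetilde P_n=(L_n^*U_n)\Theta_n(\widetilde R_n^*V_n)^*$ has the same $\Theta_n$.

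Apply \cref{thm:random-orientation} to this rotated model. It yields \eqref{eq:holo-SLL} for the compression $\calG_n$ with rate \eqref{eq:random-orientation-rate}, since that compression is literally the rotated compression. The one point to check is that the Stiefel concentration bounds (\cref{lem:stiefel-compression,lem:stiefel-mixed}) are uniform in the conditioning. They depend only on the operator-norm bounds of the resolvent blocks, which hold on the event $\norm{X_n}_{\op}\le b+o(1)$ uniformly on $K$. We can therefore integrate the conditional probability bounds over $(\Sigma_n,U_n,V_n,\Theta_n)$, keeping the deterministic envelope $s_n(K)$ from \cref{lem:scalar-convergence}.

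I expect the main obstacle to be this bookkeeping rather than any new estimate: the completion of $R_n$ to a full Haar matrix, and the justification that the conditional Haar property survives the possibly random, non-independent choice of $U_n,V_n$ among themselves, which needs only $P_n$ independent of $(L_n,\Sigma_n,R_n)$. The consequences then follow directly. Bounded $\Theta_n$ gives \cref{ass:weighted-compatibility} by \cref{rem:weighted}, so \cref{thm:value-matching,thm:projectors,cor:cluster-vectors,thm:empirical-laws} apply under \cref{ass:edge} and their stated extra hypotheses.
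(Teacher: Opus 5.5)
Your proposal is correct and follows the paper's proof essentially step for step: complete $R_n$ to a Haar $\widetilde R_n$ using auxiliary randomness independent of $(L_n,\Sigma_n,P_n)$, observe that $L_n^*U_n$ and $\widetilde R_n^*V_n$ are then conditionally independent Haar Stiefel frames, and run \cref{thm:random-orientation} against the rectangular-diagonal noise $\Sigma_n$ after conditioning on $(\Sigma_n,P_n)$. Your explicit conjugation by $\diag(L_n,\widetilde R_n)$ is simply a more explicit rendering of the paper's remark that, in singular-vector coordinates, the resolvent blocks become diagonal or rectangular-diagonal functions of $\Sigma_n$.
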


\begin{proof}
Conditionally on $R_n$, use auxiliary randomness independent of $(L_n,\Sigma_n,P_n)$ to choose a uniform orthogonal or unitary completion $\widetilde R_n$ whose first $n$ columns are $R_n$.  Then $\widetilde R_n$ is Haar on $\mathrm O(m)$ or $\mathrm U(m)$ and is independent of $(L_n,\Sigma_n,P_n)$.  Conditional on $(\Sigma_n,P_n)$,
\[
   L_n^*U_n\in\St(n,r_n),
   \qquad
   \widetilde R_n^*V_n\in\St(m,r_n)
\]
are independent Haar Stiefel matrices.  In the singular-vector coordinates, the four full resolvent blocks in \eqref{eq:full-resolvent-blocks} are diagonal or rectangular diagonal functions of $\Sigma_n$.  The proof of \cref{thm:random-orientation} therefore applies verbatim after conditioning on $(\Sigma_n,P_n)$.
\end{proof}

\begin{proposition}[Bi-unitary invariance supplies \cref{ass:haar-noise}]\label{prop:orbit-measure}
If the law of $X_n$ is invariant under $X_n\mapsto L X_n R^*$ for all deterministic $L\in\mathrm O(n)$ or $\mathrm U(n)$ and $R\in\mathrm O(m)$ or $\mathrm U(m)$, then the noise admits the Haar singular-vector realization in \cref{ass:haar-noise}.  If $P_n$ is independent of $X_n$, the pair $(X_n,P_n)$ admits that realization while preserving independence.
\end{proposition}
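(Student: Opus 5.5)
We will produce the realization by taking a singular value decomposition of $X_n$ and then randomizing it with independent Haar matrices.  Fix a measurable choice of full singular value decomposition $X_n=L^0\Sigma_n(R^0)^*$, where $L^0\in\mathrm O(n)$ or $\mathrm U(n)$, $R^0\in\St(m,n)$, and $\Sigma_n=\diag(\sigma_1(X_n)\ge\cdots\ge\sigma_n(X_n))$.  Such a choice exists by standard measurable-selection results for the singular value decomposition, for instance via a Borel section of the compact fibres.  Let $O_1$ be Haar on the $n$-dimensional orthogonal or unitary group and $O_2$ Haar on the $m$-dimensional one, with $O_1,O_2,X_n$ independent.  Set $\widetilde X_n:=O_1X_nO_2^*$.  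By bi-unitary invariance, $\widetilde X_n$ has the law of $X_n$.

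The key step is to compute the joint law of the factors.  We have $\widetilde X_n=(O_1L^0)\Sigma_n(O_2R^0)^*$, and we define $L_n:=O_1L^0$ and $R_n:=O_2R^0$.  Condition on $X_n$, which fixes $L^0$, $R^0$ and $\Sigma_n$.  Left invariance of Haar measure then makes $O_1L^0$ Haar on the $n$-dimensional group.  Since $O_2$ is Haar and $R^0$ is a fixed element of $\St(m,n)$, the matrix $O_2R^0$ is Haar on $\St(m,n)$, because the Haar measure on the Stiefel manifold is the unique measure invariant under the transitive left action.  These two factors are conditionally independent because $O_1$ and $O_2$ are independent.  Both conditional laws are the same for every value of $X_n$, so $(L_n,R_n)$ is independent of $X_n$, and in particular of $\Sigma_n$, which is a function of $X_n$ alone.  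Therefore, conditional on $\Sigma_n$, the pair $(L_n,R_n)$ consists of independent Haar matrices, which is exactly \cref{ass:haar-noise}.

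For the pair statement, suppose $P_n$ is independent of $X_n$.  Take $O_1$, $O_2$, $X_n$ and $P_n$ mutually independent.  Then $(\widetilde X_n,P_n)$ has the law of $(X_n,P_n)$: conditional on $(O_1,O_2)$, the matrix $\widetilde X_n$ has the law of $X_n$ by invariance and is independent of $P_n$.  The triple $(L_n,\Sigma_n,R_n)$ is a measurable function of $(X_n,O_1,O_2)$, which is independent of $P_n$, so $P_n$ is independent of $(L_n,\Sigma_n,R_n)$ as required.

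The main obstacle is technical rather than conceptual.  It consists of the measurable selection of the decomposition when singular values repeat, and of verifying that Stiefel Haar measure is the pushforward of group Haar measure under $O\mapsto OR^0$ independently of $R^0$.  The latter follows from invariance together with uniqueness of the invariant probability measure on the homogeneous space.
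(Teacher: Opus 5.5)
Your argument is correct, but it takes a different route from the paper's.

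\emph{The paper's route.} The paper works at the level of laws. It disintegrates the law of $X_n$ over the ordered singular values. It then notes that each conditional law lives on one orbit of the left--right group action, and identifies it with the unique invariant probability measure on that compact homogeneous space. That measure is the pushforward of independent Haar measures under $(L,R)\mapsto L\Sigma R^*$. The realization is then assembled from $\Sigma_n$ and fresh independent Haar frames.

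\emph{Your route.} You give a pathwise coupling instead. You keep the original $X_n$ and $P_n$, rotate $X_n$ by independent Haar $O_1,O_2$, and read off the new frames $O_1L^0$ and $O_2R^0$.

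\emph{What each buys.} In your version, independence of $(L_n,R_n)$ from $\Sigma_n$ and from $P_n$ is immediate, because the conditional law given $X_n$ is constant. You also never need a step the paper leaves implicit: that the regular conditional laws are invariant under the whole group for almost every singular-value vector. A priori one only has invariance for each fixed $(L,R)$, off a null set that depends on $(L,R)$. The price you pay is a Borel selection of the singular value decomposition. That selection is available: the set of $(X,L,\Sigma,R)$ with $X=L\Sigma R^*$ and $\Sigma$ ordered is closed with compact nonempty fibres, so the Kuratowski--Ryll-Nardzewski theorem gives a Borel selector. The arbitrary sign or phase conventions of that selector do no harm, because the Haar randomization erases them. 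This is exactly the artifact the paper's orbit formulation is designed to sidestep.

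One small wording point: $O_1L^0$ is Haar by \emph{right} invariance of Haar measure, not left. This is harmless, since Haar measure on a compact group is bi-invariant.
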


\begin{proof}
Condition on the ordered singular values.  Their orbit under the left-right compact group action is a compact homogeneous space, and it carries a unique invariant probability measure.  The pushforward of independent Haar measure on the two groups is that invariant measure.  Disintegrating the law of $X_n$ over its singular values gives the required equal-in-law representation.  This formulation avoids the artificial phase or sign dependence that can appear in a particular measurable singular-value decomposition.
\end{proof}

\begin{example}[Classes covered by rotational invariance]\label{ex:biunitary-classes}
The theorem includes:
\begin{enumerate}
  \item rectangular real or complex Gaussian matrices;
  \item matrices $L_nT_nR_n^*$ with deterministic singular values $T_n$ and independent Haar/Stiefel frames;
  \item any bi-unitarily invariant ensemble whose density depends only on the singular values, provided its empirical squared singular-value law and top edge converge.
\end{enumerate}
The signal directions may be deterministic, structured, or random, as long as they are independent of the noise singular-vector frames.
\end{example}

\section{Independent entries with deterministic signal directions}\label{sec:independent-entries}

This section gives a concrete non-Haar verification.  The argument has two inputs with different roles.  An anisotropic local law identifies the deterministic center of each resolvent bilinear form.  Logarithmic-Sobolev concentration then improves the tail to an exponential form strong enough to union-bound over an $o(n)$-dimensional net.

\subsection{An exponential-tail gateway}

\begin{proposition}[Exponential-tail anisotropic estimates imply a subspace law]\label{prop:exp-tail-gateway}
Let $K\Subset\C\setminus[-b,b]$.  Suppose that, conditionally on $P_n$ when the signal is random, there are deterministic $a_n(K)\downarrow0$ and constants $C_K,c_K>0$ such that, for every deterministic unit $x,y\in\F^{2r_n}$ and every $t\ge0$,
\begin{equation}\label{eq:exp-tail-aniso}
 \Pp\left(
   \sup_{\zeta\in K}
   \left|\ip{x}{\bigl(\calG_n(\zeta)-\calM(\zeta)\bigr)y}\right|
   >a_n(K)+t
   \ \middle|\ P_n
 \right)
 \le C_K e^{-c_Knt^2}+e^{-c_Kn}.
\end{equation}
Then \cref{ass:holo-SLL} holds with
\begin{equation}\label{eq:exp-gateway-rate}
  \eps_n(K)\le C_K'
  \left(a_n(K)+\sqrt{\frac{r_n}{n}}+\sqrt{\frac{\log n}{n}}\right).
\end{equation}
In particular, the error vanishes for every $r_n=o(n)$.
\end{proposition}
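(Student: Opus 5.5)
The plan is a standard $\varepsilon$-net argument on the unit sphere of $\F^{2r_n}$, applied to the random matrix $\Delta_n(\zeta):=\calG_n(\zeta)-\calM(\zeta)$.

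\emph{Net and reduction.} Fix a $\tfrac14$-net $\mathcal N$ of the unit sphere $S$ of $\F^{2r_n}$, with $|\mathcal N|\le 9^{4r_n}$ (real dimension at most $4r_n$). For any matrix $A$ the standard estimate gives
\[
\norm{A}_{\op}\le 2\max_{x,y\in\mathcal N}|\ip{x}{Ay}|.
\]
Write $x=x_0+x_1$ with $x_0\in\mathcal N$ and $|x_1|\le\tfrac14$, do the same for $y$, and absorb the error terms.

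\emph{Uniformity in $\zeta$.} Because the hypothesis \eqref{eq:exp-tail-aniso} already contains the supremum over $K$, no net in $\zeta$ is needed. We use
\[
\sup_{\zeta}\norm{\Delta_n(\zeta)}_{\op}\le 2\max_{x,y\in\mathcal N}\sup_{\zeta}|\ip{x}{\Delta_n(\zeta)y}|,
\]
which holds because the net reduction is applied pointwise in $\zeta$ with a fixed net.

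\emph{Union bound.} Conditionally on $P_n$, take $t=s\bigl(\sqrt{r_n/n}+\sqrt{\log n/n}\bigr)$ for a large constant $s$. Then
\[
\Pp\Bigl(\sup_\zeta\norm{\Delta_n}_{\op}>2(a_n+t)\Bigm|P_n\Bigr)\le 9^{8r_n}\bigl(C_Ke^{-c_Kns^2(r_n/n+\log n/n)}+e^{-c_Kn}\bigr).
\]
The first term is at most $C_K\exp\bigl(8r_n\log9-c_Ks^2r_n-c_Ks^2\log n\bigr)\to0$ once $c_Ks^2>8\log 9$. The second term is $\exp(8r_n\log9-c_Kn)\to0$ because $r_n=o(n)$. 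This is the step that actually uses sublinear rank, and it is the main point where care is needed: the additive $e^{-c_Kn}$ term carries no $t$-dependence, so it is $r_n=o(n)$, and not the choice of $t$, that defeats the net cardinality. Integrating over $P_n$ removes the conditioning.

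The result is \eqref{eq:holo-SLL} with
\[
\eps_n(K)=2a_n(K)+2s\bigl(\sqrt{r_n/n}+\sqrt{\log n/n}\bigr),
\]
which is deterministic, tends to zero, and is of the form \eqref{eq:exp-gateway-rate}. One technical point is that the hypothesis is stated for deterministic $x,y$. When $P_n$ (hence $\calE_n$) is random, we apply it conditionally, and the net $\mathcal N$ is fixed in advance in the coordinates of $\F^{2r_n}$. So no measurability issue arises.
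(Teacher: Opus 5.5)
Your proof is correct and matches the paper's argument: a fixed $\tfrac14$-net of the unit sphere of $\F^{2r_n}$ with at most $9^{4r_n}$ points, and a union bound over net pairs with $t\asymp\sqrt{r_n/n}+\sqrt{\log n/n}$ so that the Gaussian term beats the net cardinality. As in the paper, $9^{O(r_n)}e^{-c_Kn}=o(1)$ holds only because $r_n=o(n)$, and the factor-$2$ net estimate for the operator norm is applied pointwise in $\zeta$ (no mesh in $\zeta$ is needed since the hypothesis already contains $\sup_{\zeta\in K}$).
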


\begin{proof}
Let $\mathcal N_n$ be a $1/4$-net of the unit sphere in $\F^{2r_n}$.  It may be chosen with $|\mathcal N_n|\le9^{4r_n}$.  Apply \eqref{eq:exp-tail-aniso} to all pairs $(x,y)\in\mathcal N_n^2$ and take
\[
  t=C\left(\sqrt{\frac{r_n}{n}}+\sqrt{\frac{\log n}{n}}\right)
\]
with $C$ large.  A union bound makes the simultaneous failure probability tend to zero: the Gaussian-tail term dominates the net cardinality after the displayed choice of $t$, while $9^{O(r_n)}e^{-c_Kn}=o(1)$ because $r_n=o(n)$.  The standard net estimate for operator norms increases the bound by only an absolute factor.
\end{proof}

\subsection{Logarithmic-Sobolev independent entries}

\begin{assumption}[Independent entries with LSI concentration]\label{ass:lsi-entries}
Let
\begin{equation}\label{eq:iid-model}
   X_n=\frac1{\sqrt m}(\xi_{ij})_{1\le i\le n,\,1\le j\le m},
\end{equation}
where, in the real case, the entries are independent copies of a fixed centered variance-one random variable.  In the complex case, their real and imaginary parts are independent copies of fixed centered laws, each of variance $1/2$, so that $\E|\xi_{ij}|^2=1$.  Each one-dimensional coordinate law satisfies a logarithmic-Sobolev inequality with a finite constant.  The signal $P_n$ is deterministic or independent of $X_n$.
\end{assumption}

The LSI assumption is a concentration hypothesis, not a rotational-invariance hypothesis.  It is satisfied by Gaussian variables, by uniformly strongly log-concave laws, and by bounded perturbations of such laws through the Holley--Stroock principle~\cite{HolleyStroock1987}.  The fixed-law formulation lets us invoke the classical Marchenko--Pastur and Bai--Yin theorems directly.  The proof extends without change to triangular arrays whenever the global Marchenko--Pastur law, convergence of the top singular value, and the same uniform product concentration estimate are available.  More generally, LSI may be replaced by any product concentration inequality that yields the tail in \eqref{eq:lsi-bilinear-tail} below.

\begin{lemma}[Resolvent bilinear forms have exponential tails]\label{lem:lsi-resolvent-tail}
Under \cref{ass:lsi-entries}, let $K\Subset\C\setminus[-(1+\sqrt c),1+\sqrt c]$.  There is a deterministic $a_n(K)\downarrow0$ such that for every deterministic unit vectors $p,q\in\F^{n+m}$,
\begin{equation}\label{eq:lsi-bilinear-tail}
 \Pp\left(
   \sup_{\zeta\in K}
   \left|\ip{p}{\bigl(\calR_{X,n}(\zeta)-\Pi(\zeta)\bigr)q}\right|
   >a_n(K)+t
 \right)
 \le C_Ke^{-c_Knt^2}+e^{-c_Kn},
\end{equation}
where
\[
  \Pi(\zeta)=
  \begin{pmatrix}
    \phi_{\mathrm{MP},c}(\zeta)\Id_n&0\\
    0&\widetilde\phi_{\mathrm{MP},c}(\zeta)\Id_m
  \end{pmatrix}.
\]
\end{lemma}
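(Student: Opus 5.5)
The plan splits the bilinear form into a fluctuation and a bias,
\[
 \ip{p}{(\calR_{X,n}-\Pi)q}=\bigl(\ip{p}{\calR_{X,n}q}-\E[\ip{p}{\calR_{X,n}q}\one_{\calA}]\bigr)+\bigl(\E[\ip{p}{\calR_{X,n}q}\one_{\calA}]-\ip{p}{\Pi q}\bigr),
\]
where $\calA$ is a good event on which $\norm{X_n}_{\op}\le 1+\sqrt c+\eta$, with $\eta>0$ chosen so that $K$ stays at distance at least $2\eta$ from $[-(1+\sqrt c+\eta),1+\sqrt c+\eta]$. The bias term is absorbed into $a_n(K)$ and the fluctuation supplies the Gaussian tail. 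The term $e^{-c_Kn}$ in \eqref{eq:lsi-bilinear-tail} pays for $\calA^c$.

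\emph{Good event.} Tensorization of the LSI for the coordinates gives an LSI with constant $O(1)$ for the vector of all entries. Since $\norm{X_n}_{\op}$ is a $1/\sqrt m$-Lipschitz function of the entries, Herbst's argument gives $\Pp(\norm{X_n}_{\op}>\E\norm{X_n}_{\op}+s)\le e^{-cms^2}$. The Bai--Yin theorem \cite{YinBaiKrishnaiah1988}, together with uniform integrability from the same tail, gives $\E\norm{X_n}_{\op}\to1+\sqrt c$. Hence $\Pp(\calA^c)\le e^{-c_Kn}$ for large $n$.

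\emph{Fluctuation.} I would introduce a smooth truncation. Let $\chi$ be a cutoff equal to one on $[0,1+\sqrt c+\eta]$ and zero beyond $1+\sqrt c+2\eta$, and set $F_\zeta(X)=\chi(\norm{X}_{\op})\ip{p}{\calR_{X}(\zeta)q}$. This function is bounded and Lipschitz in the entries with constant $O_K(1/\sqrt m)$, since $\partial\calR=\calR(\partial\calX)\calR$ and $\norm{\calR}\le\eta^{-1}$ on the support of $\chi$. (It is cleaner to use a smooth cutoff of a Lipschitz proxy, or to note that $\ip p{\calR q}$ is Lipschitz on the convex set $\{\norm X_{\op}\le 1+\sqrt c+2\eta\}$ and then apply a Lipschitz extension.) Herbst then gives $\Pp(|F_\zeta-\E F_\zeta|>t)\le Ce^{-cnt^2}$ for each fixed $\zeta$. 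On $\calA$ we have $F_\zeta=\ip p{\calR q}$. To get the supremum over $K$, I would take a net of $K$ of mesh $n^{-2}$ with polynomially many points, union bound, and use the Lipschitz continuity in $\zeta$ on $\calA$. This costs a shift $t\mapsto t+C\sqrt{\log n/n}$, which I fold into $a_n(K)$.

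\emph{Bias.} It remains to show $\sup_{\zeta\in K}\sup_{p,q}|\E F_\zeta-\ip p{\Pi q}|\to0$, uniformly over deterministic unit vectors. This is the main obstacle, because it is where the anisotropic/isotropic local law enters. Because $K$ lies a fixed distance outside the spectrum, I would invoke the outside-spectrum isotropic law of \cite[Thms.~2.4--2.5]{BloemendalEtAl2014} for the diagonal blocks and the full block estimate of \cite[Thms.~3.6--3.7]{KnowlesYin2017} for the off-diagonal blocks. The translation is to change spectral variable $z=\zeta^2$ and rescale blocks: the diagonal blocks are $\zeta(\zeta^2-XX^*)^{-1}$, and the off-diagonal blocks $X(\zeta^2-X^*X)^{-1}$ are the rescaled off-diagonal blocks of their linearization. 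These laws give $|\ip p{\calR q}-\ip p{\Pi q}|\le n^{-\alpha}$ with overwhelming probability, uniformly over the compact set. LSI entries have all moments, so their hypotheses hold.

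Combined with boundedness of $F_\zeta$, this makes the expectation error $o(1)$ uniformly in $p,q$. For the off-diagonal block $\Pi$ is zero, so the law says mixed bilinear forms are small. A fallback that avoids the imported theorem is an elementary route for the mean: use the $\zeta$-holomorphy together with the Gaussian/cumulant expansion resolvent identity for $\E\calR$, giving $\E\calR=\Pi+O(n^{-1/2})$ in operator norm far from the spectrum. One would check first that the needed identity closes with LSI moment bounds. Either way, define $a_n(K)$ as the sum of the bias envelope and the net cost $C\sqrt{\log n/n}$; it is deterministic, independent of $p,q$, and tends to zero.
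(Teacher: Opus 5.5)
Your proposal is correct and follows essentially the same route as the paper's proof. Both arguments use an exponentially likely good event $\{\norm{X_n}_{\op}\le b+\eta\}$ obtained from Bai--Yin plus Herbst, and a globally Lipschitz modification of the resolvent bilinear form. The paper uses a McShane extension plus clipping where you use a smooth cutoff; your version has the small advantage that $\E F_\zeta$ is automatically Lipschitz in $\zeta$. Both then apply Herbst concentration at fixed $\zeta$ with a mesh in $\zeta$, whose cardinality is paid for by an additive $\sqrt{\log n/n}$ in $a_n(K)$. Finally, both center the mean through the Knowles--Yin and Bloemendal et al.\ local laws after the substitution $z=\zeta^2$.

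The one detail to add is that those local laws are centered at the finite aspect ratio $c_n=n/m$. Your bias bound is therefore $n^{-\alpha}$ plus the deterministic $o(1)$ difference between the $c_n$- and $c$-equivalents, and that difference must also be absorbed into $a_n(K)$.
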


\begin{proof}
We separate centering from concentration and give the details needed for the growing-dimensional union bound.

\emph{Step 1: a uniform good set.}  Set $b=1+\sqrt c$ and $\kappa_0:=\dist(K,[-b,b])>0$.  Set $B:=b+\kappa_0/8$ and
\[
   \kappa:=\dist(K,[-B,B])\ge \kappa_0/2,
\]
and let $\mathcal A_n:=\{X:\norm X_{\op}\le B\}$.  The fixed coordinate laws and their LSI bounds imply subgaussian tails of all coordinates.  The Bai--Yin edge theorem gives $\norm{X_n}_{\op}\to b$, and the concentration estimate below makes the operator norm uniformly integrable, so $\E\norm{X_n}_{\op}\to b$; see \cite{YinBaiKrishnaiah1988,Vershynin2018}.  Moreover, as a function of the unnormalized real coordinates $(\xi_{ij})$, the map
\[
   (\xi_{ij})\longmapsto
   \norm{m^{-1/2}(\xi_{ij})}_{\op}
\]
is $m^{-1/2}$-Lipschitz in Euclidean norm.  Product LSI and the Herbst argument~\cite{Ledoux2001} therefore imply, for the fixed $B>b$ and all large $n$,
\begin{equation}\label{eq:good-norm-tail}
   \Pp(X_n\notin\mathcal A_n)
   \le 2\exp\!\left[-c m\bigl(B-\E\norm{X_n}_{\op}\bigr)^2\right]
   \le e^{-c_Kn}.
\end{equation}

\emph{Step 2: a dimension-free Lipschitz bound.}  For $X,Y\in\mathcal A_n$, the resolvent identity gives
\begin{align*}
 &\left|\ip p{(\zeta\Id-\calX)^{-1}q}
       -\ip p{(\zeta\Id-\calY)^{-1}q}\right|\\
 &\qquad\le
   \norm{(\zeta\Id-\calX)^{-1}}_{\op}
   \norm{\calX-\calY}_{\op}
   \norm{(\zeta\Id-\calY)^{-1}}_{\op}
   \le \frac{\sqrt2}{\kappa^2}\norm{X-Y}_{\mathrm F}.
\end{align*}
Thus the real and imaginary parts of the bilinear resolvent form are $C_K$-Lipschitz on $\mathcal A_n$ in Frobenius norm.  Extend those two parts by McShane's theorem, combine them into a complex-valued map, and compose with the Euclidean projection onto the closed disk of radius $\kappa^{-1}$.  This produces a globally defined complex random variable $\widetilde F_{\zeta,p,q}$, still $C_K$-Lipschitz after changing the constant, which agrees with the original bilinear form on $\mathcal A_n$ and is deterministically bounded.  Since $X_n=m^{-1/2}(\xi_{ij})$, these extensions are $C_K/\sqrt m$-Lipschitz functions of the unnormalized real coordinates.

Tensorization of the logarithmic-Sobolev inequality and the Herbst argument~\cite{Ledoux2001} now give, for each fixed $\zeta,p,q$,
\begin{equation}\label{eq:fixed-z-lsi}
  \Pp\bigl(|\widetilde F_{\zeta,p,q}-\E\widetilde F_{\zeta,p,q}|>t\bigr)
  \le4e^{-c_Kmt^2}
  \le4e^{-c_K'nt^2}.
\end{equation}
For the original resolvent form, the same estimate holds after adding the exceptional probability in \eqref{eq:good-norm-tail}.

\emph{Step 3: centering on a polynomial mesh.}
The full block anisotropic local law of Knowles--Yin
\cite[Eq.~(1.5) and Thms.~3.6--3.7]{KnowlesYin2017}, applied to sample covariance matrices with identity population covariance, controls all four blocks of the linearized resolvent simultaneously and implies the following.  For every $D>0$ there are a polynomial mesh $\mathcal K_n\subset K$ and $b_n(K)\downarrow0$ such that, uniformly for deterministic unit vectors $p,q$ and $\zeta\in\mathcal K_n$,
\[
  \Pp\left(
    \left|\ip p{(\calR_{X,n}(\zeta)-\Pi(\zeta))q}\right|>b_n(K)
  \right)\le n^{-D}.
\]
The earlier isotropic Gram-resolvent results of Bloemendal--Erd\H{o}s--Knowles--Yau--Yin~\cite[Thms.~2.4--2.5 and Rem.~2.6]{BloemendalEtAl2014} provide the corresponding diagonal-block estimates, but the mixed block used here is taken from the full linearized law in~\cite{KnowlesYin2017}.
To relate the cited block linearization to ours, put $z=\zeta^2$.  With $G_{\mathrm{KY}}(z)$ denoting the linearization used in \cite{KnowlesYin2017}, one has, up to the harmless convention for the sign of the resolvent,
\[
 \calR_{X,n}(\zeta)
 =-\begin{pmatrix}\zeta^{-1}\Id_n&0\\0&\Id_m\end{pmatrix}
   G_{\mathrm{KY}}(\zeta^2)
   \begin{pmatrix}\Id_n&0\\0&\zeta\Id_m\end{pmatrix}.
\]
All scaling factors are uniformly bounded on $K$.  The cited theorem is naturally centered at the deterministic equivalent with the finite aspect ratio $c_n=n/m$; its uniform difference from $\Pi(\zeta)$ is $o(1)$ on $K$ because $c_n\to c$, and this deterministic error is absorbed into $b_n(K)$.  If $K$ meets the real axis outside the limiting support, the outside-spectrum formulation \cite[Thm.~3.7]{KnowlesYin2017}, or \cite[Thm.~2.5 and Rem.~2.6]{BloemendalEtAl2014} for the diagonal blocks, gives the same mesh estimate at the real points.  Alternatively, evaluate the law at $\zeta_0=x+\ii n^{-L}$: on the event $\{\norm{X_n}_{\op}\le B\}$ both resolvents are analytic in a $\kappa$-neighborhood of $x$, so the resolvent identity gives $|\ip p{(\calR_{X,n}(x)-\calR_{X,n}(\zeta_0))q}|\le\kappa^{-2}n^{-L}$, and $\Pi$ moves by the same order.

Fix $D$ large.  At every mesh point, on the intersection of the local-law event with $\mathcal A_n$, the extension equals the original bilinear form.  Since the clipped extensions are deterministically bounded, \eqref{eq:good-norm-tail} gives
\[
  \sup_{\|p\|=\|q\|=1}\sup_{\zeta\in\mathcal K_n}
  \left|\E\widetilde F_{\zeta,p,q}-\ip p{\Pi(\zeta)q}\right|
  \le b_n(K)+C_Kn^{-D}+C_Ke^{-c_Kn}.
\]
Choose the mesh spacing to be $n^{-L}$ for a fixed sufficiently large $L$.  For a general $\zeta\in K$, let $\zeta_0\in\mathcal K_n$ be a nearest mesh point.  Although the McShane extensions at different spectral parameters need not be jointly Lipschitz, on $\mathcal A_n$ they both equal the original resolvent forms.  Hence
\begin{align*}
 \left|\E\widetilde F_{\zeta,p,q}-\E\widetilde F_{\zeta_0,p,q}\right|
 &\le C_K|\zeta-\zeta_0|+C_K\Pp(\mathcal A_n^c),
\end{align*}
and $\Pi$ is uniformly Lipschitz on $K$.  It follows that
\begin{equation}\label{eq:uniform-centering}
  \sup_{\|p\|=\|q\|=1}\sup_{\zeta\in K}
  \left|\E\widetilde F_{\zeta,p,q}-\ip p{\Pi(\zeta)q}\right|
  \le a_n^{(0)}(K)
\end{equation}
for a deterministic $a_n^{(0)}(K)\downarrow0$.

\emph{Step 4: uniformity in $\zeta$.}
Set $\tau_n=C_K\sqrt{\log n/n}$, with $C_K$ large, and put
\[
  T_K:=1+\kappa^{-1}+\sup_{\zeta\in K}\norm{\Pi(\zeta)}_{\op}.
\]
First suppose $\tau_n\le t\le T_K$.  Choose a mesh $\mathcal K(t)$ of $K$ with spacing at most $c_Kt$; its cardinality is $O_K(1+t^{-2})$.  Apply \eqref{eq:fixed-z-lsi} to the extensions only at the mesh points.  Since $nt^2\ge C_K^2\log n$, the mesh cardinality is absorbed into the exponential tail, and with probability at least
\[
  1-C_Ke^{-c_Knt^2}-e^{-c_Kn}
\]
all mesh-point deviations from their expectations are at most $t/2$.  On $\mathcal A_n$, the extensions at the mesh points equal the original resolvent forms.  The resolvent derivative bound $\norm{\partial_\zeta\calR_{X,n}(\zeta)}_{\op}\le\kappa^{-2}$ and the corresponding Lipschitz bound for $\Pi$ then fill the gaps between mesh points.  Together with \eqref{eq:uniform-centering}, this yields
\[
 \sup_{\zeta\in K}
 \left|\ip p{(\calR_{X,n}(\zeta)-\Pi(\zeta))q}\right|
 \le a_n^{(0)}(K)+t
\]
outside the displayed exceptional event.  For $0\le t<\tau_n$, use the estimate at $t=\tau_n$ and monotonicity of the tail.  If $t>T_K$, then on $\mathcal A_n$ the left-hand side is at most $\kappa^{-1}+\sup_{\zeta\in K}\norm{\Pi(\zeta)}_{\op}<T_K$, so the event is empty; \eqref{eq:good-norm-tail} controls the complement.  Thus, with
\[
  a_n(K):=a_n^{(0)}(K)+\tau_n,
\]
we obtain \eqref{eq:lsi-bilinear-tail} for every $t\ge0$.
\end{proof}

\begin{theorem}[Independent-entry subspace law at rank $o(n)$]\label{thm:lsi-entry}
Assume \cref{ass:lsi-entries} and $r_n=o(n)$.  Then the global noise law holds with squared singular-value limit $\mathrm{MP}_c$ and edge $b=1+\sqrt c$, the edge transform is finite and nonzero, and the holomorphic subspace local law holds with the Marchenko--Pastur deterministic equivalent.  More precisely,
\begin{equation}\label{eq:lsi-SLL-rate}
  \eps_n(K)\le C_K\left(
    a_n(K)+\sqrt{\frac{r_n}{n}}+\sqrt{\frac{\log n}{n}}
  \right)
\end{equation}
for every compact $K$ outside the limiting noise spectrum.  Consequently, the value and singular-subspace theorems apply to every bounded deterministic signal and to every bounded signal independent of the noise; the empirical laws apply when their stated spike-measure hypothesis is also satisfied.
\end{theorem}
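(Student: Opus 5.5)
The plan is to combine \cref{lem:lsi-resolvent-tail} with the gateway \cref{prop:exp-tail-gateway}, after first checking the global inputs. For the global noise law, the classical Marchenko--Pastur theorem gives $\mu_{X_nX_n^*}\weak\mathrm{MP}_c$ in probability. Bai--Yin~\cite{YinBaiKrishnaiah1988} gives $\norm{X_n}_{\op}\to1+\sqrt c$; the fixed entry laws have LSI and hence subgaussian tails, so the moment hypotheses are met. This verifies \cref{ass:global-noise} with $b=1+\sqrt c$. Finiteness and positivity of $d(b+)$ will follow from the explicit Marchenko--Pastur formulas of \cref{sec:MP}. Concretely, $\phi_{\mathrm{MP},c}$ and $\widetilde\phi_{\mathrm{MP},c}$ have finite positive limits at the square-root edge, since the density vanishes like $\sqrt{b^2-x}$ and the Stieltjes transform stays bounded there. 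This gives \cref{ass:edge}.

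For the subspace law, fix a compact $K$ outside $[-b,b]$, and condition on $P_n$ when the signal is random; by independence $X_n$ keeps its law. For deterministic unit $x,y\in\F^{2r_n}$, set $p=\calE_nx$ and $q=\calE_ny$. These are unit vectors in $\F^{n+m}$ because $\calE_n^*\calE_n=\Id_{2r_n}$, and they are deterministic given $P_n$. Then
\[
\ip x{(\calG_n-\calM)y}=\ip p{(\calR_{X,n}-\calE_n\calM\calE_n^*)q}.
\]
The key observation is that $\calE_n\calM(\zeta)\calE_n^*$ agrees with the block matrix $\Pi(\zeta)$ of \cref{lem:lsi-resolvent-tail} on the range of $\calE_n$. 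Indeed, $\Pi$ is block-scalar with the same scalars $\phi=\phi_{\mathrm{MP},c}$ and $\widetilde\phi=\widetilde\phi_{\mathrm{MP},c}$, and $\calE_n$ is block diagonal, so $\calE_n^*\Pi\calE_n=\calM$. Hence the bilinear tail \eqref{eq:lsi-bilinear-tail} is exactly hypothesis \eqref{eq:exp-tail-aniso}, uniformly in $\zeta\in K$ and with the same $a_n(K)$.

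\Cref{prop:exp-tail-gateway} then gives \eqref{eq:lsi-SLL-rate}, which vanishes whenever $r_n=o(n)$. The real law follows by restriction. Weighted compatibility follows from \cref{rem:weighted} for bounded signals, since $\norm{\Theta_n}_{\op}\eps_n(K)\to0$. The consequences are then read off from \cref{thm:value-matching,thm:projectors,thm:empirical-laws}. The stable windows needed there are supplied by \cref{lem:windows-stable}.

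I expect the main obstacle to be making sure the gateway's union bound is legitimate. The net vectors $x,y$ must be deterministic (conditionally on $P_n$) when \eqref{eq:exp-tail-aniso} is applied, and the constants $C_K,c_K$ and the centering error $a_n(K)$ must be uniform over all unit $p,q$. Both are built into \cref{lem:lsi-resolvent-tail}: its centering bound \eqref{eq:uniform-centering} is a supremum over unit vectors, and the Lipschitz constant in Step 2 is dimension-free. So the argument reduces to checking the identification $\calE_n^*\Pi\calE_n=\calM$ and tracking that every estimate is uniform in the vectors.
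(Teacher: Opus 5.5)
Your proposal is correct and is essentially the paper's own proof. The global law comes from Marchenko--Pastur and Bai--Yin and the edge condition from \cref{prop:MP-transform}; then one conditions on $P_n$, applies \cref{lem:lsi-resolvent-tail} to $\calE_nx,\calE_ny$ and invokes \cref{prop:exp-tail-gateway}, and your explicit check that $\calE_n^*\Pi(\zeta)\calE_n=\calM(\zeta)$ (indeed $\Pi\calE_n=\calE_n\calM$) is exactly the identification the paper leaves implicit.
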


\begin{proof}
The Marchenko--Pastur law and convergence of the top singular value follow from the classical i.i.d.\ theory; the fixed LSI coordinate laws are subgaussian~\cite{MarchenkoPastur1967,YinBaiKrishnaiah1988,BaiSilverstein2010,Vershynin2018}.  Condition on $P_n$ if it is random.  Apply \cref{lem:lsi-resolvent-tail} to vectors of the form $\calE_nx$ and $\calE_ny$, and invoke \cref{prop:exp-tail-gateway}.  The Marchenko--Pastur calculation in \cref{prop:MP-transform} verifies the edge condition.
\end{proof}

\begin{remark}[The genuinely random-matrix step]\label{rem:genuine-rmt}
The transfer from a subspace law to the outlier equation is deterministic.  In \cref{thm:lsi-entry}, however, the subspace law is proved for a non-Haar ensemble and a growing deterministic signal space.  The anisotropic local law alone controls one prescribed pair of vectors; the LSI argument supplies a tail strong enough to control exponentially many pairs simultaneously.  This is the probabilistic step that allows the signal rank to be any $o(n)$ rather than merely fixed or logarithmic.
\end{remark}

\subsection{What follows from an overwhelming-probability local law alone}

\begin{corollary}[Logarithmic rank from polynomial tails]\label{cor:poly-local-law}
Suppose that for every compact $K\Subset\C\setminus[-b,b]$ and every $D>0$, an anisotropic local law gives a deterministic $a_n(K)\downarrow0$ such that
\begin{equation}\label{eq:overwhelming-law}
 \sup_{\|p\|=\|q\|=1}
 \Pp\left(
   \sup_{\zeta\in K}
   \left|\ip{p}{(\calR_{X,n}(\zeta)-\Pi(\zeta))q}\right|>a_n(K)
 \right)\le n^{-D}
\end{equation}
for all large $n$, where the supremum means that the same bound holds for every deterministic pair.  Assume that the signal frames $U_n,V_n$ are deterministic, or independent of $X_n$ so that one may condition on them.  If $r_n\le A\log n$ for a fixed $A<\infty$, then the holomorphic subspace local law holds on the signal space.
\end{corollary}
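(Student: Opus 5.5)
The plan is a direct net argument on the $2r_n$-dimensional signal space, mirroring the proof of \cref{prop:exp-tail-gateway} but with the polynomial tail \eqref{eq:overwhelming-law} in place of the exponential one. Condition on $(U_n,V_n)$ if they are random, so that $\calE_n$ is deterministic. For deterministic unit $x,y\in\F^{2r_n}$, the vectors $p=\calE_nx$ and $q=\calE_ny$ are deterministic unit vectors in $\F^{n+m}$, because $\calE_n^*\calE_n=\Id_{2r_n}$. Moreover
\[
  \ip{x}{(\calG_n(\zeta)-\calE_n^*\Pi(\zeta)\calE_n)y}=\ip{p}{(\calR_{X,n}(\zeta)-\Pi(\zeta))q}.
\]
Since $\Pi$ is block scalar, $\calE_n^*\Pi(\zeta)\calE_n$ is exactly $\calM(\zeta)$ with the Marchenko--Pastur transforms; more generally, it is the scalar equivalent whose blocks are the diagonal entries of $\Pi$. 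This identifies the compressed error with a family of bilinear forms of the type controlled by \eqref{eq:overwhelming-law}.

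Next I would fix a $1/4$-net $\mathcal N_n$ of the unit sphere of $\F^{2r_n}$ with $|\mathcal N_n|\le 9^{4r_n}$; the complex case is viewed as real dimension $4r_n$. Apply \eqref{eq:overwhelming-law} to every pair $(x,y)\in\mathcal N_n^2$. The union bound gives failure probability at most
\[
  9^{8r_n}n^{-D}\le n^{8A\log 9-D},
\]
using $r_n\le A\log n$. Choosing $D>8A\log 9+1$ makes this $o(1)$. This is precisely where logarithmic rank enters: the net has cardinality $e^{O(r_n)}$, which is only polynomial in $n$ when $r_n=O(\log n)$. Because the supremum over $\zeta\in K$ is already inside the probability in \eqref{eq:overwhelming-law}, no separate mesh in the spectral parameter is needed.

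On the complementary event we have $\sup_{\zeta\in K}|\ip{x}{(\calG_n-\calM)y}|\le a_n(K)$ for all net pairs simultaneously. The standard net estimate for operator norms then bounds the operator norm of the error. For a matrix $B$, $\norm{B}_{\op}\le(1-1/4)^{-2}\max_{x,y\in\mathcal N_n}|\ip xBy|$, which can be checked by approximating the maximizing pair and absorbing the error terms. Applied pointwise in $\zeta$, this gives $\sup_{\zeta\in K}\norm{\calG_n(\zeta)-\calM(\zeta)}_{\op}\le\tfrac{16}{9}a_n(K)=:\eps_n(K)\downarrow0$.

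If the statement is meant with $\calM$ built from the limiting $\phi,\widetilde\phi$ while $\Pi$ is the finite-$n$ or limiting MP equivalent, any deterministic discrepancy between them is uniformly $o(1)$ on $K$ and can be absorbed into $\eps_n(K)$. That proves \cref{ass:holo-SLL}.

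The only real obstacle is bookkeeping rather than substance. One must make sure the constant implicit in ``for all large $n$'' in \eqref{eq:overwhelming-law} is uniform over the net pairs, which the supremum over $p,q$ guarantees. One must also choose $D$ depending on $A$ before taking $n$ large.
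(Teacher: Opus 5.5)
Your proposal is correct and is essentially the paper's own argument. It conditions on the frames, identifies $\ip{x}{(\calG_n-\calM)y}$ with $\ip{\calE_nx}{(\calR_{X,n}-\Pi)\calE_ny}$, puts a $1/4$-net on the unit sphere of $\F^{2r_n}$, takes a union bound over the at most $9^{8r_n}\le n^{CA}$ net pairs with $D$ chosen large depending on $A$, and then passes from the net to the operator norm. Your extra bookkeeping is also sound: the ``large $n$'' threshold is uniform over pairs, the constant $(1-1/4)^{-2}$ is valid, and any deterministic discrepancy between $\calE_n^*\Pi\calE_n$ and $\calM$ can be absorbed into $\eps_n(K)$.
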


\begin{proof}
Use a $1/4$-net of the unit sphere in $\F^{2r_n}$.  Its pair count is at most $9^{8r_n}\le n^{C A}$.  Choose $D>CA+2$ in \eqref{eq:overwhelming-law}, take a union bound, and pass from the net to the operator norm.
\end{proof}

\begin{remark}[Independent entries beyond LSI]\label{rem:beyond-lsi}
Standard independent-entry sample covariance ensembles with sufficiently many moments satisfy anisotropic local laws of the form \eqref{eq:overwhelming-law}.  Therefore deterministic signals of rank $O(\log n)$ are covered without an LSI assumption.  Any improvement of the one-vector tail immediately enlarges the admissible rank through \cref{prop:exp-tail-gateway}; the transfer theorems themselves require no change.
\end{remark}

\section{The Marchenko--Pastur model and explicit examples}\label{sec:MP}

\subsection{Explicit transforms, locations, and overlaps}
Let $X_n$ have independent real $N(0,1/m)$ or complex $\mathcal N_{\C}(0,1/m)$ entries.  More generally, the formulas below apply whenever the limiting squared singular-value law is the Marchenko--Pastur law $\mathrm{MP}_c$.

Set
\begin{equation}\label{eq:MP-edges}
  a_\pm=(1\pm\sqrt c)^2,
  \qquad b=1+\sqrt c,
\end{equation}
and, for $z>b$,
\begin{equation}\label{eq:Delta-MP}
  \Delta(z):=\sqrt{(z^2-a_-)(z^2-a_+)},
\end{equation}
with the positive branch.

\begin{proposition}[Marchenko--Pastur $D$-transform]\label{prop:MP-transform}
For $z>b$,
\begin{align}
 \phi(z)&=\frac{z^2+c-1-\Delta(z)}{2cz},\label{eq:MP-phi}\\
 \widetilde\phi(z)&=\frac{z^2-c+1-\Delta(z)}{2z},\label{eq:MP-phitilde}\\
 d(z)&=\frac{2}{z^2-(1+c)+\Delta(z)}.\label{eq:MP-d}
\end{align}
Consequently,
\begin{equation}\label{eq:MP-threshold}
  d(b+)=\frac1{\sqrt c},
  \qquad \theta_\star=c^{1/4}.
\end{equation}
For $\theta>c^{1/4}$,
\begin{align}
 \rho(\theta)^2
 &=\theta^2+(1+c)+\frac c{\theta^2}
 =\frac{(1+\theta^2)(c+\theta^2)}{\theta^2},\label{eq:MP-location}\\
 \cL(\theta)
 &=\frac{1-c/\theta^4}{1+c/\theta^2},\label{eq:MP-left-overlap}\\
 \cR(\theta)
 &=\frac{1-c/\theta^4}{1+1/\theta^2},\label{eq:MP-right-overlap}\\
 \cLR(\theta)
 &=\sqrt{\frac{1-c/\theta^4}{1+c/\theta^2}}
   \sqrt{\frac{1-c/\theta^4}{1+1/\theta^2}}.\label{eq:MP-cross-overlap}
\end{align}
\end{proposition}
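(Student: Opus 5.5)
The plan is to compute $\phi$ directly from the Stieltjes transform of $\mathrm{MP}_c$ and derive everything else algebraically from \eqref{eq:phi-def}, \eqref{eq:threshold-map} and \eqref{eq:overlap-weights}. With $s(w):=\int(w-x)^{-1}\mathrm{MP}_c(\dd x)$ for $w=z^2>a_+$, the Marchenko--Pastur self-consistent equation $cws^2-(w+c-1)s+1=0$ has the root decaying at infinity
\[
 s(w)=\frac{w+c-1-\sqrt{(w-a_-)(w-a_+)}}{2cw}.
\]
Since $\phi(z)=z\,s(z^2)$, this gives \eqref{eq:MP-phi}. Substituting into $\widetilde\phi=c\phi+(1-c)/z$ gives \eqref{eq:MP-phitilde}. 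For the product, write $\phi\widetilde\phi=\frac{(z^2+c-1-\Delta)(z^2-c+1-\Delta)}{4cz^2}$. Then use $\Delta^2=(z^2-1-c)^2-4c$, so that the numerator equals $2\bigl(z^2-\Delta\bigr)(z^2-(1+c))+\ldots$. I expect this to simplify after rationalization to $2/(z^2-(1+c)+\Delta)$, and the identity
\[
(z^2-(1+c)+\Delta)(z^2-(1+c)-\Delta)=4c
\]
is the key step for that. I will check the branch sign by comparing asymptotics: $d(z)\sim z^{-2}$ as $z\to\infty$, as required by $\phi\sim\widetilde\phi\sim z^{-1}$.

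The edge value follows at once. At $z=b$ we have $z^2=a_+$ and $\Delta=0$, so $d(b+)=2/\bigl((1+\sqrt c)^2-(1+c)\bigr)=1/\sqrt c$, and hence $\theta_\star=c^{1/4}$. For the location, I solve $d(\rho)=\theta^{-2}$, i.e.\ $\rho^2-(1+c)+\Delta=2\theta^2$. By the conjugate identity, $\rho^2-(1+c)-\Delta=2c/\theta^2$. Adding the two equations gives $\rho^2=\theta^2+1+c+c/\theta^2$, which factors as stated. I also record $\Delta(\rho)=\theta^2-c/\theta^2$, which is positive exactly when $\theta>c^{1/4}$, consistent with the positive branch.

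For the overlaps I need $\phi(\rho)$, $\widetilde\phi(\rho)$ and $d'(\rho)$. From the formulas above, $\rho^2+c-1-\Delta=2c+2c/\theta^2$, so $\phi(\rho)=(1+\theta^{-2})c/\rho$; similarly $\widetilde\phi(\rho)=(1+c\theta^{-2})/\rho$. The main computational obstacle is $d'$. I would differentiate in $w=z^2$: writing $d=2/(w-(1+c)+\Delta)$ with $\partial_w\Delta=(w-1-c)/\Delta$, I get
\[
 d'(z)=-\frac{2\cdot 2z\,(1+(w-1-c)/\Delta)}{(w-(1+c)+\Delta)^2}.
\]
At $\rho$ the denominator is $4\theta^4$, and $1+(w-1-c)/\Delta=(\Delta+\theta^2+c\theta^{-2})/\Delta=2\theta^2/\Delta$. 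This gives $d'(\rho)=-2\rho/(\theta^2\Delta)=-2\rho/(\theta^4-c)$.

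Plugging into \eqref{eq:overlap-weights} gives
\[
\cL=\frac{2\phi(\rho)(\theta^4-c)}{2\rho\theta^2}=\frac{c(1+\theta^{-2})(\theta^4-c)}{\rho^2\theta^2}.
\]
Dividing by $\rho^2=(1+\theta^2)(c+\theta^2)/\theta^2$ leaves $\cL=c(\theta^4-c)/\bigl(\theta^2(c+\theta^2)\bigr)$. This disagrees with \eqref{eq:MP-left-overlap} by a factor $c$. The discrepancy suggests that the roles of $\phi$ and $\widetilde\phi$ (normalization by $n$ versus $m$) must be double-checked, and I flag this bookkeeping as the place to be careful. The intended outcome is $(1-c\theta^{-4})/(1+c\theta^{-2})$ for $\cL$, and the analogous computation for $\cR$. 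Finally, \eqref{eq:MP-cross-overlap} is immediate from \eqref{eq:cross-weight}.
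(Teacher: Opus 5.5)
\textbf{There is a genuine gap, but it is an arithmetic slip, not a problem in the paper.} Your route is the paper's own: the Stieltjes transform of $\mathrm{MP}_c$, then the conjugate identity $(z^2-(1+c)+\Delta)(z^2-(1+c)-\Delta)=4c$, then solving $d(\rho)=\theta^{-2}$, then differentiating \eqref{eq:MP-d}. Your values of $\rho(\theta)^2$, $\Delta(\rho)=\theta^2-c/\theta^2$, $\widetilde\phi(\rho)=(1+c\theta^{-2})/\rho$ and $d'(\rho)=-2\rho/(\theta^4-c)$ are all correct. For the step you left as ``$+\ldots$'': using $\Delta^2=(z^2-1-c)^2-4c$, the numerator of $\phi\widetilde\phi$ is exactly $2z^2\bigl(z^2-(1+c)-\Delta\bigr)$. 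Hence $d=(z^2-(1+c)-\Delta)/(2c)$, and the conjugate identity turns this into \eqref{eq:MP-d}.

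The factor-$c$ discrepancy comes from your evaluation of $\phi(\rho)$. You correctly found $\rho^2+c-1-\Delta(\rho)=2c(1+\theta^{-2})$. But \eqref{eq:MP-phi} divides by $2c\rho$, not $2\rho$, so
\[
 \phi(\rho)=\frac{2c(1+\theta^{-2})}{2c\rho}=\frac{1+\theta^{-2}}{\rho}.
\]
The check $\phi(\rho)\widetilde\phi(\rho)=d(\rho)=\theta^{-2}$ would have caught this at once, since your values give $c\theta^{-2}$. With the corrected value,
\[
 \cL(\theta)=\frac{\phi(\rho)(\theta^4-c)}{\rho\theta^2}
 =\frac{(1+\theta^2)(\theta^4-c)}{\theta^4\rho^2}
 =\frac{\theta^4-c}{\theta^2(\theta^2+c)}
 =\frac{1-c/\theta^4}{1+c/\theta^2}.
\]
The same substitution with your $\widetilde\phi(\rho)$ gives $\cR(\theta)=(\theta^4-c)/\bigl(\theta^2(\theta^2+1)\bigr)$, which is \eqref{eq:MP-right-overlap}.

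As submitted, however, the proposal does not prove \eqref{eq:MP-left-overlap}. It derives a value that contradicts it and then asserts the ``intended outcome'' without a derivation. It also suspects an $n$-versus-$m$ normalization mix-up that does not exist: $\widetilde\phi$ is defined as $c\phi+(1-c)/\zeta$ in \eqref{eq:phi-def}, and your formula for $\widetilde\phi(\rho)$ already uses this correctly. In addition, \eqref{eq:MP-right-overlap} is never actually computed.

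Once the slip is fixed and $\cR$ is carried out, \eqref{eq:MP-cross-overlap} follows from \eqref{eq:cross-weight}. It also follows directly from $\cLR(\theta)=-2/(\theta^3d'(\rho))=(\theta^4-c)/(\rho\theta^3)$ together with $\rho\theta=\sqrt{(1+\theta^2)(c+\theta^2)}$. Your argument then coincides with the paper's.
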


\begin{proof}
The Stieltjes transform of $\mathrm{MP}_c$ gives \eqref{eq:MP-phi}, and \eqref{eq:MP-phitilde} follows from
$\widetilde\phi=c\phi+(1-c)/z$.  Multiplication gives \eqref{eq:MP-d}.  At $z=b$, $\Delta(b)=0$, which yields \eqref{eq:MP-threshold}.

If
\[
  \lambda=\theta^2+(1+c)+\frac c{\theta^2},
\]
then
\[
  \sqrt{(\lambda-a_-)(\lambda-a_+)}
  =\theta^2-\frac c{\theta^2}
\]
for $\theta>c^{1/4}$.  Substitution into \eqref{eq:MP-d} gives $d(\sqrt\lambda)=\theta^{-2}$ and hence \eqref{eq:MP-location}.  Differentiating \eqref{eq:MP-d} and substituting into \eqref{eq:overlap-weights} gives \eqref{eq:MP-left-overlap}--\eqref{eq:MP-cross-overlap}.
\end{proof}

\begin{corollary}[Gaussian mesoscopic signals]\label{cor:gaussian-general}
Let $X_n$ be rectangular real or complex Gaussian noise, and let $P_n$ be deterministic or independent of $X_n$.  If $r_n=o(n)$ and the spike strengths are bounded, then the value, projector, isolated-vector, and cluster theorems hold with the explicit formulas \eqref{eq:MP-location}--\eqref{eq:MP-cross-overlap}.  The empirical laws hold as well when $r_n\to\infty$ and their stated spike-measure convergence is satisfied.
\end{corollary}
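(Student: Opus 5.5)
The plan is to verify the hypotheses of the transfer theorems for Gaussian noise and then substitute the explicit Marchenko--Pastur formulas of \cref{prop:MP-transform}. Rectangular real or complex Gaussian noise with $N(0,1/m)$ entries is bi-unitarily invariant. By \cref{prop:orbit-measure}, $(X_n,P_n)$ therefore admits the Haar singular-vector realization of \cref{ass:haar-noise}, and independence from $P_n$ is preserved. A deterministic $P_n$ is trivially independent.

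The global law and edge (\cref{ass:global-noise}) hold with $\mu=\mathrm{MP}_c$ and $b=1+\sqrt c$ by the Marchenko--Pastur and Bai--Yin theorems. Since Gaussian coordinates satisfy a logarithmic-Sobolev inequality, this is also contained in \cref{thm:lsi-entry}. \Cref{thm:haar-noise} then gives the holomorphic subspace local law \cref{ass:holo-SLL} for $r_n=o(n)$ on every compact $K\Subset\C\setminus[-b,b]$. The rate is \eqref{eq:random-orientation-rate}, so $\eps_n(K)\to0$. Alternatively, the same conclusion follows from \cref{thm:lsi-entry} through the LSI lifting. The real law \cref{ass:real-SLL} follows by restriction. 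Because the strengths are bounded, \cref{rem:weighted} gives \cref{ass:weighted-compatibility}. The edge condition \cref{ass:edge} holds because \eqref{eq:MP-threshold} gives $d(b+)=c^{-1/2}\in(0,\infty)$.

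With all hypotheses in place, the conclusions follow from earlier results. \Cref{thm:value-matching} supplies the values. \Cref{thm:projectors} supplies the projectors, with fixed separated windows being stable by \cref{lem:windows-stable}. \Cref{cor:isolated-vector} gives the isolated vectors and \cref{cor:cluster-vectors} the clusters. When $r_n\to\infty$ and $H_n^+$ converges vaguely, \cref{thm:empirical-laws} gives the empirical laws.

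It remains to identify the scalar map and weights with \eqref{eq:MP-location}--\eqref{eq:MP-cross-overlap}. This is exactly \cref{prop:MP-transform}: $\rho(\theta)^2=(1+\theta^2)(c+\theta^2)/\theta^2$, with the weights \eqref{eq:overlap-weights} evaluated via the derivative of \eqref{eq:MP-d}.

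The only point needing care is the derivative computation in \cref{prop:MP-transform}, which I take as given. As a sanity check, I would differentiate $d=2/(z^2-(1+c)+\Delta)$ at $z=\rho$, where $\Delta(\rho)=\theta^2-c/\theta^2$. This gives $d'(\rho)=-2\rho(1+\rho^2/\Delta)\theta^4/2\cdot\theta^{-4}\cdot\ldots$, and I would then confirm that $-2\phi(\rho)/(\theta^2 d'(\rho))$ simplifies to $(1-c/\theta^4)/(1+c/\theta^2)$.

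I would also check the complex case. There the orbit realization uses unitary groups, and every step above is insensitive to the choice of field $\F$.
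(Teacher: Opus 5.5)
Your proposal is correct and follows the paper's own proof. Bi-unitary invariance of Gaussian noise gives the Haar singular-vector realization, so \cref{thm:haar-noise} applies, with \cref{thm:lsi-entry} as the alternative LSI route; the transfer theorems then hold, and \cref{prop:MP-transform} supplies the explicit formulas. Your side sanity check of the derivative is garbled as written: the clean identity is $d'(z)=-2z\,d(z)/\Delta(z)$, so $d'(\rho)=-2\rho/(\theta^4-c)$, and combined with $\phi(\rho)=(1+\theta^{-2})/\rho$ this yields \eqref{eq:MP-left-overlap} directly, but the check plays no role in the argument.
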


\begin{proof}
Gaussian noise is bi-unitarily invariant, so \cref{thm:haar-noise} applies.  Alternatively, Gaussian coordinates satisfy the LSI hypothesis in \cref{thm:lsi-entry}.
\end{proof}

\subsection{Examples of admissible mesoscopic signals}
The examples below are stated for Marchenko--Pastur noise for concreteness, but the same constructions apply to any limiting law satisfying \cref{ass:edge}.

\begin{example}[A repeated mesoscopic block]\label{ex:repeated-block}
Fix $\alpha\in(0,1)$ and $\theta_0>c^{1/4}$, and let
\[
  r_n=\lfloor n^\alpha\rfloor,
  \qquad \Theta_n=\theta_0\Id_{r_n}.
\]
Then $W_n$ has $r_n$ outliers at
\[
  \rho(\theta_0)+o_{\Pp}(1)
\]
uniformly.  Fix any stable interval $J\Subset(b,\infty)$ containing $\rho(\theta_0)$ and no other deterministic outlier location.  Individual singular vectors inside this block are not identifiable, but the empirical left and right outlier subspaces satisfy
\begin{align*}
 \norm{U_n^*\widehat\Pi_n^{\mathrm L}(J)U_n-\cL(\theta_0)\Id_{r_n}}_{\op}&\to0,\\
 \norm{V_n^*\widehat\Pi_n^{\mathrm R}(J)V_n-\cR(\theta_0)\Id_{r_n}}_{\op}&\to0.
\end{align*}
The common-rotation conclusion of \cref{cor:cluster-vectors} is the correct vector statement for the repeated block.
\end{example}

\begin{example}[Two unresolved clusters]\label{ex:two-clusters}
Let $r_n=\lfloor n^{2/3}\rfloor$, choose $p\in(0,1)$, and let the multiset of spike strengths consist of
\[
 \theta_2+n^{-1/3}b_{n,i}
 \quad(1\le i\le\lfloor pr_n\rfloor),
 \qquad
 \theta_1+n^{-1/3}a_{n,i}
 \quad(\lfloor pr_n\rfloor<i\le r_n),
\]
where $\theta_2>\theta_1>c^{1/4}$ and the arrays $a_{n,i},b_{n,i}$ are uniformly bounded; arrange the resulting strengths in nonincreasing order, as required by the standing convention.  No spacing is assumed within either cluster.  For large $n$, fixed disjoint windows around $\rho(\theta_1)$ and $\rho(\theta_2)$ are stable by \cref{lem:windows-stable}, so each cluster produces the correct number of outliers near $\rho(\theta_1)$ or $\rho(\theta_2)$, and its left/right subspaces have the weights $\cL(\theta_j)$ and $\cR(\theta_j)$.
\end{example}

\begin{example}[A continuous strength profile]\label{ex:continuous-profile}
Let $r_n\to\infty$, $r_n=o(n)$, and choose deterministic strengths whose empirical measure converges to a probability law $H$ supported on
$[c^{1/4}+\delta,M]$.  Then
\[
  \nu_n^{\mathrm{out}}\vague\rho_\#H,
\]
and the left/right overlap measures converge to
\[
  \rho_\#(\cL H),
  \qquad
  \rho_\#(\cR H).
\]
This describes simultaneously a continuum of outlier locations and a continuum of asymptotic estimation accuracies.
\end{example}

\begin{example}[A mixed subcritical and supercritical cloud]\label{ex:mixed-cloud}
Suppose $H_n\weak H$ on $[0,M]$, where $H$ may place arbitrary mass below or at $c^{1/4}$ and has a nonzero restriction above $c^{1/4}$.  Only the restricted measures $H_n^+$ need converge vaguely for the superedge conclusions.  Subcritical mass produces no outlier mass on compact subsets of $(1+\sqrt c,\infty)$, while the supercritical part is pushed forward by $\rho$ and weighted by \eqref{eq:MP-left-overlap} and \eqref{eq:MP-right-overlap}.
\end{example}

\begin{example}[Fixed rank as a special case]\label{ex:fixed-rank}
Nothing in the local transfer or projector theorem requires $r_n\to\infty$.  If $r_n=r$ is fixed, \cref{cor:isolated-vector} recovers the usual fixed-rank outlier and overlap limits.  Thus the theory is continuous across fixed, slowly growing, and mesoscopic rank; only the empirical normalization changes.
\end{example}

\begin{example}[A diverging background of stronger spikes]\label{ex:growing-strengths}
Suppose the subspace-law rate is
\[
  \eps_n\asymp\sqrt{\frac{r_n}{n}}+\sqrt{\frac{\log n}{n}},
\]
and consider a fixed superedge window containing a bounded group of spike strengths.  The remaining spikes need not be uniformly bounded: the local projector theorem remains valid provided
\[
  \norm{\Theta_n}_{\op}\eps_n\longrightarrow0.
\]
For example, if $r_n=n^\alpha$ with $0<\alpha<1$, the largest background strength may be $o(n^{(1-\alpha)/2})$, up to the logarithmic term.  The corresponding very large outliers lie outside the fixed window, but they do not invalidate the local conclusions for the bounded cluster.  This is the precise sense in which weighted compatibility is weaker than a blanket bound on every spike.
\end{example}

\begin{example}[Structured deterministic directions]\label{ex:structured-directions}
Under Gaussian or LSI independent-entry noise, $U_n$ and $V_n$ may consist of coordinate vectors, Fourier modes, wavelets, graph eigenvectors, or any other deterministic orthonormal frames.  No incoherence or delocalization of the signal directions is required for the first-order results, because the anisotropic local law and concentration estimates are uniform over deterministic unit vectors.
\end{example}

\begin{example}[Concrete non-Gaussian LSI noises]\label{ex:non-gaussian-lsi}
After centering and rescaling to variance one, \cref{thm:lsi-entry} applies to i.i.d.\ entries with density proportional to $e^{-V(x)}$ whenever $V''\ge\kappa>0$.  This includes, for example, densities proportional to $e^{-x^2/2-ax^4}$ with $a>0$.  It also includes bounded perturbations $e^{-V(x)-W(x)}$ when $V''\ge\kappa>0$ and $W$ has bounded oscillation.  Real and imaginary coordinates may be chosen independently from such families in the complex model.
\end{example}

\begin{example}[Rademacher and bounded-entry noise at logarithmic rank]\label{ex:rademacher-log-rank}
Rademacher entries and, more generally, centered bounded i.i.d.\ entries satisfy the standard overwhelming-probability anisotropic Marchenko--Pastur law.  Hence \cref{cor:poly-local-law} covers arbitrary deterministic signal frames of rank $O(\log n)$ with bounded strengths, or with the corresponding weighted compatibility, even though the continuous-gradient LSI argument in \cref{thm:lsi-entry} is not being invoked.
\end{example}

\begin{example}[Random strengths and random but non-Haar directions]\label{ex:random-signal-independent}
In the independent-entry model of \cref{thm:lsi-entry}, the entire signal $P_n$ may be random, including its strengths and directions, provided it is independent of $X_n$ and satisfies the rank and compatibility conditions with high probability.  Conditioning on $P_n$ reduces the proof to deterministic frames.  Thus Haar-distributed signal directions are not required in this model class.
\end{example}

\section{Matrix-valued deterministic equivalents}\label{sec:matrix-valued}

The scalar subspace law is exactly what produces a universal one-dimensional outlier map $\rho$.  Some variance-profile or correlated ensembles have a deterministic resolvent equivalent that is not scalar on the signal spaces.  Such models are not failures of the transfer method; they simply have a genuinely matrix-valued outlier equation.

\begin{theorem}[Matrix-valued transfer principle]\label{thm:matrix-valued-transfer}
Assume \cref{ass:global-noise}.  Let $\Gamma$ be a deterministic contour with $\overline{\Omega_\Gamma}\Subset\C\setminus[-b,b]$, and let $\mathfrak M_n(\zeta)$ be a deterministic analytic $2r_n\times2r_n$ matrix on a neighborhood of $\overline{\Omega_\Gamma}$.  Suppose
\begin{align*}
  \sup_{\zeta\in\Gamma}\norm{\calG_n(\zeta)-\mathfrak M_n(\zeta)}_{\op}&\to0,\\
  \sup_{\zeta\in\Gamma}
  \norm{(\calG_n(\zeta)-\mathfrak M_n(\zeta))\calK_n}_{\op}&\to0
\end{align*}
in probability.  Assume
\[
  \sup_{n,\zeta\in\Gamma}
  \norm{(\Id-\mathfrak M_n(\zeta)\calK_n)^{-1}}_{\op}<\infty.
\]
Then, with probability tending to one, the eigenvalues of $\calW_n$ enclosed by $\Gamma$ have the same total multiplicity as the zeros in $\Omega_\Gamma$ of
\[
  \det(\Id-\mathfrak M_n(\zeta)\calK_n).
\]
Writing
\[
  \widehat\calP_n(\Gamma):=\frac1{2\pi\ii}\int_\Gamma(\zeta\Id-\calW_n)^{-1}\,\dd\zeta
\]
for the corresponding Riesz projector, one has
\begin{equation}\label{eq:matrix-valued-projector}
 \calE_n^*\widehat\calP_n(\Gamma)\calE_n
 -\frac1{2\pi\ii}\int_\Gamma
   (\Id-\mathfrak M_n(\zeta)\calK_n)^{-1}\mathfrak M_n(\zeta)\,\dd\zeta
 \longrightarrow0
\end{equation}
in operator norm and in probability.
\end{theorem}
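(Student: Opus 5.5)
The argument repeats the proof of \cref{prop:contour-transfer} with $\calM$ replaced by $\mathfrak M_n$; nothing in that proof used the scalar structure. It has three steps: factor the master matrix, count roots by a homotopy, and compare compressed resolvents on $\Gamma$ and integrate.

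\emph{Step 1: factorization.} Set $\calF_n^{\mathfrak M}(\zeta):=\Id-\mathfrak M_n(\zeta)\calK_n$. On $\Gamma$ write
\[
 \calF_n(\zeta)=\calF_n^{\mathfrak M}(\zeta)\Bigl[\Id-\calF_n^{\mathfrak M}(\zeta)^{-1}\bigl(\calG_n(\zeta)-\mathfrak M_n(\zeta)\bigr)\calK_n\Bigr].
\]
The assumed uniform inverse bound and the weighted error hypothesis make the bracketed correction $o(1)$ in operator norm, uniformly on $\Gamma$, with probability tending to one. The same holds along the homotopy $\mathfrak M_n+t(\calG_n-\mathfrak M_n)$, $t\in[0,1]$. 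Hence $\calF_n$ is invertible on $\Gamma$, with $\sup_\Gamma\norm{\calF_n^{-1}}_{\op}$ bounded.

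\emph{Step 2: root count.} By \cref{ass:global-noise}, $\norm{X_n}_{\op}\to b$, so with probability tending to one $\spec(\calX_n)$ avoids a fixed neighborhood of $\overline{\Omega_\Gamma}$. Then $\calG_n$ is analytic there, and so is every homotopy determinant $h_t(\zeta)=\det(\Id-[\mathfrak M_n+t(\calG_n-\mathfrak M_n)]\calK_n)$. These determinants do not vanish on $\Gamma$ and depend continuously on $t$. Hence the argument principle integral $\frac1{2\pi\ii}\int_\Gamma h_t'/h_t$ is an integer-valued continuous function of $t$, and therefore constant. At $t=1$, \eqref{eq:det-master} identifies the zeros of $h_1$ inside $\Gamma$ with the eigenvalues of $\calW_n$ there, with multiplicities, because $\det(\zeta\Id-\calX_n)$ has no zeros in $\overline{\Omega_\Gamma}$.

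\emph{Step 3: projectors.} By \eqref{eq:compressed-W-resolvent}, $\calE_n^*(\zeta\Id-\calW_n)^{-1}\calE_n=\calF_n^{-1}\calG_n$ on $\Gamma$. Split
\[
 \calF_n^{-1}\calG_n-(\calF_n^{\mathfrak M})^{-1}\mathfrak M_n=\calF_n^{-1}(\calG_n-\mathfrak M_n)\calK_n(\calF_n^{\mathfrak M})^{-1}\calG_n+(\calF_n^{\mathfrak M})^{-1}(\calG_n-\mathfrak M_n).
\]
The first term uses the weighted error and the second the unweighted error. The remaining factors are bounded: the inverses by Step 1, and $\calG_n$ because it is a compression of a resolvent at fixed distance from the spectrum. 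So the difference tends to zero uniformly on $\Gamma$. Integrating over the fixed contour, whose length is finite, gives \eqref{eq:matrix-valued-projector}.

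\emph{Main obstacle.} The only delicate point is the homotopy count in Step 2. It needs analyticity of $\mathfrak M_n$ on a neighborhood of $\overline{\Omega_\Gamma}$ and uniform invertibility along the whole path, not just at its endpoints; both are supplied above by the stated hypotheses. There is no uniformity in $n$ issue, because the argument is carried out for each fixed $n$ on a high-probability event.
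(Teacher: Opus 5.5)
Your proposal is correct and follows the paper's approach: the paper's proof simply repeats the homotopy and argument-principle count and the compressed-resolvent comparison from \cref{prop:contour-transfer}, with $\calM$ replaced by $\mathfrak M_n$. Your version writes out the same three steps in more detail, and it correctly uses only the right-weighted error $(\calG_n-\mathfrak M_n)\calK_n$, which is the weighted hypothesis this statement actually assumes.
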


\begin{proof}
Repeat the homotopy and compressed-resolvent proof of \cref{prop:contour-transfer}, with $\calM$ replaced by $\mathfrak M_n$.
\end{proof}

\begin{remark}[Scope of the scalar theory]\label{rem:scope-scalar}
When $\mathfrak M_n$ is not asymptotically scalar on the signal subspaces, spike directions interact with the anisotropy of the noise and a single map $\theta\mapsto\rho(\theta)$ is generally impossible.  \Cref{thm:matrix-valued-transfer} still gives values and projectors, but they are determined by a matrix equation.  The scalar results of this paper therefore make a precise, transparent claim: they apply whenever noise anisotropy is averaged away by Haar signal/noise orientations or is absent at the level of the anisotropic local law.
\end{remark}

\appendix

\section{Stiefel concentration and uniform compression}\label{app:stiefel}

We record the concentration estimates used in \cref{sec:random-orientation,sec:haar-noise}.  The constants below are absolute and may change from line to line.

\begin{lemma}[Sphere quadratic forms]\label{lem:sphere-quadratic}
Let $w$ be uniform on the unit sphere of $\F^N$, and let $D$ be deterministic Hermitian with $\norm D_{\op}\le M$.  Then
\begin{equation}\label{eq:sphere-quadratic-tail}
 \Pp\left(\left|w^*Dw-\frac1N\tr D\right|>t\right)
 \le 2\exp\left(-cN\min\left(\frac{t^2}{M^2},\frac tM\right)\right)
\end{equation}
for all $t>0$.  In particular the Gaussian form $2\exp(-cNt^2/M^2)$ holds for $0<t\le M$, and on any range $t\le CM$ the two exponents agree up to the constant $c$.
\end{lemma}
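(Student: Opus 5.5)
The plan is to reduce to Gaussian concentration through the standard polar representation of the uniform vector.

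Realize $w=g/\norm g_2$ with $g$ a standard real or complex Gaussian vector in $\F^N$, normalized so that $\E\norm g_2^2=N$. Then
\[
  w^*Dw-\tfrac1N\tr D=\frac{g^*Dg-\tr D}{\norm g_2^2}-\frac{\tr D}{N}\cdot\frac{\norm g_2^2-N}{\norm g_2^2}.
\]
Both terms are controlled by the same tool. Hanson--Wright for the Gaussian quadratic form $g^*Dg$ gives, after real/imaginary decomposition in the complex case,
\[
  \Pp\bigl(|g^*Dg-\tr D|>s\bigr)\le2\exp\bigl(-c\min(s^2/\norm D_{\mathrm F}^2,s/\norm D_{\op})\bigr),
\]
with $\norm D_{\mathrm F}^2\le NM^2$ and $\norm D_{\op}\le M$. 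The second term is the special case $D=\Id$, a chi-square deviation bounded by Bernstein's inequality. We may also subtract $\tfrac1N\tr D\cdot\Id$ from $D$ first: this leaves $w^*Dw-\tfrac1N\tr D$ unchanged because $\norm w=1$, and only doubles $M$. After that reduction $\tr D=0$, so the second term disappears entirely.

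With $\tr D=0$, split on the event $\{\norm g_2^2\ge N/2\}$. Its complement has probability at most $e^{-cN}$ by the chi-square lower tail. On the event itself,
\[
  |w^*Dw|\le 2|g^*Dg|/N,
\]
so it suffices to bound $\Pp(|g^*Dg|>Nt/2)$. Hanson--Wright with $s=Nt/2$ gives exactly $\exp(-cN\min(t^2/M^2,t/M))$.

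The stray $e^{-cN}$ term must be absorbed into the claimed bound. For $t\le 2M$ it is dominated, after adjusting $c$, since then $\min(t^2/M^2,t/M)\le 4$. For $t>2M$ the event is empty, because $|w^*Dw-\tfrac1N\tr D|\le 2M$ deterministically. The final remarks in the statement follow directly: for $t\le M$ the minimum equals $t^2/M^2$, and for $t\le CM$ the two exponents are comparable up to the constant $C$.

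The main obstacle is bookkeeping rather than substance: handling the complex case and the normalization constants uniformly so that $c$ stays absolute. Working with the real-linear embedding $\C^N\cong\R^{2N}$ and the real symmetric matrix representing the form handles this.
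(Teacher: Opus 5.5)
Your proposal is correct and follows essentially the same route as the paper: the Gaussian representation $w=g/\|g\|_2$, Bernstein's inequality applied to the centered numerator $\sum_j(\lambda_j-\frac1N\tr D)(|g_j|^2-1)$ (your traceless reduction, with Hanson--Wright for Gaussian $g$ being Bernstein after diagonalizing), and chi-square concentration for $\|g\|_2^2$. You also make explicit the absorption of the $e^{-cN}$ exceptional probability via the deterministic bound $|w^*Dw-\frac1N\tr D|\le 2M$, which the paper's sketch leaves implicit.
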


\begin{proof}
Write $w=g/\norm g_2$ with $g$ standard real or complex Gaussian.  After diagonalizing $D$, the centered numerator is a weighted sum of independent centered subexponential variables $|g_j|^2-1$.  Bernstein's inequality and chi-square concentration for $\norm g_2^2$ give \eqref{eq:sphere-quadratic-tail}.  See, for example,~\cite{Vershynin2018}.
\end{proof}

\begin{lemma}[Independent-sphere bilinear forms]\label{lem:sphere-bilinear}
Let $w_1$ and $w_2$ be independent uniform unit vectors in $\F^{N_1}$ and $\F^{N_2}$, and let $D\in\F^{N_1\times N_2}$ satisfy $\norm D_{\op}\le M$.  Then
\begin{equation}\label{eq:sphere-bilinear-tail}
  \Pp(|w_1^*Dw_2|>t)
  \le2\exp\left(-c\min(N_1,N_2)\frac{t^2}{M^2}\right).
\end{equation}
\end{lemma}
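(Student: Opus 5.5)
We plan to prove \eqref{eq:sphere-bilinear-tail} by conditioning on one of the two vectors and reducing to one-dimensional concentration on the sphere. By symmetry (replace $D$ by $D^*$ and swap the roles of $w_1,w_2$), assume the conditioning is done on whichever vector lives in the smaller dimension. Concretely, condition on $w_2$ and set $a:=Dw_2\in\F^{N_1}$, a vector independent of $w_1$ with $\norm{a}_2\le M$. Then $w_1^*Dw_2=\ip{w_1}{a}$ is the projection of a uniform unit vector onto a fixed direction of length at most $M$, so it suffices to show
\[
  \Pp\bigl(|\ip{w_1}{e}|>s\bigr)\le 2\exp(-cN_1s^2)
\]
for a fixed unit vector $e$, and then apply it with $s=t/\norm{a}_2\ge t/M$. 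This already gives the bound with $N_1$ in the exponent, and hence with $\min(N_1,N_2)$, after taking expectation over $w_2$.

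For the one-dimensional estimate, we would write $w_1=g/\norm{g}_2$ with $g$ standard real or complex Gaussian in $\F^{N_1}$, as in the proof of \cref{lem:sphere-quadratic}. On the event $\norm{g}_2^2\ge N_1/2$, which by chi-square concentration fails with probability at most $e^{-cN_1}$, we have $|\ip{w_1}{e}|\le \sqrt{2/N_1}\,|\ip{g}{e}|$. Here $\ip{g}{e}$ is a standard (real or complex) Gaussian, with tail $2e^{-s^2N_1/4}$ at the level $s\sqrt{N_1/2}$. Alternatively, Lévy's concentration for the $1$-Lipschitz function $w\mapsto\Re\ip{w}{e}$ (and separately the imaginary part in the complex case) on the sphere gives the same bound directly, with median zero by symmetry.

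The only point needing care is absorbing the additive $e^{-cN_1}$ term into the form $2\exp(-cN_1t^2/M^2)$. Since $|w_1^*Dw_2|\le M$ deterministically, the probability vanishes for $t>M$. For $t\le M$ we have $e^{-cN_1}\le e^{-cN_1t^2/M^2}$, so the two terms combine after shrinking $c$ and noting that the bound is trivial when $N_1t^2/M^2$ is of order one. We expect this constant bookkeeping to be the only mild obstacle; the rest is standard, as in \cite{Vershynin2018}.
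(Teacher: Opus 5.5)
Your proposal is correct and is essentially the paper's argument: condition on one vector, use that $Dw_2$ has norm at most $M$, and apply the subgaussian tail of a one-dimensional marginal of a uniform sphere vector. The paper conditions on $w_1$ instead and uses $D^*w_1$, which gives $N_2$ rather than $N_1$ in the exponent and is equally sufficient; your Gaussian-representation derivation of the marginal tail and the absorption of the $e^{-cN_1}$ term fill in details the paper leaves to the reference.
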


\begin{proof}
Condition on $w_1$.  The vector $D^*w_1$ has norm at most $M$, and its inner product with an independent uniform sphere vector has a subgaussian tail at scale $M/\sqrt{N_2}$.  Interchange the two sides if necessary.
\end{proof}

\begin{lemma}[Net bounds]\label{lem:net-bound}
A $1/4$-net $\mathcal N$ of the unit sphere in $\F^r$ may be chosen with $|\mathcal N|\le9^{2r}$.  For Hermitian $A$ and arbitrary $B$,
\begin{align*}
  \norm A_{\op}&\le2\max_{x\in\mathcal N}|x^*Ax|,\\
  \norm B_{\op}&\le2\max_{x,y\in\mathcal N}|x^*By|.
\end{align*}
\end{lemma}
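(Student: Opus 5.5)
The net statement has two parts: a volumetric cardinality bound and two net-to-norm comparisons. For the cardinality, I will identify $\F^r$ with $\R^{r}$ or $\R^{2r}$ (real dimension $\dim_\R\F^r\le 2r$) and take a maximal $1/4$-separated subset $\mathcal N$ of the unit sphere. A maximal separated set is automatically a $1/4$-net. The balls of radius $1/8$ around its points are disjoint and lie in the ball of radius $9/8$, so comparing volumes gives $|\mathcal N|\le 9^{\dim_\R\F^r}\le 9^{2r}$.

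For the bilinear bound, I fix unit $x,y$ attaining $\norm B_{\op}=|x^*By|$ and choose $x_0,y_0\in\mathcal N$ with $\norm{x-x_0}_2\le1/4$ and $\norm{y-y_0}_2\le1/4$. Writing
\[
x^*By=x_0^*By_0+(x-x_0)^*By+x_0^*B(y-y_0),
\]
I get $\norm B_{\op}\le \max_{\mathcal N}|x^*By|+\tfrac12\norm B_{\op}$, which is the claimed factor $2$ after rearranging.

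For Hermitian $A$, I use $\norm A_{\op}=\sup_{\norm x=1}|x^*Ax|$ and take $x$ attaining it, with $x_0\in\mathcal N$ and $\norm{x-x_0}\le1/4$. Then
\[
x^*Ax-x_0^*Ax_0=(x-x_0)^*Ax+x_0^*A(x-x_0),
\]
so $|x^*Ax-x_0^*Ax_0|\le 2\cdot\tfrac14\norm A_{\op}=\tfrac12\norm A_{\op}$. This gives $\norm A_{\op}\le\max_{\mathcal N}|x^*Ax|+\tfrac12\norm A_{\op}$, and again the factor $2$.

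None of these steps is a real obstacle; the only point needing care is to use the real dimension $2r$ in the complex case for the cardinality bound. In the rectangular case, where $B$ maps between spaces of different dimension, the same argument applies with nets in each space.
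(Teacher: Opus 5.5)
Your proof is correct. The paper states this lemma without proof and relies on the standard net arguments it attributes to Vershynin. Your argument is that standard one: a maximal $1/4$-separated subset of the sphere, volume comparison in real dimension at most $2r$, and the two telescoping estimates that give $\norm{A}_{\op},\norm{B}_{\op}\le\max+\tfrac12\norm{\cdot}_{\op}$, with net points taken on the sphere so that $\norm{x_0}=1$.
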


\begin{lemma}[Haar Stiefel compression]\label{lem:stiefel-compression}
Let $Q\in\St(N,r)$ be Haar and let $D\in\F^{N\times N}$ be deterministic with $\norm D_{\op}\le M$.  Then, for every $u\ge0$,
\begin{equation}\label{eq:stiefel-compression-tail}
 \Pp\left(
   \norm{Q^*DQ-\frac1N(\tr D)\Id_r}_{\op}
   >CM\sqrt{\frac rN}+u
 \right)
 \le C\exp\left(-cN\frac{u^2}{M^2}\right).
\end{equation}
The same conclusion holds for non-Hermitian $D$ over $\F=\C$ after changing the constants.  In the real models used here with complex spectral parameter, it also applies to complex symmetric $D$ by treating its real and imaginary symmetric parts separately.
\end{lemma}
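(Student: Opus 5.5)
We reduce \cref{lem:stiefel-compression} to a net argument combined with the sphere estimates of \cref{lem:sphere-quadratic,lem:net-bound}.

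First we treat Hermitian $D$ and reduce to the centered matrix $D_0:=D-\frac1N(\tr D)\Id_N$. Since $Q^*Q=\Id_r$, we have $Q^*D_0Q=Q^*DQ-\frac1N(\tr D)\Id_r$. Moreover $\tr D_0=0$ and $\norm{D_0}_{\op}\le2M$, so it suffices to bound $\norm{Q^*D_0Q}_{\op}$. Fix a $1/4$-net $\mathcal N$ of the unit sphere in $\F^r$ with $|\mathcal N|\le9^{2r}$. By \cref{lem:net-bound},
\[
\norm{Q^*D_0Q}_{\op}\le2\max_{x\in\mathcal N}|x^*Q^*D_0Qx|.
\]
For each fixed unit $x$, right invariance of Haar measure on $\St(N,r)$ shows that $w:=Qx$ is uniformly distributed on the unit sphere of $\F^N$. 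Hence \cref{lem:sphere-quadratic}, applied with $\frac1N\tr D_0=0$, gives
\[
\Pp\bigl(|w^*D_0w|>s\bigr)\le2\exp\bigl(-cN\min(s^2/M^2,s/M)\bigr).
\]

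Next comes the union bound. We take $s=C_0M\sqrt{r/N}+u/2$. The range $u\gg M$ needs no work: since $\norm{Q^*D_0Q}_{\op}\le2M$ deterministically, the event in \eqref{eq:stiefel-compression-tail} is empty once $u>2M$. So we may assume $s\le CM$, where the two exponents agree up to constants and the tail becomes $2\exp(-c'Ns^2/M^2)$. Since $Ns^2/M^2\ge C_0^2r+Nu^2/(4M^2)$, choosing $C_0$ large makes $c'C_0^2r$ exceed $2r\log9+r$. The factor $9^{2r}$ is then absorbed, and the failure probability is at most $C\exp(-cNu^2/M^2)$. The net factor $2$ is absorbed into $C$. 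This proves \eqref{eq:stiefel-compression-tail} for Hermitian $D$.

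For non-Hermitian $D$ over $\C$, we write $D=D_1+\ii D_2$ with $D_1,D_2$ Hermitian and $\norm{D_j}_{\op}\le M$. We apply the Hermitian bound to each part and use the triangle inequality, at the cost of doubling the constants. For complex symmetric $D$ in the real models, $Q$ is real, so $D=D_1+\ii D_2$ with $D_1,D_2$ real symmetric, and $Q^*DQ=Q^*D_1Q+\ii Q^*D_2Q$. The real Hermitian case then applies to each part.

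The only delicate point is making the Gaussian regime of \cref{lem:sphere-quadratic} dominate the net cardinality $e^{O(r)}$. That is the job of the additive $CM\sqrt{r/N}$ term, together with truncating $u$ at order $M$.
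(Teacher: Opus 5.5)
Your argument is correct and is essentially the paper's proof with the details written out: a $1/4$-net in $\F^r$, uniformity of $Qx$ on the sphere, the Gaussian regime of the sphere quadratic-form tail, a union bound after disposing of $u>2M$ trivially, and the Hermitian/skew-Hermitian (respectively real/imaginary) splitting. Two small points. First, uniformity of $Qx$ comes from left invariance under $\mathrm O(N)$ or $\mathrm U(N)$; right invariance only shows that the law of $Qx$ does not depend on $x$. Second, since $s\le(C_0+1)M$, the Gaussian-regime constant is $c'\approx c/(C_0+1)$, so what you actually need is $c'C_0^2=cC_0^2/(C_0+1)\ge cC_0/2$ large, which still holds for large $C_0$.
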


\begin{proof}
For Hermitian $D$, fix a $1/4$-net $\mathcal N$.  For each $x\in\mathcal N$, $Qx$ is uniform on the unit sphere, so \cref{lem:sphere-quadratic} applies.  A union bound over $|\mathcal N|$ and \cref{lem:net-bound} give the stated threshold for the nontrivial range of $u$; after increasing $C$, the event is empty for larger $u$ because both compression terms have norm at most $M$.  Over $\F=\C$, decompose a general $D$ into its Hermitian and skew-Hermitian parts.  In the real complex-symmetric case, apply the real Hermitian estimate separately to $\operatorname{Re}D$ and $\operatorname{Im}D$.
\end{proof}

\begin{lemma}[Mixed Haar Stiefel compression]\label{lem:stiefel-mixed}
Let $Q_1\in\St(N_1,r)$ and $Q_2\in\St(N_2,r)$ be independent Haar matrices, and let $D\in\F^{N_1\times N_2}$ satisfy $\norm D_{\op}\le M$.  Then
\begin{equation}\label{eq:stiefel-mixed-tail}
 \Pp\left(
   \norm{Q_1^*DQ_2}_{\op}
   >CM\sqrt{\frac r{\min(N_1,N_2)}}+u
 \right)
 \le C\exp\left(-c\min(N_1,N_2)\frac{u^2}{M^2}\right).
\end{equation}
In the real model with a complex matrix $D$, the same conclusion holds after changing constants, by applying the estimate separately to the real and imaginary parts.
\end{lemma}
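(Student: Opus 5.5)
The plan is the same net-and-union-bound scheme used for \cref{lem:stiefel-compression}, combined with the sphere bilinear estimate of \cref{lem:sphere-bilinear}. By \cref{lem:net-bound}, fix a $1/4$-net $\mathcal N$ of the unit sphere in $\F^r$ with $|\mathcal N|\le 9^{2r}$. Then
\[
  \norm{Q_1^*DQ_2}_{\op}\le 2\max_{x,y\in\mathcal N}|x^*Q_1^*DQ_2y|.
\]
It therefore suffices to control each bilinear form $(Q_1x)^*D(Q_2y)$ and then take a union bound over at most $9^{4r}$ pairs.

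For fixed unit $x,y$, Haar invariance makes $w_1:=Q_1x$ uniform on the unit sphere of $\F^{N_1}$ and $w_2:=Q_2y$ uniform on the unit sphere of $\F^{N_2}$. Independence of $Q_1$ and $Q_2$ makes $w_1$ and $w_2$ independent. \Cref{lem:sphere-bilinear} then gives
\[
  \Pp(|w_1^*Dw_2|>s)\le 2\exp\bigl(-c\,N_{\min}s^2/M^2\bigr),\qquad N_{\min}:=\min(N_1,N_2).
\]
Take $s=\tfrac12\bigl(CM\sqrt{r/N_{\min}}+u\bigr)$. Since $(a+b)^2\ge a^2+b^2$, the exponent is at least $c'C^2r+c'N_{\min}u^2/M^2$. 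Choosing $C$ large enough that $c'C^2r\ge 4r\log 9+r$, the union bound yields a failure probability of at most $2\exp(-c'N_{\min}u^2/M^2)$. This is \eqref{eq:stiefel-mixed-tail}. No restriction on the range of $u$ is needed, because the bilinear tail is purely subgaussian; alternatively one may note that the event is empty once $u>M$, since $\norm{Q_1^*DQ_2}_{\op}\le M$.

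The one point needing care is the real model with complex $D$, which is the case arising from a complex spectral parameter. Write $D=\operatorname{Re}D+\ii\operatorname{Im}D$, with each part real of operator norm at most $M$. Apply the real estimate to each part at threshold halved, and combine by the triangle inequality; this changes only the constants. Over $\F=\C$ no splitting is needed, since \cref{lem:sphere-bilinear} is stated for complex spheres.

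The main obstacle is checking that the subgaussian scale in \cref{lem:sphere-bilinear} is $M/\sqrt{N_{\min}}$ and not something worse. Conditional on $w_1$, the vector $D^*w_1$ has norm at most $M$, and a uniform vector on the sphere of $\F^{N_2}$ has marginals that are subgaussian at scale $1/\sqrt{N_2}$. This already gives scale $M/\sqrt{N_2}$, which is at most $M/\sqrt{N_{\min}}$. Granting that lemma, everything else is bookkeeping with the net cardinality.
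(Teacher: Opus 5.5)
Your proposal is correct and follows essentially the same route as the paper's proof. You reduce to a $1/4$-net via \cref{lem:net-bound}, use Haar invariance and independence to make $Q_1x$ and $Q_2y$ independent uniform sphere vectors, apply \cref{lem:sphere-bilinear} with a union bound over the $9^{4r}$ net pairs, and split $D$ into real and imaginary parts in the real model; your threshold choice $s=\tfrac12\bigl(CM\sqrt{r/N_{\min}}+u\bigr)$, with the net cardinality absorbed into $C$, supplies the bookkeeping the paper leaves implicit.
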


\begin{proof}
For fixed net points $x,y$, the vectors $Q_1x$ and $Q_2y$ are independent uniform sphere vectors.  Apply \cref{lem:sphere-bilinear}, take a union bound over the net pairs, and use \cref{lem:net-bound}.  For complex $D$ in the real model, write $D=\operatorname{Re}D+\ii\operatorname{Im}D$ and apply the real estimate to the two parts.
\end{proof}

\begin{lemma}[Uniformity in the spectral parameter]\label{lem:z-uniformity}
Let $K\subset\C$ be compact, let $1\le r\le N$, let $M>0$, and let $Z(\zeta)$ be a nonnegative random process whose sample paths are $L$-Lipschitz on $K$.  Assume that $L/M\le N^{A_0}$ for some fixed $A_0<\infty$ (in the applications below, $L$ and $M$ are bounded on $K$).  Suppose that for every fixed $\zeta\in K$ and every $u\ge0$,
\[
  \Pp\left(Z(\zeta)>A M\sqrt{\frac rN}+u\right)
  \le C\exp\left(-cN\frac{u^2}{M^2}\right),
\]
where $A,C,c$ are independent of $N,r,\zeta$.  Then
\[
  \sup_{\zeta\in K}Z(\zeta)
  =O_{\Pp}\left(
      M\sqrt{\frac rN}
      +M\sqrt{\frac{\log N}{N}}
    \right).
\]
In applications, $Z(\zeta)$ is a centered diagonal or mixed compression error; the pathwise Lipschitz bound follows from the operator-norm Lipschitz continuity of the underlying matrix family.
\end{lemma}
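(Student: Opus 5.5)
The plan is a standard chaining/net argument in the spectral parameter. Set $\epsilon_N:=M\sqrt{r/N}+M\sqrt{\log N/N}$. We need to show that $\Pp(\sup_K Z>C_1\epsilon_N)\to0$ for a suitable fixed $C_1$. Because the exponent in the tail bound scales with $M$, it is convenient to normalize $M=1$ by considering $Z/M$, whose Lipschitz constant is $L/M\le N^{A_0}$.

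\textbf{Step 1: choose a mesh.} Take a mesh $\mathcal K_N\subset K$ with spacing $\delta_N:=N^{-A_0-1}$. Since $K$ is a compact subset of $\C\cong\R^2$, we can arrange $|\mathcal K_N|\le C_K\delta_N^{-2}=C_KN^{2A_0+2}$. For each $\zeta\in K$, choose a nearest mesh point $\zeta_0$. The pathwise Lipschitz bound then gives
\[
Z(\zeta)\le Z(\zeta_0)+L\delta_N\le Z(\zeta_0)+M N^{-1}.
\]
The error $MN^{-1}$ is negligible compared with $M\sqrt{\log N/N}$.

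\textbf{Step 2: union bound over the mesh.} At each mesh point, apply the fixed-$\zeta$ tail bound with $u=BM\sqrt{\log N/N}$. This gives failure probability at most $C\exp(-cB^2\log N)=CN^{-cB^2}$. Summing over the mesh, the total failure probability is at most
\[
C_KN^{2A_0+2-cB^2}.
\]
Choosing $B$ with $cB^2>2A_0+3$ makes this $o(1)$.

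\textbf{Step 3: conclude.} On the complement of the failure event, every $\zeta\in K$ satisfies
\[
Z(\zeta)\le AM\sqrt{r/N}+BM\sqrt{\log N/N}+MN^{-1}.
\]
This is the claimed $O_{\Pp}$ bound. In fact it shows slightly more: the estimate holds with probability $1-N^{-1}$ for fixed constants.

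\textbf{Main point of care.} The only delicate step is that the mesh cardinality must be polynomial in $N$ so that a $\log N$ price absorbs it. This is exactly where the assumption $L/M\le N^{A_0}$ is used: it keeps the spacing polynomially small while the discretization error stays below $M/N$. No chaining finer than this single scale is needed, since the target already includes the $\sqrt{\log N/N}$ term.
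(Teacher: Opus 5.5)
Your proposal is correct and follows essentially the same argument as the paper. Both use a two-dimensional mesh whose cardinality is polynomial in $N$ because $L/M\le N^{A_0}$, a union bound at $u\asymp M\sqrt{\log N/N}$, and the pathwise Lipschitz bound to fill the gaps; the only difference is that the paper takes spacing proportional to $M\sqrt{\log N/N}/L$, whereas you take the fixed spacing $N^{-A_0-1}$.
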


\begin{proof}
If $L=0$, there is nothing to prove.  Otherwise, use a two-dimensional mesh of spacing proportional to $M\sqrt{\log N/N}/L$.  The polynomial bound on $L/M$ makes its cardinality polynomial in $N$.  Taking $u=C_0M\sqrt{\log N/N}$ with $C_0$ large, a union bound controls all mesh points with probability tending to one, and the pathwise Lipschitz bound fills the gaps.
\end{proof}

\section{Auxiliary analytic facts}\label{app:analytic}

\begin{lemma}[Monotonicity of the rectangular transform]\label{lem:d-monotone}
For $z>b$,
\[
  -\phi'(z)\ge z^{-2},\qquad
  -\widetilde\phi'(z)\ge z^{-2},\qquad
  -d'(z)\ge2z^{-3}.
\]
In particular, $d$ is strictly decreasing.
\end{lemma}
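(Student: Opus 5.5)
The plan is to differentiate under the integral sign in \eqref{eq:phi-def} and bound each term separately.  For real $z>b$ and $x\in\supp\mu\subset[0,b^2]$ we have $z^2-x>0$, and
\[
 \frac{\dd}{\dd z}\,\frac{z}{z^2-x}=\frac{(z^2-x)-2z^2}{(z^2-x)^2}=-\frac{z^2+x}{(z^2-x)^2}.
\]
Differentiation under the integral is justified by dominated convergence, since the integrand and its derivative are uniformly bounded on compact subsets of $(b,\infty)$.  For $0\le x<z^2$ we have $z^2+x\ge z^2-x>0$ and $(z^2-x)^2\le z^4$.  Hence
\[
 \frac{z^2+x}{(z^2-x)^2}\ge\frac{1}{z^2-x}\ge\frac1{z^2},
\]
and integrating against the probability measure $\mu$ gives $-\phi'(z)\ge z^{-2}$.

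For $\widetilde\phi=c\phi+(1-c)/z$ we get
\[
 -\widetilde\phi'(z)=-c\phi'(z)+(1-c)z^{-2}\ge cz^{-2}+(1-c)z^{-2}=z^{-2},
\]
using $c\in(0,1]$.

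For $d=\phi\widetilde\phi$ we have $-d'=(-\phi')\widetilde\phi+\phi(-\widetilde\phi')$, so we need lower bounds on $\phi$ and $\widetilde\phi$ themselves.  Since $z^2-x\le z^2$, we get $\phi(z)=\int z/(z^2-x)\,\mu(\dd x)\ge 1/z$.  Then $\widetilde\phi(z)\ge c/z+(1-c)/z=1/z$.  Combining these gives
\[
 -d'(z)\ge z^{-2}\cdot z^{-1}+z^{-1}\cdot z^{-2}=2z^{-3}>0,
\]
so $d$ is strictly decreasing on $(b,\infty)$.

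There is no real obstacle here.  The only points needing care are the sign of the derivative and checking that the elementary lower bound $\phi\ge1/z$ survives the mixture with the atom at zero in $\widetilde\phi$.
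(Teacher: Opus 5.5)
Your proposal is correct and follows the paper's proof essentially line for line. You use the same pointwise bound $\frac{z^2+x}{(z^2-x)^2}\ge z^{-2}$ integrated against $\mu$, the same identity $-\widetilde\phi'=c(-\phi')+(1-c)z^{-2}$ with $c\le1$, and the product rule combined with $\phi,\widetilde\phi\ge z^{-1}$; you also add the routine justification for differentiating under the integral.
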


\begin{proof}
For $x\ge0$ and $z>\sqrt x$,
\[
  \frac{\partial}{\partial z}\frac{z}{z^2-x}
  =-\frac{z^2+x}{(z^2-x)^2}\le-z^{-2}.
\]
Integrate this inequality against $\mu$ to get $-\phi'(z)\ge z^{-2}$.  For the second bound, differentiate $\widetilde\phi=c\phi+(1-c)/\zeta$ to get $-\widetilde\phi'(z)=c(-\phi'(z))+(1-c)z^{-2}\ge z^{-2}$, using $c\le1$.  Since $\phi(z),\widetilde\phi(z)\ge z^{-1}$, the product rule gives the last bound.
\end{proof}

\begin{lemma}[Continuity and boundedness of overlap weights]\label{lem:weight-continuity}
The functions $\cL,\cR,\cLR$ are continuous on every compact subset of $(\theta_\star,\infty)$ and are bounded there.  All three take values in $[0,1]$.
\end{lemma}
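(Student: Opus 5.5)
The plan is to express the three weights in terms of $\rho=\rho(\theta)$ alone, using that $\rho$ is a continuous, strictly increasing bijection from $(\theta_\star,\infty)$ onto $(b,\infty)$ by \cref{ass:edge} and the monotonicity in \cref{lem:d-monotone}. On $(b,\infty)$ the functions $\phi,\widetilde\phi,d,d'$ are real-analytic, because they are Stieltjes-type integrals of a compactly supported measure evaluated away from its support. Moreover $d'(\rho)\le-2\rho^{-3}<0$ by \cref{lem:d-monotone}. Composing with the continuous map $\theta\mapsto\rho(\theta)$ then shows that $\cL,\cR,\cLR$, as defined in \eqref{eq:overlap-weights}, are continuous on $(\theta_\star,\infty)$. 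Continuous functions on a compact set are bounded, so boundedness on compacts is immediate. Nonnegativity is also immediate: $\phi(\rho),\widetilde\phi(\rho)>0$, $d'(\rho)<0$, and $\theta>0$. By convention all three weights vanish at or below $\theta_\star$.

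The substantive claim is $\cL,\cR\le1$; then $\cLR=\sqrt{\cL\cR}\le1$ follows from \eqref{eq:cross-weight}. I would rewrite the weights using $\theta^{-2}=d(\rho)=\phi\widetilde\phi$:
\[
  \cL=\frac{-2\phi\,d}{d'}=\frac{-2\phi^2\widetilde\phi}{\phi'\widetilde\phi+\phi\widetilde\phi'},
  \qquad
  \cR=\frac{-2\phi\widetilde\phi^2}{\phi'\widetilde\phi+\phi\widetilde\phi'},
\]
with everything evaluated at $\rho$. So $\cL\le1$ is equivalent to $2\phi^2\widetilde\phi\le(-\phi')\widetilde\phi+\phi(-\widetilde\phi')$. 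The key input is a Cauchy--Schwarz inequality: $-\phi'(z)=\int\frac{z^2+x}{(z^2-x)^2}\,\mu(\dd x)\ge\int\frac{z^2}{(z^2-x)^2}\,\mu(\dd x)\ge\phi(z)^2$. The same argument applied to the measure $c\mu+(1-c)\delta_0$ gives $-\widetilde\phi'\ge\widetilde\phi^2$. These two bounds yield $(-\phi')\widetilde\phi+\phi(-\widetilde\phi')\ge\phi^2\widetilde\phi+\phi\widetilde\phi^2$.

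Comparing with what is needed, $\cL\le1$ would require $\phi^2\widetilde\phi+\phi\widetilde\phi^2\ge2\phi^2\widetilde\phi$, i.e.\ $\widetilde\phi\ge\phi$, and $\cR\le1$ would similarly require $\phi\ge\widetilde\phi$. These cannot both hold in general, so the crude Cauchy--Schwarz bound is insufficient and this is the main obstacle. Instead I would keep the dropped term $\int\frac{x}{(z^2-x)^2}\,\mu(\dd x)$. Writing $\phi'=-\phi/z+(\text{stuff})$ in the form $-\phi'=2z\int\frac{z^2}{(z^2-x)^2}\,\mu(\dd x)\cdot z^{-1}-\phi/z$, we get $-\phi'=2A/z-\phi/z$ with $A:=\int\frac{z^2}{(z^2-x)^2}\,\mu(\dd x)\ge\phi^2$, and analogously $-\widetilde\phi'=2\widetilde A/z-\widetilde\phi/z$. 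The target inequality then becomes
\[
  (2A-\phi)\widetilde\phi+\phi(2\widetilde A-\widetilde\phi)\ge2z\phi^2\widetilde\phi .
\]
A cleaner route, which I would try first, is the dilation already used in \cref{lem:off-axis-scalar,lem:windows-stable}. There the residue matrix \eqref{eq:residue-block} is $E^*\Pi E$, where $\Pi$ is the spectral projector of the self-adjoint operator $A_\theta$ at the eigenvalue $\rho$, and $E$ has orthonormal columns $e_{\mathrm L},e_{\mathrm R}$. Hence $0\le\frac12\cL=\langle e_{\mathrm L},\Pi e_{\mathrm L}\rangle\le1$. To get the sharp bound $\cL\le1$ rather than $\cL\le2$, I would use the symmetry $\zeta\mapsto-\zeta$: the projectors at $\rho$ and $-\rho$ are orthogonal, and under the unitary $e_{\mathrm L}\oplus e_{\mathrm R}\mapsto e_{\mathrm L}\oplus(-e_{\mathrm R})$ they have equal diagonal compressions. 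Therefore $2\cdot\frac12\cL=\langle e_{\mathrm L},(\Pi_\rho+\Pi_{-\rho})e_{\mathrm L}\rangle\le1$, and likewise for $\cR$. Verifying this symmetry of the operator model, namely that $A_{\mathrm L}$ and $A_{\mathrm R}$ can be taken odd-symmetric, is the point I expect to require care.
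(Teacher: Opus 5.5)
Your proof is correct. For the substantive inequality $\cL,\cR\le1$ it takes a genuinely different route from the paper; continuity, boundedness and nonnegativity are handled the same way.

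\textbf{The paper's route.} The paper stays at the level of the measure. It sets $q=z\phi(z)\ge1$, $s=z^4\int(z^2-x)^{-2}\mu(\dd x)\ge q^2$ (Cauchy--Schwarz) and $r=z\widetilde\phi(z)=1-c+cq$. It then computes $D:=-z^3d'(z)=(2s-q)(1-c+2cq)+(1-c)q$ and rewrites $\cL=2q^2r/D$, $\cR=2qr^2/D$ at a root. Finally it checks $D-2q^2r\ge2cq^2(q-1)\ge0$ and $D-2qr^2\ge2q(q-1)[1+c(2-c)(q-1)]\ge0$.

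That is exactly the completion of the ``keep the dropped term'' route you abandoned. Two small remarks on that attempt:
\begin{itemize}
\item There is a slip: $-\phi'=2A-\phi/z$, not $2A/z-\phi/z$.
\item The crude bound you discarded already gives $\cR\le1$, since $z\phi\ge1$ forces $\widetilde\phi=c\phi+(1-c)/z\le\phi$.
\end{itemize}

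\textbf{Your route.} You read $\tfrac12\cL$ and $\tfrac12\cR$ as diagonal entries of $E^*\Pi_\rho E$ and recover the missing factor two from the mirror eigenvalue. The symmetry you flag needs no construction on the operator model. Since $\phi,\widetilde\phi$ are odd and $d$ is even, $T_\theta(-\zeta)=-ST_\theta(\zeta)S$ with $S=\diag(1,-1)$. Hence the residue at $-\rho$, namely $E^*\Pi_{-\rho}E$, equals $S(E^*\Pi_\rho E)S$ and has the same diagonal. The reflection you had in mind also exists, because the representing measures of $\phi$ and $\widetilde\phi$ are symmetrized singular-value laws. Therefore
\begin{itemize}
\item $\cL=\ip{e_{\mathrm L}}{(\Pi_\rho+\Pi_{-\rho})e_{\mathrm L}}\le1$, and likewise for $\cR$;
\item $\cLR=\ip{e_{\mathrm L}}{(\Pi_\rho-\Pi_{-\rho})e_{\mathrm R}}\le1$ directly, without invoking \eqref{eq:cross-weight}.
\end{itemize}
There is no circularity: \cref{lem:off-axis-scalar,lem:windows-stable} rely only on \cref{lem:d-monotone}.

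\textbf{What each buys.} Your argument is computation-free and explains why the bound is exactly $1$. It is the limiting counterpart of the finite-$n$ fact that the eigenvectors of $\calW_n$ at $\pm\sigma_j$ are $(\widehat u_{n,j},\pm\widehat v_{n,j})/\sqrt2$, each carrying half of the left overlap $\norm{U_n^*\widehat u_{n,j}}_2^2\le1$. The paper's argument is more elementary, using only $\mu$ and Cauchy--Schwarz and no spectral theorem. It also gives explicit slack, e.g.\ $1-\cL\ge2cq^2(q-1)/D$, hence strict inequality whenever $q>1$, i.e.\ $\mu\neq\delta_0$.
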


\begin{proof}
Continuity and boundedness follow directly from \eqref{eq:overlap-weights}, the smoothness of $d$ on $(b,\infty)$, and \cref{lem:d-monotone}.  We prove the range directly.  Fix $z>b$ and set
\[
 m_0:=\int\frac1{z^2-x}\,\mu(\dd x),\qquad
 m_1:=\int\frac1{(z^2-x)^2}\,\mu(\dd x),
\]
\[
 q:=z^2m_0=z\phi(z),\qquad s:=z^4m_1,
 \qquad r:=z\widetilde\phi(z)=1-c+cq.
\]
Then $q\ge1$ and, by Cauchy--Schwarz, $s\ge q^2$.  Since
\[
 -z^2\phi'(z)=2s-q,
 \qquad
 \widetilde\phi'(z)=c\phi'(z)-\frac{1-c}{z^2},
\]
a direct calculation gives
\[
 D:=-z^3d'(z)
   =(2s-q)(1-c+2cq)+(1-c)q>0.
\]
At a supercritical root $z=\rho(\theta)$, the relation $d(z)=\theta^{-2}$ rewrites the two diagonal weights as
\[
 \cL(\theta)=\frac{2q^2r}{D},
 \qquad
 \cR(\theta)=\frac{2qr^2}{D}.
\]
Using $s\ge q^2$ and $q\ge1$,
\begin{align*}
 D-2q^2r
 &\ge 2cq^2(q-1)\ge0,\\
 D-2qr^2
 &\ge 2q(q-1)\bigl[1+c(2-c)(q-1)\bigr]\ge0.
\end{align*}
Thus $0\le\cL,\cR\le1$.  Finally, \eqref{eq:cross-weight} gives
$0\le\cLR=\sqrt{\cL\cR}\le1$.
\end{proof}

\begingroup
\renewcommand{\addcontentsline}[3]{}

\section*{Funding}
The research presented in this paper was supported by the European Research Council (ERC) under the European Union's Horizon Europe research and innovation program (grant agreement No.~101041711), by the Simons Foundation through the Collaboration on the Mathematical and Scientific Foundations of Deep Learning, by Heights Labs, by Convex Nexus Capital, and by the Israel Science Foundation (grants 2258/19 and 4101/25).

\section*{Conflicts of interest}
Y.~Shmalo holds equity interests in Heights Labs and Convex Nexus Capital.

\section*{Declaration of generative AI use}
Generative-AI tools were used for language editing, organizational assistance, and checking algebraic presentation during manuscript preparation.  The author independently reviewed the mathematical statements, proofs, and references and takes full responsibility for the content of the paper.

\endgroup

\end{document}